\newcommand{\R}{\mathbb R}
\newcommand{\E}{E}
\newcommand{\K}{\mathcal K}
\newcommand{\W}{\mathcal W}
\newcommand{\Y}{\mathcal Y}
\newcommand{\Dr}{\mathbb A_{\mathrm{ref}}}
\newcommand{\Da}{\mathbb A_{\mathrm{amb}}}
\newcommand{\acts}{\curvearrowright}
\numberwithin{equation}{section}
\theoremstyle{plain}
\newtheorem{theorem}{Theorem}[section]
\newtheorem*{mainthm*}{Main Theorem}
\newtheorem{lem}[theorem]{Lemma}
\newtheorem{prop}[theorem]{Proposition}
\newtheorem{thm}[theorem]{Theorem}
\newtheorem{cor}[theorem]{Corollary}
\theoremstyle{definition}
\newtheorem{defn}[theorem]{Definition}
\newtheorem{remark}[theorem]{Remark}
\newtheorem{exa}[theorem]{Example}
\begin{document}

\title{Dihedral twists in the Twist Conjecture}

\author[P.~Przytycki]{Piotr Przytycki$^{\dag}$}
\address{805 Sherbrooke Street West,\\Math. \& Stats.\\
                    McGill University \\
                    Montreal, Quebec, Canada H3A 0B9}
\email{piotr.przytycki@mcgill.ca}
\thanks{$\dag$ This work was partially supported by
the grant 346300 for IMPAN from the Simons Foundation and the matching 2015--2019 Polish MNiSW fund, as well as AMS, NSERC and National Science Centre, Poland UMO-2018/30/M/ST1/00668.}

\maketitle

\begin{abstract}
\noindent Under the assumption that a defining graph of a Coxeter group admits only subsequent elementary twists in $\mathbb{Z}_2$ or dihedral groups and is of type $\mathrm{FC}$, we
prove Bernhard M\"uhlherr's Twist Conjecture.
\end{abstract}

\section{Introduction}
We make progress towards verifying Bernhard M\"uhlherr's Twist Conjecture. This conjecture predicts that angle-compatible Coxeter generating sets differ by a sequence of elementary twists. By \cite{HM} and \cite{MM} the Twist Conjecture solves the Isomorphism Problem for Coxeter groups.

\begin{mainthm*}
Let $S$ be a Coxeter generating set of type $\mathrm{FC}$ angle-compatible with a Coxeter generating set $S'$. Suppose that any Coxeter generating set twist equivalent to $S$ admits only elementary twists in~$\mathbb{Z}_2$ or the dihedral groups. Then $S$ is twist equivalent to~$S'$.
\end{mainthm*}

Note that in the case where $S$ does not admit any elementary twist, we proved the Twist Conjecture in \cite{CP}. The bookkeeping in the proof was much simpler assuming $S$ is of FC type, but we managed to remove that assumption in the last section of \cite{CP}. In \cite{HP} we kept that assumption and we confirmed the Twist Conjecture in the case where we allow elementary twists but require they are all in~$\mathbb{Z}_2$. In the current article we follow this strategy amounting to allowing gradually elementary twists in larger groups. For more historical background, see our previous paper~\cite{HP}. Note that we will be invoking some basic lemmas from~\cite{HP}, but not its Main Theorem.

\textbf{Definitions.} A \emph{Coxeter generating set} $S$ of a group $W$ is a set such that $(W,S)$ is a Coxeter system. This means that $S$ generates $W$ subject only to relations of the form $s^2=1$ for $s\in S$ and $(st)^{m_{st}}=1$, where $m_{st}=m_{ts}\geq 2$ for $s\neq t\in S$ (possibly there is no relation between $s$ and $t$, and then we put by convention $m_{st}=\infty$).
An
\emph{$S$-reflection} (or a \emph{reflection}, if the dependence
on $S$ does not need to be emphasised) is an
element of $W$ conjugate to some element of $S$.
We say that $S$ is \emph{reflection-compatible} with another Coxeter generating set~$S'$ if every $S$-reflection is an $S'$-reflection. Furthermore, $S$ is \emph{angle-compatible} with $S'$ if for every $s, t \in S$ with $\langle s,t \rangle$ finite, the set $\{s,t\}$ is conjugate to some $\{s', t'\} \subset S'$.

We call a subset $J\subseteq S$ \emph{spherical} if $\langle J\rangle$ is finite. If $J$ is
spherical, let $w_J$ denote the longest element of~$\langle J\rangle$. We say
that two elements $s\neq t\in S$ are \emph{adjacent} if $\{s,t\}$ is
spherical. This gives rise to a graph whose vertices are the elements of $S$ and whose edges (labelled by $m_{st}$) correspond to adjacent pairs in $S$. This graph is called the \emph{defining graph} of $S$.
Occasionally, when all $m_{st}$ are finite, we will use another graph, whose vertices are still the elements of $S$, but (labelled) edges correspond to pairs of non-commuting elements of~$S$. This graph is called the \emph{Coxeter--Dynkin diagram} of $S$. Whenever we talk about adjacency of elements of $S$, we always mean adjacency in the defining graph unless otherwise specified.

Given a subset $J \subseteq S$, we denote by $J^\bot$ the
set of those elements of~$S\setminus J$ that commute with $J$. A
subset $J\subseteq S$ is \emph{irreducible} if it is not contained in
$K\cup K^\bot$ for some non-empty proper subset $K\subset J$.

\textbf{Type FC.} We say that $S$ is of \emph{type $\mathrm{FC}$} if each $J\subseteq S$ consisting of pairwise adjacent elements is spherical.

\textbf{Elemenary twists.} Let $T \subseteq S$. We say that  $C\subseteq T$ is a \emph{component of $T$},
if the subgraph induced on $C$ in the defining graph of $S$ is a connected component of the subgraph induced on $T$.
Let $J\subseteq S$ be an irreducible spherical subset. Assume that we have a nontrivial partition
$S\setminus (J\cup J^\bot)=A\sqcup B$, where each component $C$ of $S\setminus (J\cup J^\bot)$ is contained entirely in $A$ or in $B$. In other words, for all $a \in A$ and $b \in
B$, we have that $a$ and $b$ are non-adjacent. We then say that $J$ \emph{weakly separates} $S$.
The map $\tau \colon S \to W$ defined by
$$
\tau(s)= \left\{\begin{array}{ll}
s & \text{for } s \in A\cup J\cup J^\perp,\\
w_J s w_J^{-1} & \text{for } s \in B,
\end{array}\right.
$$
is called an \emph{elementary twist in $\langle J \rangle$} (see \cite[Def 4.4]{BMMN}).
Coxeter generating sets $S$ and $S'$ of $W$ are \emph{twist equivalent} if $S'$ can be obtained from $S$ by a finite sequence of elementary twists and a conjugation. We say that $S$ is \emph{$k$-rigid} if for each weakly separating $J\subset S$ we have $|J|<k$. Thus the assumption in the Main Theorem amounts to all Coxeter generating sets twist equivalent to $S$ being $3$-rigid. Note that being of type $\mathrm{FC}$ is invariant under elementary twists.

\textbf{Proof outline.} Let $\Da$ be the Davis complex for $(W,S')$, and for each reflection $r\in W$, let $\W_r$ be its wall in $\Da$. In Section~\ref{sec:prem}, following \cite{CM}, we explain that
to prove that $S$ (which might differ from the original $S$ by elementary twists) is conjugate to $S'$, we must find a `geometric' set of halfspaces for $s$ with $s\in S$. To this end, we will use `markings', introduced in \cite{CP} and discussed in Section~\ref{subsec:bases and markings}. These are triples $\mu=((s,w),m)$ with $w=j_1\cdots j_n$ where $j_i, s,m\in S$ satisfy certain conditions guaranteeing in particular $\mathcal W_s\cap w\mathcal W_m=\emptyset$. This determines a halfspace $\Phi^\mu_s$ for $s$ containing $w\mathcal W_m$. As in \cite{CP}, to prove that the set of these halfspaces is geometric, it suffices to prove that $\Phi^\mu_s$ depends only on~$s$.

Until the last section, our goal becomes to prove the following `consistency' of irreducible spherical $\{s,t\}\subset S$. Consistency means that all $\Phi^\mu_s$ with $j_1=t$ are equal, all $\Phi^\mu_t$ with $j_1=s$ are equal, and these two halfspaces form a geometric pair. To this end, we introduce the `complexity' $(\mathcal K_1(S),\mathcal K_2(S))$ of $S$ with respect to $S'$. The first entry $\mathcal K_1(S)$ is the sum of the distances in $\Da^{(1)}$ between all the pairs of residues~$C_L$ fixed by maximal spherical $L\subset S$. The second entry $\mathcal K_2(S)$ is the sum of the distances between more subtle objects. Namely, for maximal spherical $L\subset S$ let $D_L\subseteq C_L$ consist of chambers adjacent to each $\mathcal W_l$ with $l\in L$. The contribution to $\mathcal K_2(S)$ of a pair $L,I$ of maximal spherical subsets of $S$ is the distance between particular $\E_{L,I}\subseteq D_L$ and $\E_{I,L}\subseteq D_I$. Let us explain in detail what $\E_{L,I}$ is for $L$ irreducible.

First notice that then $D_L$ consists of exactly two opposite chambers.
We say that $L$ is `exposed' if $|L|\leq 2$ or $|L|=3$ and there are at least two elements of $L$ not adjacent to any element of $S\setminus (L\cup L^\perp)$. For $L$ exposed 
we set $\E_{L,I}=D_L$. Otherwise, we can predict which of the two chambers is better positioned with respect to $I$, and we set $\E_{L,I}$ to be that chamber. Namely, we choose $\E_{L,I}$ inside $\Phi^\mu_s$ for $m\in I$ and `good' $s$ and $\{s,j_1\}$. The notion of `good' is discussed in Section~\ref{subsec:relative position}. For example if $m$ adjacent to both $j_1$ and $j_2$, then $s$ and $\{s,j_1\}$ are good. We designed this notion to make $\E_{L,I}$ independent of the choice of $s,j_1$, which is proved in Sections~\ref{sec:independent} and~\ref{sec:independent2}. This allows us to define the complexity in Section~\ref{sec:complexity}. From now on we assume that the complexity of $S$ is minimal among all Coxeter generating sets twist equivalent to~$S$.

Going back to the goal of proving the consistency of $\{s,t\}$, we consider the components of $S\setminus (\{s,t\}\cup \{s,t\}^\perp)$. Using the 3-rigidity of~$S$ and `moves', we obtain that the markings $\mu=((s,tp\cdots),m)$ and $((s,t),p)$ with various $p$ in a fixed component $A$ give rise to the same $\Phi^{t,A}_{s}:=\Phi^\mu_s$. Thus to prove the consistency of $\{s,t\}$ one needs to prove that  the pair $\Phi^{t,A}_{s},\Phi^{s,A}_{t}$ is geometric (which we call the `self-compatibility' of $A$), and that $\Phi^{t,A}_{s}=\Phi^{t,B}_{s}$ and $\Phi^{s,A}_{t}=\Phi^{s,B}_{t}$ for every other component $B$ (which we call the `compatibility' of $A$ and $B$). We gradually show that in the subsequent  sections. There we call $A$ `small' if all the elements of $A$ are adjacent to both $s,t$; we call $A$ `big' otherwise. We call (small or big) $A$ `exposed' if there is an exposed $L\supset \{s,t\}$ intersecting $A$.

In Section~\ref{sec:proof} we prove that small components are self-compatible, and that each exposed component is self-compatible and compatible with any other component. This is done using various elementary twists provided by an exposed $L$, which allow to turn $\E_{L,I}=D_L$ `towards'~$C_I$ and decrease~$\mathcal K_2$ in the case of incompatibility. 

In Section~\ref{sec:big}, which is the central part of the article, we prove the compatibility of big components. 
For example, for the big components of $S\setminus (\{s,t\}\cup \{s,t\}^\perp)$ with $m_{st}=3$, we partition $S\setminus (\{s,t\}\cup \{s,t\}^\perp)$ into $A\sqcup B$, where $A$ is the union of components $A_i$ with one $\Phi^{t,A_i}_{s}$, and $B$ is the union of components $B_i$ with the other $\Phi^{t,B_i}_{s}$. We then prove that the elementary twist $\tau$ sending each element $b\in B$ to $tstbtst$ and fixing the other elements of $S$ decreases $\mathcal K_1$. In the case where $m_{st}>3$, a crucial concept is that of `peripherality', which picks out the `least' inconsistent $\{s,t\}$ and allows to decrease $\mathcal K_1$ using a `folding'. Finally, in Section~\ref{sec:small} we prove the self-compatibility of big components, and their compatibility with small ones.

Having established the consistency of doubles $\{s,t\}$, it is not hard to prove that $\Phi^\mu_s$ depends only on~$s$ (which as we explained implies the Main Theorem), following a simplified version of the main argument of~\cite{HP}, which we present in Section~\ref{sec:consistent}.

\textbf{Reading the article.} Upon a first reading, we recommend to ignore~$\mathcal K_2$ and assume that there are no small or exposed components. This means skipping Sections~\ref{subsec:relative position}--\ref{sec:proof} except for the definition of~$\mathcal K_1$ and the ones in Section~\ref{sec:proof}, and focusing on understanding the details of Section~\ref{sec:big}. After that, it should become clear that to treat small components it is not enough to use only $\mathcal K_1$, which motivates the introduction of~$\mathcal K_2$ with all its technical aspects.

Let us also mention that our construction of a `folding' in Section~\ref{sec:big} for $m_{st}=4$ agrees with the construction in the article of Weigel \cite[Fig~1]{W}. His assumptions on the defining graph do not allow for irreducible spherical subsets~$L$ with $|L|>2$, so he does not need to discuss small components or~$\mathcal K_2$. However, our Main Theorem does not imply the Main Theorem of \cite{W} since Weigel allows for some subsets of $S$ that violate FC.

\textbf{Acknowledgements.} We thank Pierre-Emmanuel Caprace, with whom we designed a large bulk of the strategy executed in this paper, including the main ideas in Section~\ref{sec:big}. We also thank Jingyin Huang for many long discussions and for designing together Sections~\ref{subsec:relative position}--\ref{sec:independent2}. We thank the referee for many helpful suggestions.

\section{Preliminaries}
\label{sec:prem}

\subsection{Davis complex}
\label{subsec:Davis}

Let $\mathbb A$ be the \emph{Davis complex} of a Coxeter system~$(W,S)$ (see~\cite[\S7.3]{Davis} for a precise definition). The 1-skeleton of~$\mathbb A$ is the Cayley graph of $(W,S)$ with vertex set ~$W$ and a single edge spanned on $\{w,ws\}$ for each $w\in W, s\in S$. Higher dimensional cells of $\mathbb A$ are spanned on left cosets in $W$ of remaining finite~$\langle J\rangle$. The left action of~$W$ on itself extends to the action on $\mathbb A$.

A \emph{chamber} is a vertex of $\mathbb A$. Collections of chambers corresponding to cosets $w\langle J\rangle$ are called $J$-\emph{residues} of $\mathbb A$.  A \emph{gallery} is an edge-path in $\mathbb A$. For two chambers $c_1,c_2\in\mathbb A$, we define their \emph{gallery distance}, denoted by $d(c_1,c_2)$, to be the length of a shortest gallery from $c_1$ to~$c_2$.

Let $r\in W$ be an $S$-reflection. The fixed point set of the action of $r$ on~$\mathbb A$ is called its \emph{wall} $\Y_r$. The wall $\Y_r$ determines $r$ uniquely. Moreover, $\Y_r$ separates~$\mathbb A$ into two connected components, which are called \emph{halfspaces (for $r$)}.
If a non-empty subset $K\subset\mathbb A$ is contained in a single halfspace, then $\Phi(\Y_r,K)$ denotes this halfspace. An edge of $\mathbb A$ crossed by $\Y_r$ is \emph{dual} to~$\Y_r$. A chamber is \emph{incident} to $\Y_r$ if it is an endpoint of an edge dual to $\Y_r$. The \emph{distance} of a chamber~$c$ to~$\Y_r$, denoted by $d(c,\Y_r)$, is the minimal gallery distance from $c$ to a chamber incident to $\Y_r$.

\subsection{Geometric set of reflections}
\label{sec:subs}
Let $(W,S)$ be a Coxeter system. Let $\Dr$ be the Davis complex for $(W,S)$ (`ref' stands for `reference complex'). For each reflection $r$, let~$\Y_r$ be its wall in $\Dr$.
Suppose that $S$ is \textbf{angle-compatible} with another Coxeter generating set $S'$. Let $\Da$ be the Davis complex for $(W,S')$ (`amb' stands for `ambient complex'). For each reflection $r$, let $\W_r$ be its wall in $\Da$. Let $P\subseteq S$.

\begin{defn}
\label{def:geom}
Let $\{\Phi_p\}_{p\in P}$ be a collection of halfspaces of $\Da$ for~$p\in P$. The collection $\{\Phi_p\}_{p\in P}$ is \emph{$2$-geometric} if for any pair $p,r\in P$, the set $\Phi_{p}\cap\Phi_{r}\cap \Da^{(0)}$ is a fundamental domain for the action of $\langle p,r\rangle$ on $\Da^{(0)}$.
The collection $\{\Phi_p\}_{p\in P}$ is \emph{geometric} if additionally $F=\bigcap_{p\in P}\Phi_p\cap\Da^{(0)}$ is non-empty.
The set $P$ is \emph{$2$-geometric} if there exists a $2$-geometric collection of halfspaces $\{\Phi_p\}_{p\in P}$. \end{defn}

\begin{thm}[{\cite[Thm 4.2]{CM}}]
\label{thm:geometric}
If $\{\Phi_p\}_{p\in P}$ is $2$-geometric, then after possibly replacing each $\Phi_p$ by opposite halfspace, the collection $\{\Phi_p\}_{p\in P}$ is geometric.
\end{thm}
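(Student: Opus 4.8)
The plan is to prove the contrapositive-free statement directly by a ``turn the halfspaces'' argument together with a connectivity/induction on the poset of walls crossed by a minimal gallery between two candidate fundamental chambers. Suppose $\{\Phi_p\}_{p\in P}$ is $2$-geometric. For each $p$, pick the chamber (i.e. vertex of $\Da$) $c_p$ minimizing $d(\cdot,\W_p)$ inside $\Phi_p$, so $c_p$ is incident to $\W_p$ and $\Phi_p=\Phi(\W_p,c_p)$; replacing $\Phi_p$ by the opposite halfspace corresponds to replacing $c_p$ by its mirror $p\cdot c_p$. The claim to establish is that we may choose, for each $p$, one of the two chambers $c_p$ or $p\cdot c_p$ so that the resulting halfspaces have a common vertex, i.e. $F=\bigcap_{p\in P}\Phi_p\cap\Da^{(0)}\neq\emptyset$. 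Since $2$-geometricity is unaffected by these replacements, it suffices to exhibit one such choice.

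First I would reduce to $P$ finite: a vertex $v$ of $\Da$ has only finitely many walls $\W_p$ with $p\in P$ at distance $0$ that could obstruct membership in a halfspace, and the Davis complex is a CAT(0) cube-like complex whose halfspaces satisfy the finite-intersection (Helly-type) property, so if every finite subcollection admits a consistent choice with nonempty common intersection, a compactness/limiting argument over the (locally finite) complex produces a global choice — here I would invoke that $\bigcap$ of a descending chain of nonempty halfspace-intersections in $\Da$ is nonempty because walls in a Davis complex are ``thin'' and the complex has finite thickness along any residue. So assume $|P|<\infty$. Next, using $2$-geometricity pairwise: for each pair $p,r$, the set $\Phi_p\cap\Phi_r\cap\Da^{(0)}$ being a fundamental domain for $\langle p,r\rangle$ forces the walls $\W_p,\W_r$ to bound a ``wedge'' and the two halfspaces to be coherently oriented — in particular $\Phi_p\cap\Phi_r\neq\emptyset$ automatically. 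So the issue is purely whether pairwise nonempty intersections can be upgraded to a global nonempty intersection; this is exactly a Helly property for halfspaces in a median/CAT(0) space, which holds for Davis complexes.

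The key steps, in order: (1) record that $\Da$ is (the Davis complex hence) a median space / its vertex set with gallery distance is a median graph-like object in which half-spaces are convex and the collection of convex subsets has the Helly property for any finite family that is pairwise intersecting; (2) show $2$-geometricity implies each pair $\Phi_p,\Phi_r$ has nonempty intersection of vertex sets (immediate, since a fundamental domain is nonempty) and moreover that after the allowed sign flips one can keep all pairwise intersections nonempty — here one argues that flipping $\Phi_p$ to its opposite is forced/allowed consistently because the ``fundamental domain'' condition for $\langle p,r\rangle$ is symmetric under simultaneously flipping neither or exactly the ones on one side of $\W_p$, combined with a spanning-tree argument on the ``incompatibility graph'' whose vertices are $P$ and whose edges record which pairs currently have empty intersection; (3) apply Helly to conclude $F\neq\emptyset$; (4) observe the pairwise fundamental-domain property is preserved, so $\{\Phi_p\}$ is now geometric in the sense of Definition~\ref{def:geom}.

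The main obstacle I expect is step (2): ensuring that the sign flips needed to make each pair intersect can be made \emph{simultaneously consistent} across all of $P$, rather than fixing one pair and breaking another. The clean way is to set up a graph $G$ on $P$ with an edge $pr$ labelled by whether $\Phi_p\cap\Phi_r=\emptyset$ or not, show that along any cycle the number of ``bad'' edges is even (this is where $2$-geometricity is used essentially — it constrains the relative orientations of three pairwise-adjacent walls), and then $2$-color the vertices of $G$ accordingly, flipping $\Phi_p$ exactly on one color class. Verifying the even-cycle condition reduces to the rank-$3$ case, i.e. to understanding the finite dihedral (or $A_1\times A_1$, $A_3$, etc.) subgroups $\langle p,r,q\rangle$ and checking that three walls whose pairwise halfspace-intersections are fundamental domains cannot be ``cyclically misoriented''; this is a finite check in each spherical rank-$3$ Coxeter group, and is precisely the content cited from \cite[Thm 4.2]{CM}.
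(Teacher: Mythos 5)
The paper offers no proof of this statement at all---it is quoted verbatim from \cite[Thm 4.2]{CM}---so your argument has to stand on its own, and its engine fails. The Helly property you invoke for pairwise-intersecting halfspaces in $\Da$ is false: the Davis complex is CAT(0) but not median (it is not a cube complex in general), and three pairwise-intersecting halfspaces can have empty common intersection. Concretely, let $W$ be the affine $(3,3,3)$ triangle group with $S=S'=P=\{p,q,r\}$. The walls $\W_p,\W_q,\W_r$ bound the fundamental triangle; choosing each $\Phi_x$ to be the halfspace \emph{not} containing the fundamental chamber gives a $2$-geometric collection (each pairwise intersection is the $\pi/3$-sector opposite the corresponding corner, which is again a fundamental domain for the order-$6$ dihedral subgroup), yet $\bigcap_x\Phi_x\cap\Da^{(0)}=\emptyset$. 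This example simultaneously breaks your flip-selection mechanism: since $2$-geometricity already forces every pairwise intersection to be nonempty (a fundamental domain is nonempty, as you note), your ``incompatibility graph'' never has any edges, the $2$-colouring prescribes no flips, and your procedure halts on the outward choice above with $F=\emptyset$, whereas the theorem requires flipping all three halfspaces. Detecting which flips are needed even though all pairwise data look fine is exactly the content of the theorem, and your even-cycle criterion, which you reduce to \emph{spherical} rank-$3$ subgroups, cannot see it, because the obstruction lives in non-spherical triples such as $\langle p,q,r\rangle$ here.

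There are secondary problems as well. The claim that ``$2$-geometricity is unaffected by these replacements'' is wrong: if $\W_p$ and $\W_r$ cross at angle $\pi/m$ with $m\geq 3$, or if $\langle p,r\rangle$ is infinite, flipping exactly one of $\Phi_p,\Phi_r$ replaces a fundamental-domain sector by a region that is not a fundamental domain; only suitably coordinated flips preserve the pairwise condition, and arranging that coordination is part of what must be proved, not an ambient invariance. The reduction to finite $P$ via a compactness argument is beside the point (here $P\subseteq S$ and $S$ is a Coxeter generating set of a finitely generated group), and the appeal to ``finite thickness'' of walls is not a property one can cite. A correct treatment would have to reproduce the actual argument of Caprace--M\"uhlherr (or supply a genuinely different one); the present sketch does not.
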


Theorem~\ref{thm:geometric} justifies calling $2$-geometric $P$ \emph{geometric} for simplicity. We call $F$ as above a \emph{geometric fundamental domain for $P$}, since by~\cite{Hee} (see also \cite[Thm~1.2]{HRT} and \cite[Fact~1.6]{CM}), we have:

\begin{prop}
\label{prop:geometric}
If $P$ is geometric, then $F$ is a fundamental domain for the action of $\langle P \rangle$ on $\Da^{(0)}$, and for each $p\in P$ there is a chamber in $F$ incident to $\W_p$. In particular, if $P=S$, then $S$ is conjugate to~$S'$.
\end{prop}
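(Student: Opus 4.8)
The plan is to deduce Proposition~\ref{prop:geometric} from the hypothesis that $P$ is geometric, i.e.\ that we already have a geometric collection of halfspaces $\{\Phi_p\}_{p\in P}$ with $F=\bigcap_{p\in P}\Phi_p\cap\Da^{(0)}$ non-empty. First I would recall from Theorem~\ref{thm:geometric} and the preceding discussion that we may assume the collection $\{\Phi_p\}_{p\in P}$ is genuinely geometric, so that each $\Phi_p\cap\Phi_r\cap\Da^{(0)}$ is a fundamental domain for $\langle p,r\rangle$ acting on $\Da^{(0)}$. The key input is the classical result of H\'ee (see also \cite[Thm~1.2]{HRT} and \cite[Fact~1.6]{CM}): if a group $G$ acting on a set (here $\Da^{(0)}$) is generated by reflections $\{p\}_{p\in P}$, and there is a subset $F$ which is simultaneously an intersection of halfspaces $\Phi_p$ with $F=G\cap F$ a fundamental domain for each pair $\langle p,r\rangle$, then $F$ is a fundamental domain for the whole group $\langle P\rangle$, and moreover $(\langle P\rangle, P)$ is a Coxeter system with $F$ its Coxeter chamber.

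The main steps would be: (1) Fix $c_0\in F$. For each $p\in P$, the wall $\W_p$ separates $\Da^{(0)}$ into the two halfspaces for $p$, one of which is $\Phi_p\ni c_0$. Since $\Phi_p\cap\Phi_p\cap\Da^{(0)}=\Phi_p\cap\Da^{(0)}$ must be a fundamental domain for $\langle p\rangle=\{1,p\}$ — taking $r=p$ in the $2$-geometric condition, or directly — the halfspace $\Phi_p$ is a strict fundamental domain for the reflection $p$, so in particular $c_0$ is incident to $\W_p$: otherwise one could find a chamber on the boundary wall strictly between, contradicting minimality of the fundamental domain. Actually the cleanest route is to observe that if no chamber of $F$ were incident to $\W_p$, then $F$ would lie in the interior of $\Phi_p$, and then a nearest-point/gallery argument would produce a chamber of $\Phi_p$ in the $\langle p\rangle$-orbit of $F$, violating the fundamental-domain property; I would spell this out using gallery distance $d(\cdot,\W_p)$ and the fact that a minimal gallery crossing $\W_p$ does so exactly once. (2) Apply H\'ee's theorem to conclude $F$ is a fundamental domain for $\langle P\rangle\acts\Da^{(0)}$. (3) For the last sentence, specialize to $P=S$: then $\langle S\rangle=W$ acts on $\Da^{(0)}=W$ (the chamber set of the Davis complex of $(W,S')$) with fundamental domain $F$, and by H\'ee's theorem $(W,S)$ — with the given $S$ — is a Coxeter system whose Coxeter chamber is $F$, realized as a single chamber $c\in\Da^{(0)}$; then $S$ is the set of reflections through the walls bounding $c$, which form a Coxeter generating set conjugate (by the element of $W=\Da^{(0)}$ taking the base chamber of $\Da$ to $c$) to $S'$.

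For step~(1) I would argue as follows in more detail: suppose some $p\in P$ has no chamber of $F$ incident to $\W_p$. Pick $c_1\in F$ minimizing $d(c_1,\W_p)$; by assumption $d(c_1,\W_p)\geq 1$. Let $\gamma$ be a minimal gallery from $c_1$ to a chamber $c_2$ incident to $\W_p$, and let $c_3=pc_2'$ where $c_2'$ is the chamber across $\W_p$ from $c_2$; then $pc_1$ is connected to $c_3$ by $p\gamma$ and one checks $pc_1\in p\Phi_p$, the halfspace opposite to $\Phi_p$. But also $p$ fixes $\W_p$ pointwise, and a short computation with the reflection shows $pc_1$ and $c_1$ lie on opposite sides while $d(pc_1,\W_p)=d(c_1,\W_p)$; since $F\subseteq\Phi_p$ and $pc_1\notin\Phi_p$, the chambers $c_1$ and $pc_1$ are distinct elements of the $\langle p\rangle$-orbit of $F$ — fine — but the contradiction comes from pairing $p$ with some $r\neq p$: using that $\Phi_p\cap\Phi_r\cap\Da^{(0)}$ is a fundamental domain for $\langle p,r\rangle$ one shows every $\langle p\rangle$-orbit meets $\Phi_p$ in exactly the "boundary-adjacent" locus, forcing incidence. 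The honest statement is that this incidence is precisely part of what H\'ee's theorem (and the cited \cite[Thm~1.2]{HRT}, \cite[Fact~1.6]{CM}) delivers once $2$-geometricity is known, so I would not reprove it from scratch but cite it.

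The main obstacle I anticipate is purely bibliographic rather than mathematical: making sure that the version of H\'ee's theorem being invoked has hypotheses matching exactly what "$2$-geometric" (Definition~\ref{def:geom}) provides — namely that the fundamental-domain condition on \emph{pairs} $\langle p,r\rangle$ suffices to run the induction on gallery length that globalizes it to $\langle P\rangle$. This is exactly the content of \cite{Hee} and the reformulations in \cite[Thm~1.2]{HRT} and \cite[Fact~1.6]{CM}, so the proof of Proposition~\ref{prop:geometric} is essentially a citation plus the gallery argument for incidence in step~(1); the specialization $P=S$ in the final sentence is then immediate since $\Da^{(0)}$ is a free transitive $W$-set and a fundamental domain for a transitive action is a single point.
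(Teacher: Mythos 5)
Your proposal takes essentially the same route as the paper: Proposition~\ref{prop:geometric} is not proved in the text at all but is quoted directly from \cite{Hee} (see also \cite[Thm~1.2]{HRT} and \cite[Fact~1.6]{CM}), which is exactly the citation you ultimately fall back on, and your specialization to $P=S$ (the fundamental domain is a single chamber, so each $s\in S$ is the reflection in a wall of that chamber, giving conjugacy to $S'$) is the standard glue. The wobbly detour in your step~(1) — an arbitrary $c_0\in F$ need certainly not be incident to $\W_p$, and the attempted gallery argument does not close — is immaterial, since you, like the paper, defer the incidence statement to the cited results.
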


\begin{cor}[{\cite[Cor 2.6]{HP}}]
	\label{cor:spherical conjugate}
Let $J\subseteq S$ be spherical. Then $J$ is conjugate to a spherical $J'\subseteq S'$. In particular, $J$ is geometric, and if it is irreducible, there exist exactly two geometric fundamental domains for $J$.
\end{cor}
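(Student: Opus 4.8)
Our plan is to establish the three assertions in turn; the first is the substantive one, and from it the other two follow by geometry internal to $\Da$.

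\textbf{Step 1: $J$ is conjugate in $W$ to a spherical $J'\subseteq S'$.} We distinguish cases by $|J|$. If $|J|\le 1$ this is clear: for $J=\{s\}$, the $S$-reflection $s$ is an $S'$-reflection (reflection-compatibility being built into angle-compatibility), hence conjugate to an element of $S'$. If $|J|=2$ this is precisely the angle-compatibility hypothesis. So suppose $|J|\ge 3$. Since $\langle J\rangle$ is finite it fixes a point of the $\mathrm{CAT}(0)$ complex $\Da$, hence a cell; taking one of minimal dimension, $\langle J\rangle$ lies in a conjugate $w\langle J''\rangle w^{-1}$ of the standard parabolic of a spherical $J''\subseteq S'$, but in no proper standard parabolic of $\langle J''\rangle$; comparing ranks gives $|J|=|J''|$. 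Conjugating $J$, we may assume $\langle J\rangle\le\langle J''\rangle$ with $J''\subseteq S'$ and of equal rank. Now angle-compatibility is used on the pairs: every $\{j,k\}\subseteq J$ is spherical, hence $W$-conjugate to a pair in $S'$, so each rank-$2$ standard parabolic $\langle j,k\rangle$ of $(\langle J\rangle,J)$ is a parabolic subgroup of $W$, and therefore (sitting inside the standard parabolic $\langle J''\rangle$) a parabolic subgroup of $\langle J''\rangle$. The crux — and what I expect to be the main obstacle — is to promote this rank-$2$ information to the assertion that $\langle J\rangle$ is a parabolic subgroup of $\langle J''\rangle$: since $\langle J\rangle$ and $\langle J''\rangle$ have equal rank this forces $\langle J\rangle=\langle J''\rangle$. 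This rests on understanding how non-parabolicity of a reflection subgroup $R\le G$ of a finite Coxeter group is already witnessed by a rank-$\le 2$ standard parabolic of $R$ (with a little case bookkeeping for the labels $m_{jk}\in\{4,6\}$, where $\langle j,k\rangle$ can properly contain reflection subgroups). Once $\langle J\rangle=\langle J''\rangle$, the sets $J$ and $J''$ are two Coxeter generating sets of the finite group $\langle J''\rangle$, hence conjugate in $\langle J''\rangle$; so $J$ is conjugate in $W$ to the spherical set $J''\subseteq S'$.

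\textbf{Step 2: $J$ is geometric.} Write $J=wJ'w^{-1}$ with $J'\subseteq S'$ spherical, as in Step 1, and let $c_0=\mathbf 1\in W=\Da^{(0)}$ be the base chamber. For $j=wj'w^{-1}\in J$ put $\Phi_j:=\Phi(\W_j,wc_0)=w\,\Phi(\W_{j'},c_0)$; this is well defined since $wc_0\notin\W_j$, and $wc_0\in\bigcap_{j\in J}\Phi_j$. For a pair $j=wj'w^{-1}$, $k=wk'w^{-1}$ in $J$ we have $\Phi_j\cap\Phi_k\cap\Da^{(0)}=w(F_0)$, where $F_0=\Phi(\W_{j'},c_0)\cap\Phi(\W_{k'},c_0)\cap\Da^{(0)}=\{v\in W:\ell(j'v)>\ell(v),\ \ell(k'v)>\ell(v)\}$ and $\ell$ is word length with respect to $S'$. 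By standard Coxeter theory $F_0$ is the set of minimal-length representatives of the left cosets of the finite group $\langle j',k'\rangle$, hence a fundamental domain for its left action on $\Da^{(0)}$; applying $w$, the set $\Phi_j\cap\Phi_k\cap\Da^{(0)}$ is a fundamental domain for $\langle j,k\rangle$. Thus $\{\Phi_j\}_{j\in J}$ is a $2$-geometric collection (in fact geometric, as $\bigcap_j\Phi_j\ne\emptyset$), so by Definition~\ref{def:geom} the set $J$ is geometric; alternatively one may invoke Theorem~\ref{thm:geometric}. By Proposition~\ref{prop:geometric}, $F:=\bigcap_j\Phi_j\cap\Da^{(0)}$ is then a fundamental domain for $\langle J\rangle$.

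\textbf{Step 3: for irreducible $J$, there are exactly two geometric fundamental domains.} Keep the collection $\{\Phi_j\}$ of Step 2, and for $E\subseteq J$ let $\{\Phi_j^E\}$ be the collection obtained by replacing $\Phi_j$ with the opposite halfspace exactly for $j\in E$; every collection of halfspaces for $J$ is of this form. If $\emptyset\ne E\subsetneq J$, then since $J$ is irreducible it is not contained in $E\cup E^{\perp}$, so there are $j\in E$ and $k\in J\setminus E$ with $m_{jk}\ge 3$. As $\{j,k\}$ is $W$-conjugate to an $S'$-pair, the dihedral residue of $\langle j,k\rangle$ in $\Da$ is the standard $2m_{jk}$-chamber one, and there exactly the two ``matching'' pairs of halfspaces — $\Phi_j\cap\Phi_k$ and its total opposite — cut out fundamental domains of $\langle j,k\rangle$ (the other $2m_{jk}-2$ sectors do not; this uses $m_{jk}\ge 3$). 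Since $\Phi_j^E\cap\Phi_k^E$ is a mismatched pair, $\{\Phi_j^E\}$ is not $2$-geometric. Hence only $E=\emptyset$ and $E=J$ can yield geometric collections, and both do: $E=\emptyset$ gives the fundamental domain $F$, while $E=J$ is obtained by repeating Step 2 with $wc_0$ replaced by $ww_{J'}c_0$, where $w_{J'}$ is the longest element of the irreducible group $\langle J'\rangle$ (so that $\W_j$ separates $wc_0$ from $ww_{J'}c_0$ for every $j$); this gives a geometric collection with fundamental domain $F^{*}$, and $F^{*}\ne F$ because $wc_0\in F\setminus F^{*}$. Finally, any geometric fundamental domain $F'$ determines its defining collection — any chamber $c'\in F'$ satisfies $\Phi_j=\Phi(\W_j,c')$ — so distinct geometric collections yield distinct fundamental domains; therefore $F$ and $F^{*}$ are the only ones.
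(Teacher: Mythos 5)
The substantive issue is Step 1, which carries essentially all the content of the corollary (note the paper itself gives no proof: it imports the statement verbatim from \cite{HP}*{Cor.\ 2.6}, where it ultimately rests on rigidity/classification facts for finite Coxeter groups). Your reduction is reasonable: conjugate $\langle J\rangle$ into a spherical standard parabolic $\langle J''\rangle$ of $(W,S')$ whose proper parabolics do not contain it, observe that angle-compatibility makes each $\langle j,k\rangle$ a parabolic subgroup of $W$, hence of $\langle J''\rangle$, and then try to promote this rank-$2$ information to parabolicity of $\langle J\rangle$ in $\langle J''\rangle$. But that promotion is precisely the hard finite-group statement, and you explicitly leave it unproven (``the crux --- and what I expect to be the main obstacle''); without an actual argument (in practice a case analysis over finite types, or an appeal to the known results behind \cite{HP}), the main assertion is not established. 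Moreover, the closing sentence of Step 1 --- ``$J$ and $J''$ are two Coxeter generating sets of the finite group $\langle J''\rangle$, hence conjugate in $\langle J''\rangle$'' --- is false as a general principle: the dihedral group of order $12$ is also a Coxeter group of type $A_1\times A_2$, so Coxeter generating sets of a finite Coxeter group need not be conjugate, nor even of the same cardinality. To conclude here you would again need the angle data and the finite-type bookkeeping you deferred; in the extreme case $J=S$ with $W$ finite, that one sentence \emph{is} the theorem. Similarly, ``comparing ranks gives $|J|=|J''|$'' silently uses that a Coxeter generating set consisting of reflections of a finite reflection group has size equal to the essential rank; this is true but needs justification of the same kind.

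By contrast, Steps 2 and 3 are correct and complete modulo standard facts, and they are exactly how one would deduce the last two assertions from the first: the length-theoretic description of the halfspaces $\Phi(\mathcal W_{j},wc_0)$, the unique minimal (resp.\ maximal) representatives of the cosets of $\langle j',k'\rangle$, and the count showing that a mismatched pair of halfspaces meets each $\langle j,k\rangle$-orbit in $m_{jk}-1>1$ chambers together give that $J$ is geometric and that an irreducible $J$ has exactly two geometric fundamental domains (the one containing $wc_0$ and the one containing $ww_{J'}c_0$). So the gap is concentrated in, and fatal to, the first and principal assertion of the corollary.
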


We will need the following compatibility result.

\begin{lem}
	\label{lem:change of base}
Let $J\subset S$ be irreducible spherical, and let $r_1,r_2\in S\setminus J$ with $J\cup\{r_1,r_2\}$ geometric. Let $\W_1$ and $\W_2$ be walls of $\Da$ fixed by some reflections in~$\langle J\rangle$ and satisfying $\W_i\cap\W_{r_i}=\emptyset$ for $i=1,2$.
Let $\Delta_1, \Delta_2$ be the geometric fundamental domains for $J$ satisfying
$\Phi(\W_i,\Delta_i)=\Phi(\W_i,\W_{r_i})$ for $i=1,2$. Then $\Delta_1=\Delta_2$.
\end{lem}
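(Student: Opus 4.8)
The plan is to reduce the statement to a single claim about a fixed wall $\W$ in $\langle J\rangle$, and then to exploit the fact that for an irreducible spherical $J$ there are exactly two geometric fundamental domains (Corollary~\ref{cor:spherical conjugate}), which are opposite across every wall in $\langle J\rangle$. Concretely, suppose for contradiction that $\Delta_1\neq\Delta_2$; then $\Delta_1$ and $\Delta_2$ are the two distinct geometric fundamental domains for $J$, so for \emph{every} wall $\W$ of $\Da$ fixed by a reflection in $\langle J\rangle$ we have $\Phi(\W,\Delta_1)=-\Phi(\W,\Delta_2)$ (the opposite halfspace). In particular, picking any single wall $\W_0$ fixed by a reflection of $\langle J\rangle$ with $\W_0\cap\W_{r_1}=\emptyset$ and $\W_0\cap\W_{r_2}=\emptyset$ — which we must first arrange — we would get $\Phi(\W_0,\W_{r_1})=\Phi(\W_0,\Delta_1)=-\Phi(\W_0,\Delta_2)=-\Phi(\W_0,\W_{r_2})$, i.e.\ $\W_{r_1}$ and $\W_{r_2}$ lie on opposite sides of $\W_0$. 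The goal is then to show this is incompatible with $J\cup\{r_1,r_2\}$ being geometric.

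First I would handle the reduction to a common wall. Since $\W_1\cap\W_{r_1}=\emptyset$, the wall $\W_{r_1}$ lies in one halfspace of $\W_1$, namely $\Phi(\W_1,\W_{r_1})=\Phi(\W_1,\Delta_1)$; similarly for the index $2$. I want a wall $\W_0$ in $\langle J\rangle$ disjoint from both $\W_{r_1}$ and $\W_{r_2}$ with a controlled side. The natural candidate: let $F=\bigcap_{x\in J\cup\{r_1,r_2\}}\Phi_x$ be a geometric fundamental domain for $J\cup\{r_1,r_2\}$ (using that this set is geometric and Proposition~\ref{prop:geometric}); then $F$ is contained in a geometric fundamental domain $\Delta$ for $J$, and $F\subseteq\Phi_{r_i}=\Phi(\W_{r_i},F)$. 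For any wall $\W$ in $\langle J\rangle$, since $\Delta\subseteq\Phi(\W,\Delta)$ and $F\subseteq\Delta$, we get $\Phi(\W,\Delta)=\Phi(\W,F)$; and $F$ being on one side of $\W_{r_i}$ together with $F$ adjacent to some chamber incident to $\W_{r_i}$ forces $\W\cap\W_{r_i}=\emptyset$ for a suitable $\W$, but more robustly one argues directly: $\Phi(\W_i,\Delta_i)=\Phi(\W_i,\W_{r_i})$ together with $\Delta_1\neq\Delta_2$ (hence $\Delta$ equals one of them, say $\Delta=\Delta_1$, after relabelling) pins down the side of $\W_{r_2}$ relative to every $\langle J\rangle$-wall $\W$ disjoint from it, via $\Delta\subseteq\Phi(\W,\W_{r_2})$ forcing $\Delta=\Delta_2$ — contradiction, provided such a disjoint $\W$ exists. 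I expect the existence of a wall $\W$ in $\langle J\rangle$ simultaneously disjoint from $\W_{r_1}$ and $\W_{r_2}$, with the same side data, to follow from the product structure of the finite Coxeter group $\langle J\cup\{r_1,r_2\}\rangle$ restricted to the residues involved; this is where I would invoke that $J\cup\{r_1,r_2\}$ is geometric, so that the combinatorics of the walls of $\langle J\rangle$ and of $\W_{r_1},\W_{r_2}$ is governed by a finite Coxeter group acting on its fundamental domain $F$.

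The main obstacle I anticipate is exactly this last point: producing, for a single wall $\W$ of $\langle J\rangle$, the simultaneous disjointness $\W\cap\W_{r_1}=\emptyset$ and $\W\cap\W_{r_2}=\emptyset$ while retaining the side-identification that forces $\Delta_1=\Delta_2$. The hypotheses only give one such wall for each $r_i$ separately, and a priori $\W_1\neq\W_2$. The fix is to use the geometric fundamental domain $F$ of $J\cup\{r_1,r_2\}$ as a hinge: $F$ is a fundamental domain for the finite group $\langle J\cup\{r_1,r_2\}\rangle$ (Proposition~\ref{prop:geometric}), every $\langle J\rangle$-wall meeting $F$ is a wall of that finite group, and one checks that there is a $\langle J\rangle$-wall $\W$ with $F$ adjacent to it on the $\Phi(\W_{r_i},F)$-side for both $i$ — this is a statement about the Coxeter complex of a finite group, hence a finite check reducible (since $J$ is irreducible) to the rank-$2$ and rank-$3$ cases, matching the running hypothesis that all weakly separating $J$ have $|J|\le 2$. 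Once $\W$ with these properties is in hand, the chain $\Phi(\W,\W_{r_1})=\Phi(\W,\Delta_1)$ and $\Phi(\W,\W_{r_2})=\Phi(\W,\Delta_2)$ holds by the definition of $\Delta_i$ applied with $\W$ in place of $\W_i$ — legitimate because $\Delta_i$ is characterised among the two fundamental domains by its side with respect to \emph{any} single $\langle J\rangle$-wall disjoint from $\W_{r_i}$ — and then the common side $\Phi(\W,\W_{r_1})=\Phi(\W,\W_{r_2})$ (which follows from $F$ lying on that side of both) yields $\Phi(\W,\Delta_1)=\Phi(\W,\Delta_2)$, whence $\Delta_1=\Delta_2$.
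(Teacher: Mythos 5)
Your plan contains the right raw materials (the geometric fundamental domain $F$ for $J\cup\{r_1,r_2\}$, Proposition~\ref{prop:geometric}, and the geometric fundamental domain $\Delta\supseteq F$ for $J$), but the assembly has a genuine gap, and the obstacle you single out is not the real one. No common wall $\W_0$ disjoint from both $\W_{r_1}$ and $\W_{r_2}$ is needed: one works with the given $\W_1$ and $\W_2$ separately and shows that each $\Delta_i$ equals the same $\Delta$, namely the geometric fundamental domain for $J$ containing $F$. The step that actually carries the proof --- and which you assert but never justify --- is the equality $\Phi(\W_i,\W_{r_i})=\Phi(\W_i,F)$, equivalently $\Delta\subseteq\Phi(\W_i,\W_{r_i})$. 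The paper gets this from Proposition~\ref{prop:geometric}: there is a chamber $x_i\in F$ incident to $\W_{r_i}$; since the edge dual to $\W_{r_i}$ at $x_i$ is crossed only by $\W_{r_i}$ and $\W_i\cap\W_{r_i}=\emptyset$, the chamber $x_i$ lies on the same side of $\W_i$ as $\W_{r_i}$; and since $x_i\in F\subseteq\Delta$ with $\Delta$ contained in one halfspace of $\W_i$, one gets $\Phi(\W_i,\W_{r_i})=\Phi(\W_i,x_i)=\Phi(\W_i,F)=\Phi(\W_i,\Delta)$, hence $\Delta_i=\Delta$ for $i=1,2$. In your write-up the incident chamber is invoked only to ``force disjointness for a suitable $\W$'', not to pin down the side, so this crux is missing.

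The two patches you propose for your common-wall route do not repair this. The claim that ``$\Delta_i$ is characterised among the two fundamental domains by its side with respect to \emph{any} single $\langle J\rangle$-wall disjoint from $\W_{r_i}$'' is essentially the lemma itself (the case $r_1=r_2$ with two different walls), so leaning on it is circular. And the proposed existence argument for a common wall --- ``a finite check reducible to the rank-$2$ and rank-$3$ cases, matching the hypothesis that weakly separating subsets have size at most $2$'' --- is both unjustified and beside the point: this lemma sits in the preliminaries before any rigidity hypothesis is in force, $3$-rigidity restricts only weakly separating subsets and says nothing about the rank of an arbitrary irreducible spherical $J$ (which can be as large as $8$), and, as above, no common wall is required. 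Replacing the contradiction setup and the common-wall search by the direct three-step chain through $x_i$ and $F$ closes the proof.
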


\begin{proof}
Let $F\subset \Da^{(0)}$ be the geometric fundamental domain for $J\cup \{r_1,r_2\}$. By Proposition~\ref{prop:geometric}, for $i=1,2$, there is chamber $x_i\in F$ incident to $\W_{r_i}$. Let $\Delta$ be the geometric fundamental domain for $J$ containing~$F$. Then for $i=1,2,$ we have $\Phi(\W_i,\W_{r_i})=\Phi(\W_i,x_i)=\Phi(\W_i,F)=\Phi(\W_i,\Delta)$, and so $\Delta_i=\Delta$.
\end{proof}

We close with the following result, which is \cite[Lem~5.4]{HP}. Note that we assumed there that $\mathcal W=\mathcal W_r$ for some $r\in S$, but the proof works word for word without that assumption.

\begin{lem}
\label{sublem:extra}
Let $\{j_1,j_2\}\subset S$ be irreducible spherical. Suppose that a wall $\mathcal W$ in $\Da$ is disjoint from
both $\W_{j_2}$ and $j_1\W_{j_2}$, and we have $\Phi(\W_{j_2},\W)=\Phi(\W_{j_2},j_1\W)$.
Let $F$ be a geometric fundamental domain for $\{j_1,j_2\}$.
Then $\mathcal W$ is disjoint from $j_2\W_{j_1}$ and we have $\Phi(\W_{j_2},\W)=\Phi(\W_{j_2},F)$ if and only if $\Phi(\W_{j_1},j_2\W)=\Phi(\W_{j_1},F)$.
\end{lem}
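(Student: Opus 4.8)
The plan is to argue symmetrically, so it suffices to prove one implication: assuming $\Phi(\W_{j_2},\W)=\Phi(\W_{j_2},F)$, we derive that $\W$ is disjoint from $j_2\W_{j_1}$ and $\Phi(\W_{j_1},j_2\W)=\Phi(\W_{j_1},F)$; the reverse implication then follows by swapping the roles of $j_1$ and $j_2$, since the hypotheses ``$\W$ disjoint from $\W_{j_2}$ and $j_1\W_{j_2}$ with $\Phi(\W_{j_2},\W)=\Phi(\W_{j_2},j_1\W)$'' are themselves symmetric in $j_1,j_2$ once one shows (as part of the proof) that they imply the analogous statement with indices swapped.

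The key geometric input is that $\langle j_1,j_2\rangle$ is a finite dihedral group acting on the residue $C$ spanned by $F$, and $F$ is a fundamental domain meeting this residue in a single chamber. Inside this residue there are exactly the walls fixed by reflections of $\langle j_1,j_2\rangle$, namely $\W_{j_1}$, $\W_{j_2}$, $j_1\W_{j_2}$, $j_2\W_{j_1}$, and so on; the chamber $F$ is the unique one of $C$ lying on the positive side $\Phi(\W_{j_i},F)$ of each of these. First I would record that $\W$ lies on a definite side of each of these walls: disjointness of $\W$ from $\W_{j_2}$ and from $j_1\W_{j_2}$ is given, and one transports disjointness from $j_1\W_{j_2}$ to disjointness from $j_2\W_{j_1}$ using that $j_1\W_{j_2}$ and $j_2\W_{j_1}$ are reflections of each other across walls that $\W$ is already known to avoid — concretely, $j_2 j_1 \W_{j_2} = j_2\W_{j_1}$ in the dihedral group, and one checks $\W$ is disjoint from and on the correct side of the intermediate wall. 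Here I would invoke Corollary~\ref{cor:spherical conjugate}: $\{j_1,j_2\}$ being irreducible spherical has exactly two geometric fundamental domains, which are opposite, so ``$\Phi(\W_{j_2},\W)=\Phi(\W_{j_2},F)$'' pins down $F$ among the two.

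With the sidedness established, the second step is bookkeeping in the dihedral group. The hypothesis $\Phi(\W_{j_2},\W)=\Phi(\W_{j_2},j_1\W)$ says that $\W$ and $j_1\W$ are on the same side of $\W_{j_2}$; applying $j_1$, equivalently $j_1\W$ and $\W$ are on the same side of $j_1\W_{j_2}$. Combined with $\Phi(\W_{j_2},\W)=\Phi(\W_{j_2},F)$ and the fact that $F$ is characterized inside $C$ by being positive on every wall of $\langle j_1,j_2\rangle$, a short case analysis on which of the at most $2m_{j_1j_2}$ sectors of $C$ the wall $\W$ ``points into'' forces $\Phi(\W_{j_1},j_2\W)=\Phi(\W_{j_1},F)$: the point is that $\W$ being positive on $\W_{j_2}$ and agreeing with $j_1\W$ on that side, together with being disjoint from $j_1\W_{j_2}$, leaves only the sectors adjacent to $F$ available, all of which lie on $\Phi(\W_{j_1},F)$, and translating by $j_2$ preserves this.

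I expect the main obstacle to be the sidedness transport in the first step — showing cleanly that disjointness of $\W$ from $j_1\W_{j_2}$ yields disjointness from $j_2\W_{j_1}$ with the correct side, without case-splitting on the parity of $m_{j_1j_2}$ or on the sign configuration. The cleanest route is probably to pass to the finite residue $C$, use that walls in a finite Coxeter complex that are disjoint from $\W$ are linearly ordered by the halfspaces they determine relative to $\W$, and then note that the dihedral action is transitive enough on the relevant walls that the single sidedness equality $\Phi(\W_{j_2},\W)=\Phi(\W_{j_2},j_1\W)$ propagates. If one prefers to avoid that, one can instead cite Lemma~\ref{sublem:extra}'s original proof context; but since the statement here is exactly \cite[Lem~5.4]{HP} with the harmless weakening noted just above it, the cleanest thing is to reduce to that reference and verify its proof never used $\mathcal W=\mathcal W_r$.
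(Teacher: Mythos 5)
The paper does not reprove this statement at all: Lemma~\ref{sublem:extra} is imported verbatim from \cite[Lem~5.4]{HP}, with the single remark that the proof there never uses the hypothesis $\mathcal W=\mathcal W_r$. So your closing suggestion (cite that lemma and check that its proof does not use $\mathcal W=\mathcal W_r$) is exactly the paper's treatment and would be acceptable on its own. The direct argument you sketch instead, however, contains genuine errors at precisely the points that carry the content of the lemma. First, your ``concrete'' transport of disjointness rests on the identity $j_2j_1\W_{j_2}=j_2\W_{j_1}$, which is false unless $m_{j_1j_2}=3$ (it amounts to $j_2j_1j_2j_1j_2=j_2j_1j_2$); when $m_{j_1j_2}=3$ no transport is needed, since then $j_1\W_{j_2}=j_2\W_{j_1}$ and disjointness is part of the hypothesis. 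Second, the reduction of the ``if and only if'' to one implication by swapping $j_1$ and $j_2$ is not available: the hypotheses are not symmetric, since nothing is assumed about $\mathcal W$ versus $\W_{j_1}$, and in the intended application (Corollary~\ref{cor:extra}) $\mathcal W$ actually crosses $\W_{j_1}$, so the swapped hypotheses can simply fail. The reduction is still easy, but for a different reason: by Corollary~\ref{cor:spherical conjugate} there are exactly two geometric fundamental domains, they lie on opposite sides of $\W_{j_2}$ and of $\W_{j_1}$, and each of the two side conditions in the conclusion holds for exactly one of them; hence proving the forward implication for both choices of $F$ already yields the equivalence.

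Third, the sector bookkeeping is wrong where it matters. The correct use of the same-side hypothesis is the one you begin with: applying $j_1$ shows that $\mathcal W$ lies in $\Phi(\W_{j_2},\mathcal W)\cap j_1\Phi(\W_{j_2},\mathcal W)$. When $\Phi(\W_{j_2},\mathcal W)=\Phi(\W_{j_2},F)$, this intersection meets only the two sectors for $\{j_1,j_2\}$ adjacent along $\W_{j_1}$, namely the ones containing $F$ and $j_1F$; since $\mathcal W$ also avoids the two bounding walls, this is what gives disjointness of $\mathcal W$ from every wall of $\langle j_1,j_2\rangle$ other than possibly $\W_{j_1}$, in particular from $j_2\W_{j_1}$. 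These two sectors do \emph{not} both lie in $\Phi(\W_{j_1},F)$ (the $j_1F$-sector is on the other side of $\W_{j_1}$), contrary to your claim, and the final step cannot be ``translating by $j_2$ preserves this'': $j_2$ does not preserve the halfspaces of $\W_{j_1}$, since $j_2\W_{j_1}\neq\W_{j_1}$. What has to be checked is that $j_2$ maps the union of those two sectors into a single halfspace of $\W_{j_1}$, namely $\Phi(\W_{j_1},F)$, and this is exactly where the bound $m_{j_1j_2}\geq 3$ enters (in the planar picture, a cone of angular width $2\pi/m$ based at the $\W_{j_1}$-direction is moved by $j_2$ into the open half-plane bounded by $\W_{j_1}$ precisely because $3\pi/m\leq\pi$). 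The same computation with the opposite fundamental domain gives the converse direction. Your appeal to a ``linear order'' on walls disjoint from $\mathcal W$ is neither needed nor justified. As written, then, the sketch does not constitute a proof; either repair it along the lines above or fall back on the citation of \cite[Lem~5.4]{HP}, which is what the paper does.
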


We have the following immediate consequence, which is a variant of \cite[Lem~5.1]{CP}.

\begin{cor}
\label{cor:extra}
Let $\{j_1,j_2\}\subset S$ be irreducible spherical. Suppose that a wall $\mathcal W$ in $\Da$ is disjoint from
both $\W_{j_2}$ and $j_1\W_{j_2}$, and intersects~$\W_{j_1}$. Let $F$ be a geometric fundamental domain for $\{j_1,j_2\}$.
Then $\mathcal W$ is disjoint from $j_2\W_{j_1}$ and we have $\Phi(\W_{j_2},\W)=\Phi(\W_{j_2},F)$ if and only if $\Phi(\W_{j_1},j_2\W)=\Phi(\W_{j_1},F)$.
\end{cor}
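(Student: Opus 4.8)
The plan is to deduce Corollary~\ref{cor:extra} directly from Lemma~\ref{sublem:extra}, by checking that the corollary's extra hypothesis---that $\mathcal W$ intersects $\W_{j_1}$---implies the hypothesis $\Phi(\W_{j_2},\mathcal W)=\Phi(\W_{j_2},j_1\mathcal W)$ that appears in the lemma. Once this implication is established, the two statements have literally the same conclusion, so the corollary follows by applying Lemma~\ref{sublem:extra} to the same irreducible spherical $\{j_1,j_2\}$, the same wall $\mathcal W$, and the same geometric fundamental domain $F$.

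To prove the implication I would argue as follows. Since $\mathcal W$ is disjoint from $\W_{j_2}$, it lies entirely in one halfspace of $\W_{j_2}$, namely $\Phi(\W_{j_2},\mathcal W)$; and since $\mathcal W$ is disjoint from $j_1\W_{j_2}$, applying the isometry $j_1$ of $\Da$ (which carries $j_1\W_{j_2}$ onto $\W_{j_2}$) shows that $j_1\mathcal W$ is disjoint from $\W_{j_2}$ and hence lies entirely in $\Phi(\W_{j_2},j_1\mathcal W)$. The two halfspaces of $\W_{j_2}$ are the two connected components of $\Da\setminus\W_{j_2}$, so in particular they are disjoint; therefore it suffices to exhibit a single point lying in both $\mathcal W$ and $j_1\mathcal W$ in order to force $\Phi(\W_{j_2},\mathcal W)=\Phi(\W_{j_2},j_1\mathcal W)$.

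Such a point is produced by the extra hypothesis: pick $x\in\mathcal W\cap\W_{j_1}$, which is non-empty by assumption. Since $j_1$ fixes $\W_{j_1}$ pointwise, $j_1x=x$, so $x=j_1x\in j_1\mathcal W$, and thus $x\in\mathcal W\cap j_1\mathcal W$. This gives $\Phi(\W_{j_2},\mathcal W)=\Phi(\W_{j_2},j_1\mathcal W)$, and Lemma~\ref{sublem:extra} then yields both that $\mathcal W$ is disjoint from $j_2\W_{j_1}$ and the asserted equivalence between $\Phi(\W_{j_2},\mathcal W)=\Phi(\W_{j_2},F)$ and $\Phi(\W_{j_1},j_2\mathcal W)=\Phi(\W_{j_1},F)$. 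I do not expect any real difficulty; the only point requiring a moment's care is the elementary observation that two subsets of $\Da$, each disjoint from $\W_{j_2}$, sit in the same halfspace of $\W_{j_2}$ once they meet---this is precisely what converts the geometric input ``$\mathcal W$ crosses $\W_{j_1}$'' into the combinatorial hypothesis demanded by the lemma.
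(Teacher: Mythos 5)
Your proposal is correct and follows exactly the route the paper intends: the paper states Corollary~\ref{cor:extra} as an immediate consequence of Lemma~\ref{sublem:extra}, and your verification that the crossing hypothesis $\mathcal W\cap\W_{j_1}\neq\emptyset$ yields $\Phi(\W_{j_2},\W)=\Phi(\W_{j_2},j_1\W)$ (via a point of $\mathcal W\cap\W_{j_1}$ fixed by $j_1$, hence common to $\mathcal W$ and $j_1\mathcal W$, both of which are connected and disjoint from $\W_{j_2}$) is precisely the omitted "immediate" step.
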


\section{Bases and markings}
\label{subsec:bases and markings}
Henceforth, in the entire article we assume that \textbf{$S$ is irreducible, not spherical, and of type $\mathrm{FC}$}. (The reducible case easily follows from the irreducible.)

In this section we recall several central notions from \cite{CP}.
Let $W,S,\Dr,\Y_r$ (and later $S',\Da, \W_r$) be as in Section~\ref{sec:subs}.  Let $c_0$ be the identity chamber of~$\Dr$.

\subsection{Bases}
\begin{defn}
	\label{domain} A \emph{base} is a pair
	$(s,w)$ with \emph{core} $s\in S$ and $w\in W$ satisfying
	\begin{enumerate}[(i)]
	\item
		$w=j_1\cdots j_n$, where $n\geq 0$, and $j_i\in S$,
	\item
		$d(w.c_0,\mathcal{Y}_{s})=n$,
	\item
		the \emph{support} $J=\{s,j_1,\ldots,j_n\}$ is spherical.
	\end{enumerate}
	\end{defn}

Note that this agrees with \cite[Def 3.1]{CP}. Indeed, Condition~(ii) from \cite[Def 3.1]{CP} saying that every wall that separates $w.c_0$ from $c_0$ intersects~$\mathcal{Y}_s$ follows immediately from our Condition~(iii). On the other hand, our Condition~(iii) follows from \cite[Lem 3.5]{CP} since $S$ is of type FC. Note also that our Condition~(ii) implies that $J$ is irreducible.
A base is \emph{simple} if $s$ and all $j_i$ are distinct.
In \cite[Lem 3.7]{CP} and the paragraph preceding it, we established the following.

\begin{remark}
\label{rem:unique}
If $J\subset S$ is irreducible spherical and $s\in J$, then there
is a unique simple base $(s,w)$ with support $J$ and core $s$. We have $w=j_1\cdots j_n$ for any ordering of the elements of $J\setminus \{s\}$ into a sequence $(j_i)$ with each $\{s,j_1,\ldots, j_i\}$ irreducible. We often denote that base $(s,w)$ by $(s,J)$.
\end{remark}

The following result is a straightforward generalisation of \cite[Lem~3.3]{HP}, where a base was assumed to be simple.

\begin{lem}
	\label{lem:spherical same side}
Let $J\subset S$ be irreducible spherical, and let $F$ be a geometric fundamental domain for $J$. Then for any base $(s,w)$ with support~$J$ we have $\Phi(\W_s,F)=\Phi(\W_s,wF)$.
\end{lem}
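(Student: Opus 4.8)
The plan is to reduce the general statement to the case of a simple base, which is exactly \cite[Lem~3.3]{HP}, by an induction that peels off a repetition in the word $w=j_1\cdots j_n$. First I would dispose of the base case: if $(s,w)$ is simple, then $s$ and all the $j_i$ are distinct, so the support $J=\{s,j_1,\ldots,j_n\}$ has exactly $n+1$ elements and the claim $\Phi(\W_s,F)=\Phi(\W_s,wF)$ is the content of \cite[Lem~3.3]{HP}. For the inductive step, suppose $(s,w)$ is a base with support $J$ that is not simple, so some element of $J$ appears at least twice among $s,j_1,\ldots,j_n$ or equals $s$. I would argue that after a harmless manipulation one may assume $w$ is a reduced expression in the Coxeter group $\langle J\rangle$ (which is finite, hence $w$ has a well-defined length); indeed Condition~(ii) of Definition~\ref{domain} forces $d(w.c_0,\mathcal Y_s)=n$, and I would check that a non-reduced expression would contradict this by letting us shorten $w$ while keeping the endpoint, decreasing $n$ but not the distance to $\mathcal Y_s$. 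So the real work is to show that every base is, up to replacing the expression $j_1\cdots j_n$ by a shorter one representing the same $w\in W$, expressible with $n=|J|-1$, i.e.\ that $\ell_J(w)=|J|-1$; once we know $w=j_1'\cdots j_m'$ with the $j_i'$ distinct and spanning $J\setminus\{s\}$, Remark~\ref{rem:unique} identifies $(s,w)$ with the simple base $(s,J)$ and we are done by the base case.

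To prove $\ell_J(w)=|J|-1$ under Conditions~(i)--(iii): since $J$ is spherical and, by the remark after Definition~\ref{domain}, irreducible, the distance $d(w.c_0,\mathcal Y_s)$ inside the residue of $\langle J\rangle$ equals $n$ only if the gallery $c_0\to w.c_0$ is a geodesic staying in that residue and crossing exactly the walls separating $w.c_0$ from the wall $\mathcal Y_s$. The key point is that in the finite Coxeter group $\langle J\rangle$, with $\mathcal Y_s$ the wall of the reflection $s$, a geodesic from $c_0$ that moves monotonically away from $\mathcal Y_s$ and whose endpoint realizes the maximal distance $|J|-1$ (the diameter of the quotient by $\langle s\rangle$, using irreducibility of $J$) must cross $|J|-1$ distinct walls, whence $n=|J|-1$ and the $j_i$ are forced to be distinct — this is essentially the combinatorial input already extracted in \cite[Lem~3.5, Lem~3.7]{CP} and cited in Remark~\ref{rem:unique}. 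I would simply invoke Remark~\ref{rem:unique} directly: it asserts the uniqueness of the simple base with a given support and core, and its proof (in \cite{CP}) already shows any base with support $J$ and core $s$ has $n=|J|-1$ with distinct letters. So in fact the cleanest route is: by Remark~\ref{rem:unique} and its underlying lemmas, every base $(s,w)$ with support $J$ has $w$ equal to the $w$ of the simple base $(s,J)$, hence every base with support $J$ and core $s$ \emph{is} simple, and the lemma is literally \cite[Lem~3.3]{HP}.

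The main obstacle — and the reason the lemma is phrased as a ``straightforward generalisation'' rather than a triviality — is making precise that Conditions~(i)--(iii) of Definition~\ref{domain} genuinely force simplicity, i.e.\ that there is no non-simple base. If that is not literally true (for instance if one allows $w$ to be written with a redundant expression even though the group element is the same), then ``straightforward generalisation'' must instead mean: the halfspace $\Phi(\W_s,wF)$ depends only on the group element $w$, not on its expression as $j_1\cdots j_n$, so replacing $w$ by its reduced expression reduces to the simple case. Either way the one delicate verification is the length identity $\ell_J(w)=|J|-1$ from Condition~(ii); everything after that is a citation of \cite[Lem~3.3]{HP} via Remark~\ref{rem:unique}.
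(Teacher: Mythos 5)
Your reduction does not work: Conditions~(i)--(iii) of Definition~\ref{domain} do \emph{not} force a base to be simple, and the length identity $\ell_J(w)=|J|-1$ on which your whole argument rests is false for general bases. The paper itself exhibits the counterexample: right after Lemma~\ref{lem:figure45} it is noted that for $m_{st}\neq 3$ (so $m_{tp}=3$) the pair $(t,spt)$ is a base, and that ``making use of this base and its extensions is exactly the reason for which we need to discuss in this article bases that are not simple.'' Here the support is $J=\{s,t,p\}$, so $|J|-1=2$, while $n=3$ and the core $t$ reappears as the last letter; nevertheless $d(spt.c_0,\mathcal Y_t)=3$, so Condition~(ii) holds. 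Note also that the word $spt$ is already reduced in $\langle J\rangle$, so your fallback of ``replace $w$ by a reduced expression'' changes nothing; and the group element $spt$ is not the group element of the simple base $(t,J)$ from Remark~\ref{rem:unique} (which has length $2$), so the statement for this base is not a formal consequence of the simple case via ``the halfspace depends only on the group element.'' In short, Remark~\ref{rem:unique} asserts uniqueness of the \emph{simple} base with given support and core; it does not say every base with that support and core is simple, and it cannot be used to collapse the general case onto \cite[Lem~3.3]{HP} quoted as a black box.

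What the paper means by ``straightforward generalisation'' is different: one reruns the \emph{proof} of \cite[Lem~3.3]{HP} and observes that it never uses simplicity of the base, only the consequence of Conditions~(ii)--(iii) recorded after Definition~\ref{domain}, namely that every wall of $\Dr$ separating $c_0$ from $w.c_0$ intersects $\mathcal Y_s$ (equivalently, one inducts on $n$ along the gallery $c_0, j_1.c_0,\ldots,w.c_0$, transferring at each step the crossing of the relevant wall with $\mathcal Y_s$ to the corresponding walls in $\Da$ and concluding that $F$ and $wF$ are never separated by $\W_s$). That argument goes through verbatim for non-simple bases such as $(t,spt)$, which is the case the lemma is actually needed for. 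So the correct fix is not to prove that non-simple bases do not exist (they do), but to redo the inductive wall-crossing argument of \cite{HP} without the simplicity hypothesis.
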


\subsection{Markings}

\begin{defn}
\label{marking} A \emph{marking} is a pair $\mu=((s,w),m)$, where $(s,w)$ is a base with support $J$ and where the \emph{marker} $m\in S$ is such that $J\cup \{m\}$ is not spherical. The \emph{core} and the \emph{support} of the marking $\mu$ are the core and the support of its base. We say that $\mu$ is \emph{simple}, if its base is simple.
\end{defn}

Our definition of a marking agrees with the notion of a \emph{complete marking} from \cite[Def~3.8]{CP}. To see that, note that since $S$ if of type FC, $m$~is not adjacent to some element of $J$ and hence
by \cite[Rem~3.2(ii)]{CP} we have that $w\mathcal Y_m$ is disjoint from $\mathcal Y_s$. We decided to drop the term `complete' since we will not be discussing any other markings in this article. Similarly, our definition of a simple marking agrees with the notion of a \emph{good marking} from \cite[Def~3.13]{CP}, since by FC there are no semicomplete markings described in \cite[Def~3.11]{CP}.

\begin{remark}[{\cite[Rem~3.5]{HP}}]
\label{rem:find markings}
For each $s\in I\subset S$ with $I$ irreducible spherical, there exists a simple marking with support containing $I$ and core $s$.
\end{remark}

\begin{defn}
\label{halfspace}
Let $\mu=((s,w),m)$ be a marking. Since $w\mathcal Y_m$ is disjoint from $\mathcal Y_s$, the element $wmw^{-1}s$ is of infinite order, and hence also $w\mathcal W_m$ is disjoint from $\mathcal W_s$. We define
$\Phi_s^\mu=\Phi(\mathcal{W}_s, w\mathcal{W}_m)$.
\end{defn}

\begin{prop}[{\cite[Prop 5.2]{CP}}]
\label{consistence}
Let $s_1,s_2\in S$. Suppose that for each $i=1,2$, any simple marking $\mu$ with core~$s_i$ gives rise to the same
$\Phi_{s_i}=\Phi_{s_i}^{\mu}$. Then the pair $\Phi_{s_1}, \Phi_{s_2}$
is geometric.
\end{prop}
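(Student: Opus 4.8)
The goal is to prove that $\{\Phi_{s_1},\Phi_{s_2}\}$ is geometric, i.e.\ (Definition~\ref{def:geom}) that $\Phi_{s_1}\cap\Phi_{s_2}\cap\Da^{(0)}$ is a fundamental domain for $D:=\langle s_1,s_2\rangle$; note $\Phi_{s_i}$ is well defined by Remark~\ref{rem:find markings}. I would first treat two easy cases. If $D$ is infinite, then $((s_i,1),s_{3-i})$ is a valid simple marking (its support $\{s_i\}$ together with its marker $s_{3-i}$ is not spherical), so by hypothesis $\Phi_{s_i}=\Phi(\W_{s_i},\W_{s_{3-i}})$; since $s_1s_2$ has infinite order the walls $\W_{s_1},\W_{s_2}$ are disjoint, and then the ``slab'' $\Phi(\W_{s_1},\W_{s_2})\cap\Phi(\W_{s_2},\W_{s_1})$ meets each $D$-orbit in exactly one chamber, so it is the required fundamental domain. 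If $m_{s_1s_2}=2$, then $\W_{s_1}\ne\W_{s_2}$ cross and $D\cong\Z_2\times\Z_2$ acts simply transitively on the four regions cut out by $\W_{s_1},\W_{s_2}$ (as $s_i$ preserves each side of $\W_{s_{3-i}}$ and swaps the two sides of $\W_{s_i}$); hence $\Phi_{s_1}\cap\Phi_{s_2}$, being one such region, is automatically a fundamental domain.

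Assume from now on $3\le m_{s_1s_2}<\infty$, so $J:=\{s_1,s_2\}$ is irreducible spherical; by Corollary~\ref{cor:spherical conjugate} it has exactly two geometric fundamental domains, lying on opposite sides of each of $\W_{s_1},\W_{s_2}$. The plan is to produce a single geometric fundamental domain $F$ for $J$ with $\Phi_{s_i}=\Phi(\W_{s_i},F)$ for $i=1,2$. Granting this, since $F=\Phi(\W_{s_1},F)\cap\Phi(\W_{s_2},F)\cap\Da^{(0)}$ and, by Proposition~\ref{prop:geometric}, $F$ is a fundamental domain for $\langle J\rangle=D$, we conclude $\Phi_{s_1}\cap\Phi_{s_2}\cap\Da^{(0)}=F$, which finishes the proof.

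To construct $F$, suppose first there is $b\in S\setminus J$ non-adjacent to $s_1$ (if instead every such $b$ is adjacent to $s_1$ but some is non-adjacent to $s_2$, interchange $s_1$ and $s_2$; the remaining case is addressed below). Then $\{s_1,b\}$, hence also $\{s_1,s_2,b\}$, is not spherical, so $((s_1,1),b)$, $((s_1,s_2),b)$ and $((s_2,s_1),b)$ are all valid simple markings, and the hypothesis gives
$$\Phi_{s_1}=\Phi(\W_{s_1},\W_b)=\Phi(\W_{s_1},s_2\W_b),\qquad \Phi_{s_2}=\Phi(\W_{s_2},s_1\W_b).$$
I would then apply Lemma~\ref{sublem:extra} with $\{j_1,j_2\}=\{s_2,s_1\}$ and the wall $\W_b$: its hypotheses hold because $\W_b$ is disjoint from $\W_{s_1}$ (as $b$ is non-adjacent to $s_1$) and from $s_2\W_{s_1}$ (since $((s_1,s_2),b)$ is a marking, cf.\ Definition~\ref{halfspace}), and because $\Phi(\W_{s_1},\W_b)=\Phi(\W_{s_1},s_2\W_b)$ by the display. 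Choosing $F$ to be the geometric fundamental domain for $J$ with $\Phi(\W_{s_1},F)=\Phi(\W_{s_1},\W_b)=\Phi_{s_1}$, the conclusion of Lemma~\ref{sublem:extra} gives $\Phi(\W_{s_2},s_1\W_b)=\Phi(\W_{s_2},F)$, that is $\Phi_{s_2}=\Phi(\W_{s_2},F)$, as wanted. (When $b$ is adjacent to $s_2$ one may use Corollary~\ref{cor:extra} in place of Lemma~\ref{sublem:extra}.)

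It remains to treat the case in which every element of $S\setminus J$ is adjacent to both $s_1$ and $s_2$. Using that $S$ is irreducible, not spherical, and of type $\mathrm{FC}$, I would produce an irreducible spherical $L\supsetneq J$ that admits a marker $m$ and, by Remark~\ref{rem:unique}, simple bases $(s_1,s_2v)$ and $(s_2,s_1v)$ with support $L$; the hypothesis then gives $\Phi_{s_1}=\Phi(\W_{s_1},s_2v\W_m)$ and $\Phi_{s_2}=\Phi(\W_{s_2},s_1v\W_m)$, and one runs the previous argument with the wall $v\W_m$ in place of $\W_b$. The hard part here will be the bookkeeping: bringing the comparison down from $L$ to $J$ seems to require an induction on $|L|$, stripping off the letters of $v$ one at a time by repeated applications of Corollary~\ref{cor:extra} and Lemma~\ref{sublem:extra}, and at each step one must verify that the relevant walls are disjoint and that the supports involved satisfy the irreducibility and reducedness conditions those lemmas require. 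All the conceptual input is already in the three-markings identity of the previous paragraph, which is the only place the consistency hypothesis is used.
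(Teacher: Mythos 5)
This proposition is not proved in the paper at all: it is imported verbatim from \cite{CP}*{Prop 5.2}, so there is no in-paper proof to compare against and your argument has to stand on its own. Most of it does. The cases $m_{s_1s_2}=\infty$ and $m_{s_1s_2}=2$ are standard and correct, and your main case is handled correctly: when some $b\in S\setminus\{s_1,s_2\}$ is non-adjacent to $s_1$, the triples $((s_1,\emptyset),b)$, $((s_1,s_2),b)$, $((s_2,s_1),b)$ are indeed simple markings, the consistency hypothesis identifies the relevant halfspaces, and Lemma~\ref{sublem:extra} with $j_1=s_2$, $j_2=s_1$, $\mathcal W=\W_b$ produces one geometric fundamental domain $F$ for $\{s_1,s_2\}$ with $\Phi_{s_i}=\Phi(\W_{s_i},F)$ for both $i$, which gives $2$-geometricity.

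The last case (every element of $S\setminus\{s_1,s_2\}$ adjacent to both $s_1$ and $s_2$; this case is non-vacuous) is a genuine gap, not bookkeeping. Running ``the previous argument with $v\W_m$ in place of $\W_b$'' needs $\Phi_{s_1}=\Phi(\W_{s_1},v\W_m)$, i.e.\ it needs $((s_1,v),m)$ to be a simple marking with core $s_1$; but the support of $(s_1,v)$ is $L\setminus\{s_2\}$, which need not be irreducible, and Condition~(ii) of Definition~\ref{domain} forces supports of bases to be irreducible. Concretely, if the Coxeter--Dynkin diagram of $L$ is the path $j-s_2-s_1$, then $v=j$ commutes with $s_1$, $(s_1,j)$ is not a base, and since $m$ is adjacent to $s_1$ the wall $j\W_m=\W_{jmj}$ actually crosses $\W_{s_1}$, so $\Phi(\W_{s_1},v\W_m)$ is not even defined; if the diagram is $a-s_1-s_2-b$, neither $(s_1,v)$ nor $(s_2,v)$ is a base for the full support. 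Your fallback, an induction stripping letters of $v$, would compare walls $j_k\cdots j_n\W_m$ across walls $\W_{j_{k-1}}$ with $j_{k-1}\notin\{s_1,s_2\}$, i.e.\ it would need information about markings with other cores, which the hypothesis does not supply. What actually closes this case is a different choice of data rather than an induction: since the diagram of $L$ is a tree containing the edge $s_1s_2$, an element $j^*\in L$ non-adjacent to the marker $m$ (it exists by FC, and here $j^*\neq s_1,s_2$) lies on one side of that edge, say the $s_2$-side; taking $v$ to be the product along the path from $s_2$'s neighbour to $j^*$, one checks that $((s_2,v),m)$, $((s_2,s_1v),m)$ and $((s_1,s_2v),m)$ are all simple markings (Remark~\ref{rem:unique} and the fact that $s_1$ commutes with everything on the $s_2$-side), and then your three-marking identity plus Lemma~\ref{sublem:extra} applies with $j_2=s_2$ instead of $j_2=s_1$. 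As written, this selection of the correct side and support --- the actual content of the remaining case --- is missing from your proof.
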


We summarise Proposition~\ref{consistence},
Theorem~\ref{thm:geometric}, and Proposition~\ref{prop:geometric} in the following.

\begin{cor}[{\cite[Cor 3.8]{HP}}]
	\label{cor:geometric criterion}
	If for each $s\in S$ any simple marking $\mu$ with core $s$ gives rise to the same $\Phi_s^{\mu}$, then $S$ is conjugate to $S'$.
\end{cor}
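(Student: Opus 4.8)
The plan is to combine the three results just cited into a single chain. First I would fix, for each $s\in S$, a halfspace $\Phi_s$ of $\Da$ by taking $\Phi_s=\Phi_s^\mu$ for an arbitrary simple marking $\mu$ with core $s$; such a marking exists by Remark~\ref{rem:find markings} applied to $I=\{s\}$ (which is irreducible spherical since $S$ is not spherical, in particular $s$ is a reflection and $\{s\}$ is a singleton), and the hypothesis guarantees that $\Phi_s$ does not depend on the choice of $\mu$. So we obtain a well-defined collection $\{\Phi_s\}_{s\in S}$ of halfspaces.

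Next I would check that this collection is $2$-geometric in the sense of Definition~\ref{def:geom}. Fix $s_1,s_2\in S$. By the hypothesis, any simple marking with core $s_i$ gives rise to the same $\Phi_{s_i}$, so Proposition~\ref{consistence} applies and tells us that the pair $\Phi_{s_1},\Phi_{s_2}$ is geometric, i.e. $\Phi_{s_1}\cap\Phi_{s_2}\cap\Da^{(0)}$ is a fundamental domain for the action of $\langle s_1,s_2\rangle$ on $\Da^{(0)}$. Since $s_1,s_2$ were arbitrary, this is exactly the statement that $\{\Phi_s\}_{s\in S}$ is $2$-geometric, hence that the set $S$ is $2$-geometric, hence (Theorem~\ref{thm:geometric}, or the subsequent remark) geometric. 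By Theorem~\ref{thm:geometric}, after possibly replacing each $\Phi_s$ by the opposite halfspace, the collection $\{\Phi_s\}_{s\in S}$ is geometric, so $F=\bigcap_{s\in S}\Phi_s\cap\Da^{(0)}$ is non-empty.

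Finally I would invoke Proposition~\ref{prop:geometric} with $P=S$: since $S$ is geometric, the last assertion of that proposition gives directly that $S$ is conjugate to $S'$. This completes the proof. I do not expect any genuine obstacle here, as the statement is explicitly flagged in the excerpt as a summary of Proposition~\ref{consistence}, Theorem~\ref{thm:geometric}, and Proposition~\ref{prop:geometric}; the only minor point to be careful about is that flipping a halfspace $\Phi_s$ to its opposite (as permitted by Theorem~\ref{thm:geometric}) does not disturb the pairwise fundamental-domain property, which is immediate since being a fundamental domain for $\langle s_1,s_2\rangle$ is symmetric under replacing either halfspace by its complement together with the corresponding relabelling — but in fact Theorem~\ref{thm:geometric} already packages this for us, so nothing extra needs to be said.
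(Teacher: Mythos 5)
Your proof is correct and follows essentially the same route as the paper, which states the corollary precisely as a summary of Proposition~\ref{consistence}, Theorem~\ref{thm:geometric}, and Proposition~\ref{prop:geometric}: the hypothesis gives a well-defined collection $\{\Phi_s\}_{s\in S}$, Proposition~\ref{consistence} makes it $2$-geometric, Theorem~\ref{thm:geometric} upgrades this to geometric, and Proposition~\ref{prop:geometric} with $P=S$ yields the conjugacy. Your side remarks (existence of a simple marking with core $s$ via Remark~\ref{rem:find markings}, and the harmlessness of flipping halfspaces) are accurate and match the intended reading.
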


\subsection {Moves}
\begin{defn}
	\label{def:move}
	Let $((s,w),m), ((s,w'),m')$ be markings with common core. We say that they are
	related by \emph{move}
	\begin{enumerate}
		\item[(M1)] if $w=w'$, and the markers $m$ and $m'$ are adjacent;
		\item[(M2)] if there is $j\in S$ such that $w=w'j$ and
		moreover $m$ equals $m'$ and is adjacent to $j$.
	\end{enumerate}
We will write $((s,w),m)\sim((s,w'),m')$ if there is a finite sequence of moves M1 or M2 that brings  $((s,w),m)$ to $((s,w'),m')$.
\end{defn}

The following is a special case of \cite[Lem 4.2]{CP}.
\begin{lem}
	\label{moves do not change halfspace} If markings $\mu$ and $\mu'$ with common core $s$
	are related by move $\mathrm{M1}$ or~$\mathrm{M2}$, then $\Phi_s^\mu=\Phi_s^{\mu'}$.
	\end{lem}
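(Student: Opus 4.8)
The plan is to reduce both cases to one elementary observation about $\Da$: since $\Da$ is $\mathrm{CAT}(0)$ every wall is connected, so a connected subset of $\Da$ disjoint from a wall $\W$ lies in a single halfspace for $\W$; hence two connected subsets that are each disjoint from $\W$ and that intersect each other determine the same halfspace $\Phi(\W,\cdot)$. I will apply this with $\W=\W_s$ and with the two translates of marker walls attached to $\mu$ and $\mu'$; that these translates are disjoint from $\W_s$ is exactly what Definition~\ref{halfspace} records once one knows the relevant pair is a marking.

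For a move of type $\mathrm{M1}$, write $\mu=((s,w),m)$ and $\mu'=((s,w),m')$ with $m,m'$ adjacent. Then $\langle m,m'\rangle$ is finite, so $\W_m$ and $\W_{m'}$ cross --- for instance both pass through the centre of a spherical $\{m,m'\}$-residue --- and therefore $w\W_m$ and $w\W_{m'}$ cross. Both are disjoint from $\W_s$ because $\mu$ and $\mu'$ are markings, so the observation gives $\Phi_s^\mu=\Phi(\W_s,w\W_m)=\Phi(\W_s,w\W_{m'})=\Phi_s^{\mu'}$. For a move of type $\mathrm{M2}$, write $\mu=((s,w'j),m)$ and $\mu'=((s,w'),m)$ with $j$ adjacent to $m$. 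Then $jmj\in\langle j,m\rangle$ is a reflection, so $\W_m$ and $j\W_m=\W_{jmj}$ cross, and applying $w'$ the walls $w'\W_m$ and $w'j\W_m$ cross. Again both are disjoint from $\W_s$ since $\mu'$ and $\mu$ are markings, so $\Phi_s^{\mu'}=\Phi(\W_s,w'\W_m)=\Phi(\W_s,w'j\W_m)=\Phi(\W_s,w\W_m)=\Phi_s^\mu$.

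I do not expect a serious obstacle. The one point deserving care is the claim that two reflection walls of a Davis complex cross exactly when the two reflections generate a finite subgroup, which I would either cite from the standard theory of Davis complexes or verify directly by noting that the two walls meet the relevant spherical residue in two diagonals of a Davis polygon and hence share its centre. I would also remark that the statement is literally the special case of \cite[Lem 4.2]{CP} in which every marking in sight is complete, so nothing beyond the above is required.
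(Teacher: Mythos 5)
Your argument is correct. Note that the paper gives no argument of its own here: it simply observes that the statement is a special case of \cite[Lem 4.2]{CP}, and your direct proof is essentially the standard argument behind that cited lemma — for M1 the walls $w\W_m$ and $w\W_{m'}$ meet (both contain a point fixed by the finite group $\langle m,m'\rangle$), for M2 the walls $w'\W_m$ and $w'j\W_m=w\W_m$ meet, and in either case both walls are disjoint from $\W_s$ because both pairs are markings, so being connected they lie in the same halfspace for $\W_s$. The only step stated too quickly is in M2, where ``$jmj$ is a reflection, so $\W_m$ and $j\W_m$ cross'': being a reflection is not the reason; the reason is that $m$ and $jmj$ lie in the finite group $\langle j,m\rangle$, whose fixed-point set lies on both walls (if $j$ and $m$ commute the two walls even coincide, which is harmless) — this is exactly the standard fact about Davis complexes that you flag at the end, so the gap is only one of wording.
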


We have a straightforward generalisation of \cite[Prop~4.3]{HP}.

\begin{prop}
	\label{prop:compactible1}
Let $(s,w)$ be a base with support $I$. Suppose that no irreducible spherical $I'\supsetneq I$ weakly separates $S$.
Let $\mu_1=((s,ww_1),m_1)$ and $\mu_2=((s,ww_2),m_2)$ be markings with supports $J_1,J_2$, where each of $w_i$ is a product of distinct elements of $J_i\setminus I$. Moreover, for $i=1,2$ define $K_i=J_i\setminus (I\cup I^{\perp})$ when $I\subsetneq J_i$, and $K_i=\{m_i\}$ when $J_i=I$. Suppose that $K_1$ and $K_2$ are in the same component of $S\setminus(I\cup I^{\perp})$. Then $\mu_1\sim\mu_2$. Consequently $\Phi_s^{\mu_1}=\Phi_s^{\mu_2}$.
\end{prop}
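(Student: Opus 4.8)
The plan is to reduce everything to a chain of the elementary moves M1 and M2, and then invoke Lemma~\ref{moves do not change halfspace}. The statement is a relative version of \cite[Prop~4.3]{HP}: instead of starting from the `trivial' base $(s,\mathrm{id})$ we start from a fixed base $(s,w)$ with support $I$, and we claim that the two markings $\mu_1,\mu_2$, whose supports contain $I$ and whose `new parts' $K_1,K_2$ lie in the same component of $S\setminus (I\cup I^\perp)$, are move-equivalent. The reduction strategy is: (1) massage each $\mu_i$ into a normal form in which $ww_i$ is replaced by $w$ followed by a carefully ordered product through the component, using M2 moves to slide the letters of $w_i$ around and M1 moves to swap the marker; (2) show that once both markings are in this normal form and their `new parts' reach into the same component, a further sequence of M1/M2 moves connects them; (3) conclude via Lemma~\ref{moves do not change halfspace} that $\Phi^{\mu_1}_s=\Phi^{\mu_2}_s$.

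In more detail, I would first handle the two cases $I\subsetneq J_i$ and $J_i=I$ uniformly by noting that when $J_i=I$ the marker $m_i$ itself plays the role that the extra generators in $K_i$ play in the other case; so in all cases there is a well-defined non-empty subset of $S\setminus(I\cup I^\perp)$, contained in a single component $C$, that `records where $\mu_i$ points'. The key structural input is that $C$ is connected in the defining graph, so any element of $K_1$ can be joined to any element of $K_2$ by a path inside $C$; along this path we can push the marker using M1 (adjacent markers) interleaved with M2 (append an adjacent generator to $w$, then drop it again after moving the marker). This is exactly the mechanism of \cite[Prop~4.3]{HP}, and since that proof only used that $I$ admits no irreducible spherical proper extension weakly separating $S$ — which is our hypothesis on $I$ — and standard facts about bases (Remark~\ref{rem:unique}) and the type-FC assumption, it should generalise with only bookkeeping changes: every place where \cite[Prop~4.3]{HP} writes the base $(s,w_i)$ one writes $(s,ww_i)$, and one checks that the supports stay irreducible spherical (here one uses that $I$ is irreducible and that appending a connected chunk of a component keeps irreducibility, exactly as in Remark~\ref{rem:unique} and Definition~\ref{domain}(ii)).

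The step I expect to be the main obstacle is verifying that throughout the sequence of M2 moves the intermediate pairs really are bases in the sense of Definition~\ref{domain} — in particular that Condition~(ii), $d((ww')c_0,\mathcal Y_s)=\mathrm{length}$, persists, equivalently that all the supports encountered remain \emph{irreducible} spherical. Appending or deleting a generator $j$ adjacent to the last letter is harmless, but one must be careful that the running support never accidentally becomes reducible (which would make the pair fail to be a base) nor grows into an irreducible spherical $I'\supsetneq I$ that weakly separates $S$; the hypothesis rules the latter out, and for the former one organizes the path inside $C$ and the orderings of $K_i$ so that at each stage the support is $I$ together with an initial segment of a chosen ordering of the current chunk of $C$, with each such partial union irreducible — this is the content of the ordering in Remark~\ref{rem:unique}. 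Once this is in place, applying Lemma~\ref{moves do not change halfspace} along the chain gives $\Phi^{\mu_1}_s=\Phi^{\mu_2}_s$, completing the proof.
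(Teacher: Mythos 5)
Your proposal takes essentially the same route as the paper, which offers no independent argument for this proposition but simply asserts it is a straightforward generalisation of \cite[Prop~4.3]{HP}: your reduction to chains of moves M1 and M2 through the connected component containing $K_1$ and $K_2$, with the check that all intermediate pairs remain bases (irreducible spherical supports, as in Remark~\ref{rem:unique}) and that the hypothesis on weakly separating $I'\supsetneq I$ removes the only obstruction to carrying the marker across, is precisely the argument being invoked, followed by Lemma~\ref{moves do not change halfspace}. So the proposal is consistent with the paper's treatment and no genuinely different method is involved.
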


\subsection {Applications to $3$-rigid $S$}

We start with choosing the notation for the $K_i$ above in the case where $\mu_i$ is simple.

\begin{defn}
\label{def:K}
Let $\mu=((s,J),m)$ be a simple marking with the base $(s,J)$ defined in Remark~\ref{rem:unique}. Let $I\subseteq J$ be irreducible with $s\in I$. Then we denote by $K^\mu_I$
the set $J\setminus(I\cup I^\perp)$ if $J\neq I$, and the set $\{m\}$ otherwise. We simplify the notation $K^\mu_{\{s\}}$ to $K^\mu_s$ etc.

Let $\{s,t\}\subset S$ be irreducible spherical. For each component $A$ of $S\setminus(\{s,t\}\cup \{s,t\}^{\perp})$, taking $k\in A$ and $\mu=((s,t),k)$ (if it is a marking) or applying Remark~\ref{rem:find markings} to $I=\{s,t,k\}$, we obtain a simple marking~$\mu$ with support containing $t$, core~$s$, and $K^\mu_{s,t}\subseteq A$. If $S$ is $3$-rigid, then
by Proposition~\ref{prop:compactible1} if we have $\mu'$ with $K^{\mu'}_{s,t}\subseteq A$, then $\Phi^{\mu}_s=\Phi^{\mu'}_s$. Thus each component $A$ of $S\setminus(\{s,t\}\cup \{s,t\}^{\perp})$ determines a halfspace $\Phi^{t,A}_{s}:=\Phi^{\mu}_s$ for $s$.
\end{defn}

The following is another variation on \cite[Prop~4.3]{HP}.

\begin{prop}
\label{prop:same side2}
Suppose that $S$
is $3$-rigid. Let $\mu_1,\mu_2$ be simple markings with common core $s$.
Suppose that $K^{\mu_1}_s\cap K^{\mu_2}_s=\emptyset$ and that there is an embedded path $\omega$ in the defining graph of $S$ outside $s\cup s^{\perp}$ starting in $k_1\in K^{\mu_1}_s$ and ending in $k_2\in K^{\mu_2}_s$ such that \begin{enumerate}[(i)]
\item
for any vertex $k\neq k_1,k_2$ of $\omega$ adjacent to $s$ and any simple markings $\nu_1,\nu_2$ with supports containing $k$ and core $s$ we have $\Phi_{s}^{\nu_1}=\Phi_{s}^{\nu_2}$.
\item
if $k_1$ is adjacent to $s$, then $K^{\mu_1}_{s,k_1}$ lies in the same component of $S\setminus (\{s,k_1\}\cup \{s\cup k_1\}^{\perp})$ as $k_2$, and
\item
condition (ii) holds with indices $1$ and $2$ interchanged.
\end{enumerate}
Then $\Phi_s^{\mu_1}=\Phi_s^{\mu_2}$.
\end{prop}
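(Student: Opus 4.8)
The plan is to reduce the general case to the situation already handled by Proposition~\ref{prop:compactible1} by walking along the path $\omega$ one vertex at a time, carrying the equality of halfspaces along with us. First I would set up the induction on the length of $\omega$. If $\omega$ has length one, then $k_1$ and $k_2$ are adjacent and lie in a common component of $S\setminus(s\cup s^\perp)$; using condition (ii) (together with (iii)) to identify the relevant components of $S\setminus(\{s,k_1\}\cup\{s,k_1\}^\perp)$, one invokes Proposition~\ref{prop:compactible1} with base $(s,w)$ taken to be either $(s,\emptyset)$ or $(s,k_1)$ according to whether $k_1$ is adjacent to $s$, and the conclusion follows immediately. The subtlety in the base case is that when $k_1$ (or $k_2$) is \emph{not} adjacent to $s$, the hypothesis of Proposition~\ref{prop:compactible1} is about a component of $S\setminus(I\cup I^\perp)$ with $I=\{s\}$, which is exactly the component of $S\setminus(s\cup s^\perp)$ containing the whole path $\omega$ minus the vertices in $s^\perp$ — so one needs the path to avoid $s^\perp$, which is guaranteed by hypothesis.

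For the inductive step, let $k$ be the second vertex of $\omega$, so $\omega=k_1 k\cdots k_2$. I would produce a simple marking $\nu$ with support containing $\{s,k\}$ (if $k$ is adjacent to $s$) or just containing $s$ with $K^\nu_s \ni k$ (if not) via Remark~\ref{rem:find markings}, chosen so that $\Phi_s^{\mu_1}=\Phi_s^\nu$. When $k_1$ is not adjacent to $s$: then $k_1$ and $k$ lie in the same component of $S\setminus(s\cup s^\perp)$ (they are adjacent and both outside $s^\perp$), so Proposition~\ref{prop:compactible1} with $I=\{s\}$ gives $\Phi_s^{\mu_1}=\Phi_s^\nu$ directly. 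When $k_1$ is adjacent to $s$: if $k$ is also adjacent to $s$, then by condition (i) applied at the vertex $k$ we get that \emph{all} simple markings with support containing $k$ and core $s$ give the same halfspace, in particular $\Phi_s^{\mu_1}$ equals the one coming from any marking supported on $\{s,k,\dots\}$ provided $\Phi_s^{\mu_1}$ is among those — here one must check $\mu_1$ itself, or rather that the $\{s,k_1\}$-component containing $k$ (which exists by (ii) since $k$ lies on $\omega$ between $k_1$ and $k_2$) lets Proposition~\ref{prop:compactible1} bridge $\mu_1$ to a marking supported near $k$; if $k$ is \emph{not} adjacent to $s$, then $k$ and $k_1$ lie in a common component of $S\setminus(\{s,k_1\}\cup\{s,k_1\}^\perp)$ — this is where (ii) is used, giving $k$ in the same component as $k_2$, but we actually need $k$ itself in that component, which holds because $k$ is on the subpath of $\omega$ from $k_1$ to $k_2$ and that subpath stays in one component — and Proposition~\ref{prop:compactible1} applies. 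In all cases we obtain $\Phi_s^{\mu_1}=\Phi_s^\nu$, and then we apply the inductive hypothesis to $\nu,\mu_2$ along the shorter path $\omega'=k\cdots k_2$, after checking that $\nu,\mu_2$ satisfy the three conditions relative to $\omega'$: (i) is inherited since the interior vertices of $\omega'$ are interior vertices of $\omega$; (ii) for $\nu$ holds because we built $\nu$ with the right support, or is vacuous if $k$ is not adjacent to $s$; (iii) for $\mu_2$ is inherited verbatim.

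The main obstacle I anticipate is the bookkeeping at the endpoints when they are not adjacent to $s$: in that degenerate case the $K^\mu_s$ are not of the form $J\setminus(\{s\}\cup\{s\}^\perp)$ for a large support but rather singleton markers or small sets, and one must be careful that condition (i) on interior vertices is exactly what licenses passing through a vertex $k$ adjacent to $s$ (it lets us jump from a marking supported ``on one side of $k$'' to one supported ``on the other side'', since both determine the same $\Phi_s$). Verifying that the component-membership hypotheses (ii) and (iii) propagate correctly to the truncated path at each step — in particular that a vertex on $\omega$ strictly between $k_1$ and $k_2$ always lies in the same component of $S\setminus(\{s,k_1\}\cup\{s,k_1\}^\perp)$ as $k_2$ whenever $k_1$ is adjacent to $s$ — is the one place where I would write out the argument carefully rather than citing a previous lemma, since it is the genuinely new input beyond Proposition~\ref{prop:compactible1}.
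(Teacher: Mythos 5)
Your overall scheme (induction on the length of $\omega$, bridging from one vertex to the next, and using condition (i) to cross interior vertices adjacent to $s$) is the same as the paper's, but there is a genuine gap in how you justify the bridging steps: both in the base case when an endpoint is not adjacent to $s$ and in the inductive step ``when $k_1$ is not adjacent to $s$'', you invoke Proposition~\ref{prop:compactible1} with $I=\{s\}$ (base $(s,\emptyset)$). That proposition requires that no irreducible spherical $I'\supsetneq I$ weakly separates $S$; for $I=\{s\}$ this would mean that no double $\{s,u\}$ weakly separates $S$, which is \emph{not} implied by $3$-rigidity and is exactly the situation Proposition~\ref{prop:same side2} is designed to handle --- if the hypothesis held for $I=\{s\}$, conditions (i)--(iii) would be superfluous and the whole statement would be an immediate consequence of Proposition~\ref{prop:compactible1}. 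Concretely, in your sub-case ``$k_1$ not adjacent to $s$, $k$ adjacent to $s$'', the equality $\Phi_s^{\mu_1}=\Phi_s^{\nu}$ for a marking $\nu$ with $K^{\nu}_{s,k}$ on the $k_2$-side does not follow from $k_1$ and $k$ lying in one component of $S\setminus(s\cup s^{\perp})$: the double $\{s,k\}$ may weakly separate $S$ with $k_1$ and $k_2$ on opposite sides, and crossing it genuinely requires condition (i), which your cited tool bypasses.

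The correct mechanism, as in the paper, is the elementary moves together with condition (i): if two consecutive relevant vertices are both non-adjacent to $s$, the markings $((s,\emptyset),k_1)$ and $((s,\emptyset),k)$ are related by move M1; if $k\sim s$ but $k_1\not\sim s$, pass through the intermediate marking $((s,k),k_1)$ via move M2 and then use condition (i) at $k$; if both $k_1,k\sim s$, use Remark~\ref{rem:find markings} (and FC) to get a marking supported on $\{s,k_1,k\}$, apply Proposition~\ref{prop:compactible1} with $I=\{s,k_1\}$ (legitimate, since $3$-rigidity excludes weakly separating $I'$ with $|I'|\geq 3$), and again finish with condition (i) at $k$ --- this is also where your phrase ``bridge $\mu_1$ to a marking supported near $k$'' needs to be made precise. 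A related omission: Proposition~\ref{prop:compactible1} only compares markings whose bases extend a common base with support $I$, so even in your legitimate applications with $I=\{s,k_1\}$ you need intermediate markings such as $((s,k_1),k_2)$ or $((s,k_1),k)$ together with move M2 (Lemma~\ref{moves do not change halfspace}), since $\mu_2$ or your $\nu$ need not contain $k_1$ in its support. With these repairs, your one-vertex-at-a-time induction (versus the paper's splitting of $\omega$ at an arbitrary interior vertex and inducting on both halves) does go through.
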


\begin{proof}
We proceed by induction of the length of $\omega$. Consider first the case where $k_1$ and $k_2$ are adjacent. If neither $k_1$ nor $k_2$ is adjacent to~$s$, then $\mu_1=((s,\emptyset), k_1), \mu_2=((s,\emptyset),k_2)$, and therefore $\mu_1\sim \mu_2$ by move~M1. If exactly one of $k_1,k_2$, say $k_1$, is adjacent to $s$, then let $\mu=((s,k_1),k_2)$. We have $\mu\sim \mu_2$ by move~M2. Moreover, $\mu_1\sim \mu$ by condition (ii) and Proposition~\ref{prop:compactible1}. If both $k_1,k_2$ are adjacent to~$s$, then let $\mu$ be a simple marking with
support containing $k_1,k_2$, and core $s$, which exists by Remark~\ref{rem:find markings} and FC. By conditions~(ii) and~(iii) and Proposition~\ref{prop:compactible1} we have $\mu_1\sim \mu\sim \mu_2$. Thus $\Phi_s^{\mu_1}=\Phi_s^{\mu_2}$ by Lemma~\ref{moves do not change halfspace}.

Now consider the case where $k_1$ and $k_2$ are not adjacent. Let $k$ be a vertex of $\omega$ distinct from $k_1,k_2$. Note that
if $k_1$ is adjacent to $s$, then all the vertices of $\omega$ except for $k_1$ are contained in $S\setminus (\{s,k_1\}\cup \{s\cup k_1\}^{\perp})$, and thus condition (ii) holds with $k_2$ replaced with $k$.

First suppose that $k$ is not adjacent to $s$. Then by the previous paragraph the pair of markings $\mu_1, \nu=((s,\emptyset),k)$ satisfies the hypotheses of the proposition and thus by induction we have $\Phi_s^{\mu_1}=\Phi_s^{\nu}$. Analogously, $\Phi_s^{\nu}=\Phi_s^{\mu_2}$. Finally, suppose that $k$ is adjacent to $s$.
For $i=1,2$ let $\nu_i$ be a simple marking with support containing $k$ and core $s$ such that $K^{\nu_i}_{s,k}$ lies in the same component of $S\setminus (\{s,k\}\cup \{s\cup k\}^{\perp})$ as $k_i$, which exists by Remark~\ref{rem:find markings}. As before, by induction we have $\Phi_s^{\mu_i}=\Phi_s^{\nu_i}$. Furthermore, $\Phi_s^{\nu_1}=\Phi_s^{\nu_2}$ by condition (i).
\end{proof}


\section{Good pairs}
\label{subsec:relative position}
Let $(W,S)$ be a Coxeter system. \textbf{Throughout the remaining part of the article, we will assume that all Coxeter generating sets twist equivalent to $S$ are 3-rigid}.

The following notion of a good element $t$ varies slightly from the one in \cite{HP}, where we allowed $r$ to be adjacent to $t$.

\begin{defn}
\label{def:good}
Let $L\subset S$ be irreducible spherical and let $r\in S$. An element \emph{$t\in L$ is good with respect to $r$}, if
\begin{itemize}
\item
$r\neq t$ and $r$ is not adjacent to $t$, and
\item
$L\setminus (t\cup t^{\perp})$ is non-empty and in the same component of $S\setminus(t\cup t^{\perp})$ as~$r$.
\end{itemize}
Note that being good depends on $L$. However, we often write shortly `$t$ is good with respect to $r$' (or even just `$t$ is good'), if $L$ (and $r$) are fixed. From the first bullet it follows that $r\in S\setminus L$.

A non-commuting pair \emph{$\{s,t\}\subset L$ is good with respect to $r$}, if
\begin{itemize}
\item
$\{s,t,r\}$ is not spherical, and
\item
$L\setminus (\{s,t\}\cup \{s,t\}^{\perp})$ is non-empty and in the same component of $S\setminus(\{s,t\}\cup \{s,t\}^{\perp})$ as $r$.
\end{itemize}
\end{defn}

The following lemma and its corollary exceptionally do not require the $3$-rigidity assumption on $S$.

\begin{lem}
\label{lem:stnotgood}
Let $\{s,t\}\subset S$ be spherical irreducible, and let $r\in S$ with $\{s,t,r\}$ not spherical.
Suppose that $s\in L=\{s,t\}$ is not good with respect to $r$ and $t\in L$ is not good with respect to $r$. Then $r$ lies in a component of $S\setminus (\{s,t\}\cup \{s,t\}^\perp)$ that has no element adjacent to $s$ or $t$.
\end{lem}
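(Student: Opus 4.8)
The plan is to argue by contradiction: suppose $r$ lies in a component $A$ of $S\setminus(\{s,t\}\cup\{s,t\}^\perp)$ that does contain some element adjacent to $s$ or to $t$, and derive that one of $s,t$ is good with respect to $r$. Since $\{s,t\}$ is irreducible, $L\setminus(t\cup t^\perp)$ is non-empty (it contains $s$), and likewise $L\setminus(s\cup s^\perp)$ is non-empty; so for the "good" definition of an element it only remains to check the component condition, while the first bullet ($r\neq t$, $r$ not adjacent to $t$, and similarly for $s$) holds because $r\in A\subseteq S\setminus(\{s,t\}\cup\{s,t\}^\perp)$, so $r$ is not adjacent to either $s$ or $t$ and is distinct from both.

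First I would analyze the relationship between the components of $S\setminus(\{s,t\}\cup\{s,t\}^\perp)$ and those of $S\setminus(t\cup t^\perp)$. The key observation is that $\{s,t\}\cup\{s,t\}^\perp \subseteq t\cup t^\perp \cup \{s\}\cup(\text{elements commuting with } s\text{ and }t)$; more precisely, every element of $\{s,t\}^\perp$ commutes with $t$, so $\{s,t\}\cup\{s,t\}^\perp \subseteq \{s\}\cup t\cup t^\perp$. Hence $S\setminus(t\cup t^\perp) \supseteq S\setminus(\{s,t\}\cup\{s,t\}^\perp)\setminus\{s\}$, and since $s\notin A$ (as $s\notin S\setminus(\{s,t\}\cup\{s,t\}^\perp)$), the whole component $A$ sits inside $S\setminus(t\cup t^\perp)$. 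Now suppose the element of $A$ adjacent to $s$ or $t$ — call it $a$ — is adjacent to $t$; then $a$ would lie in $t^\perp$ only if it commuted with $t$, but adjacency does not force commuting. I need to handle this: if $a$ is adjacent to $t$ but does not commute with $t$, then $\{s,t,a\}$ contains the pairwise-adjacent triple... actually here I would instead use that $a\notin t^\perp$ just means $a$ might not commute with $t$, which is fine — $a\in S\setminus(t\cup t^\perp)$ is exactly what I want, and then $a$ lies in some component $A'$ of $S\setminus(t\cup t^\perp)$. The claim is that this $A'$ contains $r$: indeed $A$ is connected in the defining graph of $S$ restricted to $S\setminus(\{s,t\}\cup\{s,t\}^\perp)$, and since $A\subseteq S\setminus(t\cup t^\perp)$, the set $A$ stays connected in the (larger) graph on $S\setminus(t\cup t^\perp)$, so $A$ lies in a single component $A'$ of $S\setminus(t\cup t^\perp)$ and $r\in A\subseteq A'$. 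Thus if $a$ is adjacent to $t$, then $L\setminus(t\cup t^\perp)=\{s\}$... wait — I must check that $s\in A'$, i.e.\ that $s$ and $r$ lie in the same component of $S\setminus(t\cup t^\perp)$. Since $a$ is adjacent to $t$ and... no: I instead use that $s$ is adjacent to $t$ (as $\{s,t\}$ is irreducible spherical, they do not commute) hence $s\in S\setminus(t\cup t^\perp)$, and $s$ is adjacent to $a$, and $a\in A'$, so $s\in A'$. Therefore $L\setminus(t\cup t^\perp)=\{s\}$ lies in the same component $A'$ of $S\setminus(t\cup t^\perp)$ as $r$, and since $\{s,t,r\}$ is not spherical we have $r\neq t$ and $r$ not adjacent to $t$ (again as $r\in A$); hence $t$ is good with respect to $r$, a contradiction. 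The symmetric case where $a$ is adjacent to $s$ gives that $s$ is good with respect to $r$ by the identical argument with the roles of $s$ and $t$ swapped.

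The main obstacle I anticipate is the bookkeeping around which elements of $\{s,t\}^\perp$ land where, and in particular making sure the connectivity of a component $A$ of the smaller complement $S\setminus(\{s,t\}\cup\{s,t\}^\perp)$ genuinely transfers to connectivity inside the larger complement $S\setminus(t\cup t^\perp)$ — this needs the containment $S\setminus(\{s,t\}\cup\{s,t\}^\perp)\setminus\{s\}\subseteq S\setminus(t\cup t^\perp)$, which follows from $\{s,t\}^\perp\subseteq t^\perp$, together with the fact that adding the single vertex $s$ (which is adjacent to $t$, hence outside $t\cup t^\perp$ need not hold... $s\in S\setminus(t\cup t^\perp)$ since $s\neq t$ and $s$ does not commute with $t$) only enlarges components. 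Once that transfer is in place, the "good" conditions reduce to exactly the hypotheses that were assumed to fail, giving the contradiction.
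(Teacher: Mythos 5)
There is a genuine gap, and it sits exactly at the point the lemma is designed to handle. Your key step is the inclusion $A\subseteq S\setminus(t\cup t^\perp)$, derived from ``$S\setminus(t\cup t^\perp)\supseteq S\setminus(\{s,t\}\cup\{s,t\}^\perp)\setminus\{s\}$''. The complementation goes the other way: from $\{s,t\}\cup\{s,t\}^\perp\subseteq\{s\}\cup\{t\}\cup t^\perp$ you only get $\bigl(S\setminus(t\cup t^\perp)\bigr)\setminus\{s\}\subseteq S\setminus(\{s,t\}\cup\{s,t\}^\perp)$, not the reverse. Concretely, an element $x$ of the component $A$ may commute with $t$ as long as it fails to commute with $s$ (then $x\notin\{s,t\}^\perp$ but $x\in t^\perp$), so a path inside $A$ from $r$ to $a$ can pass through $t^\perp$ (or, in the symmetric case, through $s^\perp$), and the connectivity of $A$ does not transfer to $S\setminus(t\cup t^\perp)$ (nor to $S\setminus(s\cup s^\perp)$). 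Within your proof by contradiction you cannot assume $A$ avoids $s^\perp\cup t^\perp$: that is essentially what has to be proved. There is also a case mix-up: assuming $a$ is adjacent to $t$, you invoke ``$s$ is adjacent to $a$'' to place $s$ in the component $A'$, but nothing gives adjacency of $a$ to $s$; with the cases matched correctly ($a$ adjacent to $t$ should feed into goodness of $s$ via $S\setminus(s\cup s^\perp)$), you still need the false inclusion. The paper's proof confronts the real difficulty directly: take a minimal-length path $\omega=r\cdots k$ in the defining graph outside $\{s,t\}\cup\{s,t\}^\perp$ with $k$ adjacent to, say, $t$; non-goodness of $s$ forces some vertex of $\omega$ into $s^\perp$, minimality forces that vertex to be $k$, and the symmetric argument for $t$ gives $k\in t^\perp$, whence $k\in\{s,t\}^\perp$, a contradiction.

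A second error: you claim that $r\in A\subseteq S\setminus(\{s,t\}\cup\{s,t\}^\perp)$ already gives that $r$ is adjacent to neither $s$ nor $t$. Lying outside $\{s,t\}^\perp$ only means $r$ fails to commute with at least one of $s,t$; adjacency in the defining graph means $m_{rs}<\infty$, so $r$ may very well be adjacent to one of them. Non-adjacency to both is needed to turn ``$s$ (resp.\ $t$) is not good'' into a failure of the component condition, and it must be argued: by FC and the hypothesis that $\{s,t,r\}$ is not spherical, $r$ is non-adjacent to at least one of $s,t$, say $s$; and if $r$ were adjacent to $t$, the edge from $r$ to $t$ would lie in $S\setminus(s\cup s^\perp)$ and $s$ would be good, so $r$ is non-adjacent to $t$ as well. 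As written, with the unproved non-adjacency and the false inclusion, the contradiction does not go through.
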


\begin{proof} By FC we have that $r$ is not adjacent to $s$ or not adjacent to~$t$. If $r$ is not adjacent to, say, $s$, then $r$ is not adjacent to $t$ (since $s$ is not good). For contradiction, suppose that $r$ lies in a component of $S\setminus (\{s,t\}\cup \{s,t\}^\perp)$ that has an element adjacent to $s$ or $t$. Let $\omega=r\cdots k$ be a minimal length path in the defining graph of $S$ outside $\{s,t\}\cup \{s,t\}^\perp$ ending with a vertex $k$ adjacent to $s$ or $t$, say $t$. Since $s$ is not good, there is a vertex $k'$ of $\omega$ that lies in $s^\perp$. By the minimality of $\omega$, we have $k'=k$, and hence $k$ is also adjacent to $s$. Analogously, since $t$ is not good, we have $k\in t^\perp$. Consequently, $k\in \{s,t\}^\perp$, which is a contradiction.
\end{proof}

Lemma~\ref{lem:stnotgood} immediately implies the following.

\begin{cor}
	\label{lem:good single}
Let $L\subset S$ be irreducible spherical and let $\{s,t\}\subset L$ be a non-commuting pair. Let $r\in S\setminus L$. If $\{s,t\}$ is good with respect to $r$, then $s$ or $t$ is good with respect to $r$.
\end{cor}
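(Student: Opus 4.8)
The plan is to read Lemma~\ref{lem:stnotgood} contrapositively, after first noting that whether an element of $L$ is good with respect to $r$ does not really depend on the ambient set $L$: for the given non-commuting pair $\{s,t\}$, I claim that $s$ is good with respect to $r$ as an element of $L$ (in the sense of Definition~\ref{def:good}) if and only if $s$ is good with respect to $r$ as an element of the irreducible spherical set $\{s,t\}$, and similarly for $t$.

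To verify this reduction, observe that since $\{s,t\}$ is non-commuting we have $t\neq s$ and $t\notin s^\perp$, so $\{s,t\}\setminus(s\cup s^\perp)=\{t\}$; also $t\in L\setminus(s\cup s^\perp)$. Because $L$ is spherical, all its elements are pairwise adjacent, so $L\setminus(s\cup s^\perp)$ is a clique of the defining graph of $S$ sitting inside $S\setminus(s\cup s^\perp)$, hence it lies in a single component of $S\setminus(s\cup s^\perp)$, namely the one containing $t$. Therefore the clause ``$L\setminus(s\cup s^\perp)$ is non-empty and in the same component of $S\setminus(s\cup s^\perp)$ as $r$'' is equivalent to the clause ``$\{s,t\}\setminus(s\cup s^\perp)$ is non-empty and in the same component as $r$''; since the remaining clause of Definition~\ref{def:good} ($r\neq s$ and $r$ not adjacent to $s$) is the same in both cases, the two notions of goodness for $s$ coincide, and likewise for $t$.

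Now assume, for contradiction, that $\{s,t\}$ is good with respect to $r$ but neither $s$ nor $t$ is good with respect to $r$ as an element of $L$; by the reduction just established, neither $s$ nor $t$ is good with respect to $r$ as an element of $\{s,t\}$. The first bullet of pair-goodness gives that $\{s,t,r\}$ is not spherical, so Lemma~\ref{lem:stnotgood} applies with its $L$ taken to be $\{s,t\}$, and produces a component $\mathcal{C}$ of $S\setminus(\{s,t\}\cup\{s,t\}^\perp)$ that contains $r$ and contains no element adjacent to $s$ or $t$. On the other hand, the second bullet of pair-goodness supplies an element $k\in L\setminus(\{s,t\}\cup\{s,t\}^\perp)$ lying in the component of $r$, i.e.\ in $\mathcal{C}$; and as $k$ and $s$ are distinct elements of the spherical set $L$, they are adjacent. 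This contradicts the choice of $\mathcal{C}$. The only step needing any care is the reduction in the second paragraph --- the elementary observation that a spherical subset induces a clique and so survives in one component after deleting the closed star of a vertex; granting that, the corollary is just the contrapositive of Lemma~\ref{lem:stnotgood}.
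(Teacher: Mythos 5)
Your proof is correct and follows the paper's intended route: the corollary is exactly the contrapositive of Lemma~\ref{lem:stnotgood} applied with $L$ replaced by $\{s,t\}$, using an element $k\in L\setminus(\{s,t\}\cup\{s,t\}^\perp)$ in the component of $r$ (adjacent to $s$ since $L$ is spherical) to get the contradiction. Your preliminary observation that goodness of $s$ (or $t$) does not depend on whether the ambient set is $L$ or $\{s,t\}$ — because $L\setminus(s\cup s^\perp)$ is a clique containing $t$ and hence lies in a single component — is a correct and welcome spelling-out of what the paper leaves implicit in the word ``immediately.''
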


\begin{lem}
	\label{lem:good pair jump 1}
Let $L\subset S$ be irreducible spherical and let $r\in S$. Let $s,t,p$ be consecutive vertices in the Coxeter--Dynkin diagram of $L$ with $\{s,t,p,r\}$ not spherical. If $\{s,t\}$ is not good with respect to $r$ and $\{t,p\}$ is not good with respect to~$r$, then none of the elements in $S\setminus(\{s,t,p\}\cup \{s,t,p\}^{\perp})$ are adjacent to $s$ or $p$.
\end{lem}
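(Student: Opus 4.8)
The plan is to reduce the statement to Lemma~\ref{lem:stnotgood}, which handles the two-element case, by running a minimal-path argument in the defining graph, but now with respect to the three-element set $M=\{s,t,p\}$ instead of a pair. First I would argue for contradiction: suppose some element of $S\setminus(M\cup M^\perp)$ is adjacent to $s$ or $p$, say to $p$ (the case of $s$ being symmetric by swapping the roles of $s$ and $p$, using that $\{t,p\}$ not good plays the role of $\{s,t\}$ not good). Then I would pick a minimal-length embedded path $\omega = r\cdots k$ in the defining graph of $S$ staying outside $M\cup M^\perp$ and ending at a vertex $k$ adjacent to $s$ or $p$. Minimality forces that no vertex of $\omega$ other than $k$ is adjacent to $s$ or $p$.

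Next I would exploit the hypotheses that $\{s,t\}$ and $\{t,p\}$ are not good with respect to $r$. The key observation is that $\omega$ lies outside $\{s,t\}\cup\{s,t\}^\perp$: indeed $\omega$ avoids $s$, $t$, and (being outside $M^\perp$) the common perpendicular $\{s,t,p\}^\perp$, but one must also check $\omega$ avoids the elements of $\{s,t\}^\perp$ that are adjacent to $p$ or equal to $p$ — and these are precisely excluded since $\omega$ avoids $p$ and avoids anything adjacent to $s$ except possibly $k$. So I need to be a little careful: either $\omega\setminus\{k\}$ lies outside $\{s,t\}\cup\{s,t\}^\perp$, or $k$ itself is the only possible offender. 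Treating $\omega$ (or $\omega$ truncated appropriately) as a path witnessing that $r$ and $k$ lie in the same component of $S\setminus(\{s,t\}\cup\{s,t\}^\perp)$, the failure of goodness of $\{s,t\}$ then forces (via Lemma~\ref{lem:stnotgood} applied to the pair $\{s,t\}$, noting $\{s,t,r\}\subseteq\{s,t,p,r\}$ is not spherical so the hypothesis there is met) that $k\in s^\perp\cap t^\perp$, in particular $k$ is adjacent to $t$; symmetrically, applying Lemma~\ref{lem:stnotgood} to $\{t,p\}$ gives $k\in t^\perp\cap p^\perp$. Combining, $k\in\{s,t,p\}^\perp = M^\perp$, contradicting $k\in S\setminus(M\cup M^\perp)$.

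The main obstacle I expect is bookkeeping the perpendicular sets correctly: a vertex of $\omega$ could a priori lie in $\{s,t\}^\perp\setminus M^\perp$ (commuting with $s$ and $t$ but not $p$), which would break the claim that $\omega$ is a legitimate path outside $\{s,t\}\cup\{s,t\}^\perp$. This is where minimality of $\omega$ and FC must be used: any interior vertex of $\omega$ adjacent to both $s$ and $t$ would, by FC, together with the vertex's neighbours along $\omega$, create a contradiction with minimality or with $\omega$ staying outside $M$; and the endpoint $k$ adjacent to $s$ (or $p$) is exactly the vertex we want to corner. So the careful move is: if $k$ is adjacent to $p$, consider the pair $\{t,p\}$ first to deduce $k\in t^\perp\cap p^\perp$, hence $k$ is adjacent to $s$ as well (lest $\{s,t,p,k\}$ violate FC while being pairwise... — actually deduce $k$ adjacent to $s$ directly), and then feed this back into the $\{s,t\}$ argument. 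I would also invoke, as in the proof of Lemma~\ref{lem:stnotgood}, that by FC and $\{s,t,p,r\}$ not spherical one of $s,t,p$ is non-adjacent to $r$, which anchors the start of the path-component reasoning. Once these adjacency/perpendicularity case distinctions are laid out cleanly, the contradiction $k\in M^\perp$ closes the proof.
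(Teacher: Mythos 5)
Your skeleton is the same as the paper's (a minimal-length path $\omega=r\cdots k$ outside $\{s,t,p\}\cup\{s,t,p\}^{\perp}$ ending at a vertex adjacent to $s$ or $p$, minimality confining any offender to $k$, forcing $k\in\{s,t\}^{\perp}$ and $k\in\{t,p\}^{\perp}$, hence $k\in\{s,t,p\}^{\perp}$, a contradiction), but two of your key justifications do not hold. First, the inference ``$\{s,t,r\}\subseteq\{s,t,p,r\}$ is not spherical'' is invalid: a subset of a non-spherical set can well be spherical, e.g.\ $r$ adjacent to both $s$ and $t$ but not to $p$. That both $\{s,t,r\}$ and $\{t,p,r\}$ are non-spherical is true under the hypotheses, but it requires an argument that cross-uses the other pair: if $r$ were adjacent to both $s$ and $t$, then by FC it is non-adjacent to $p$, so $\{t,p,r\}$ is non-spherical, and the edge $rs$ would exhibit $s\in L\setminus(\{t,p\}\cup\{t,p\}^{\perp})$ in the same component as $r$, making $\{t,p\}$ good --- a contradiction. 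The paper's opening sentence is exactly this step; without it, ``$\{s,t\}$ not good'' may hold vacuously (first bullet of Definition~\ref{def:good} failing) and yields no component information, so your deduction that $k\in\{s,t\}^{\perp}$ would have no basis.

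Second, Lemma~\ref{lem:stnotgood} is misapplied: its hypotheses concern the single elements $s$ and $t$ being not good with respect to $r$ (relative to $L=\{s,t\}$), and failure of goodness of the pair $\{s,t\}\subset L$ does not imply this --- Corollary~\ref{lem:good single} gives only the converse implication --- nor is its conclusion of the form ``$k\in s^{\perp}\cap t^{\perp}$''. The correct mechanism is the definition of pair-goodness used directly: if no vertex of $\omega$ lay in $\{s,t\}^{\perp}$, then appending the edge from $k$ to $p$ would show $p\in L\setminus(\{s,t\}\cup\{s,t\}^{\perp})$ lies in the same component as $r$, contradicting non-goodness; minimality then forces the offending vertex to be $k$. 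Note also that the two applications are not symmetric and their order matters: to contradict non-goodness of $\{t,p\}$ one must reach $s$ (the relevant element of $L\setminus(\{t,p\}\cup\{t,p\}^{\perp})$), and the only access is through the adjacency $k\in s^{\perp}$ obtained from the first application; your suggestion in the last paragraph to ``consider the pair $\{t,p\}$ first'' when $k$ is adjacent to $p$ gets stuck, since the path then ends at a vertex known to be adjacent only to $p\in\{t,p\}$. Finally, the existence of $\omega$ at all rests on the $3$-rigidity of $S$, which guarantees that $S\setminus(\{s,t,p\}\cup\{s,t,p\}^{\perp})$ is a single component containing both $r$ and the hypothesized element; this justification is missing from your plan.
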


\begin{proof} If $r$ is not adjacent to, say, $s$ or $t$, then since $\{s,t\}$ is not good with respect to $r$, we have that $r$ is also not adjacent to $p$. For contradiction, suppose that an element in $S\setminus(\{s,t,p\}\cup \{s,t,p\}^{\perp})$ is adjacent to $s$ or $p$. Since $S$ is 3-rigid, we have a minimal length path $\omega=r\cdots k$ in the defining graph of $S$ outside $\{s,t,p\}\cup \{s,t,p\}^{\perp}$ with $k$ adjacent to $s$ or $p$, say $p$. Since $\{s,t\}$ is not good, a vertex $k'$ of $\omega$ lies in $\{s,t\}^\perp$. Then $k'=k$ by the minimality of $\omega$. Thus $k\in \{s,t\}^\perp$. Analogously, since  $\{t,p\}$ is not good, a vertex of $\omega$ lies in $\{t,p\}^\perp$ giving $k\in\{t,p\}^\perp$. Thus $k\in\{s,t,p\}^\perp$, which is a contradiction.
\end{proof}


\begin{cor}
	\label{cor:good pair jump}
Let $L\subset S$ be irreducible spherical with $|L|\geq 4$ and let $r\in S$. Let $s,t,p$ be consecutive vertices in the Coxeter--Dynkin diagram of~$L$ with $\{s,t,p,r\}$ not spherical. Then at least one of $\{s,t\}$ or $\{t,p\}$ is good with respect to~$r$.
\end{cor}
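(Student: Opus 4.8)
The plan is to derive the corollary from Lemma~\ref{lem:good pair jump 1} by a contradiction argument, using the hypothesis $|L|\geq 4$ to produce an element of $L$ that is "trapped" in the wrong place. Suppose for contradiction that neither $\{s,t\}$ nor $\{t,p\}$ is good with respect to $r$. Then Lemma~\ref{lem:good pair jump 1} applies and tells us that no element of $S\setminus(\{s,t,p\}\cup\{s,t,p\}^\perp)$ is adjacent to $s$ or to $p$. The first step is to exploit $|L|\geq 4$: since $L$ is irreducible spherical and $\{s,t,p\}$ are consecutive in the Coxeter--Dynkin diagram of $L$, there is at least one further element $q\in L\setminus\{s,t,p\}$. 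Because $L$ is irreducible, $q$ cannot commute with all of $\{s,t,p\}$, so $q\in S\setminus(\{s,t,p\}\cup\{s,t,p\}^\perp)$.

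The second step is to locate $q$ relative to $s$ and $p$ using the structure of the Coxeter--Dynkin diagram of the irreducible spherical group $\langle L\rangle$. Such diagrams are paths or near-paths (types $A_n$, $B_n$, $D_n$, $E_n$, $F_4$, $H_n$, $I_2(m)$), and since $s,t,p$ are consecutive with $s,p$ the two ends of this length-two segment, any element $q$ of $L$ outside $\{s,t,p\}$ that is adjacent (in the Coxeter--Dynkin diagram, i.e. non-commuting) to something in $\{s,t,p\}$ must in fact be adjacent to $s$ or to $p$ — it cannot attach only to the middle vertex $t$ without creating a vertex of degree $\geq 3$ at $t$ together with a fourth neighbour, which the classification of finite Coxeter diagrams forbids unless $t$ is the branch vertex of a $D$ or $E$ diagram; in the latter case the branch still forces adjacency of $q$ to one of $s,p$ after relabelling, or one re-chooses $q$ to be an end vertex of the diagram. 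More robustly: pick $q$ to be an extremal vertex of the Coxeter--Dynkin diagram of $L$ other than (if possible) those already among $s,p$; then the unique path in the diagram from $q$ into $\{s,t,p\}$ enters at $s$ or at $p$, and in particular $q$ has a neighbour sequence reaching $s$ or $p$. Either way we exhibit an element of $L\setminus\{s,t,p\}\subseteq S\setminus(\{s,t,p\}\cup\{s,t,p\}^\perp)$ that is adjacent to $s$ or to $p$ — since adjacency in the defining graph is implied by non-commuting for elements of the spherical set $L$ — contradicting the conclusion of Lemma~\ref{lem:good pair jump 1}.

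The main obstacle I anticipate is the bookkeeping in the second step: one must be careful that "adjacent" throughout means adjacency in the defining graph (sphericity of the pair), whereas "consecutive in the Coxeter--Dynkin diagram" refers to non-commuting; since $L$ is spherical, every pair inside $L$ is adjacent in the defining graph, so the real content is purely about the non-commuting graph of the finite Coxeter group $\langle L\rangle$, and there one genuinely needs the classification (or at least the fact that finite Coxeter diagrams have no vertex lying on a cycle and at most one branch vertex) to guarantee that a fourth vertex cannot be hidden entirely behind $t$. An alternative that sidesteps the classification: use irreducibility of $L$ directly — if every element of $L\setminus\{s,t,p\}$ commuted with both $s$ and $p$, write $K=\{t\}\cup(L\setminus\{s,t,p\})^{?}$ appropriately and contradict the definition of irreducibility of $L$; this requires checking that the resulting partition of $L$ into $K\cup K^\perp$ is nontrivial and proper, which holds precisely because $|L|\geq 4$ guarantees $L\setminus\{s,t,p\}\neq\emptyset$ while $\{s,p\}\subseteq L\setminus\{t\}$ keeps $K$ proper. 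Once the contradiction with Lemma~\ref{lem:good pair jump 1} is reached, the corollary follows immediately.
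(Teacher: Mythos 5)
Your overall strategy is the paper's: argue by contradiction via Lemma~\ref{lem:good pair jump 1}, and use $|L|\geq 4$ together with irreducibility to produce an element of $L$ that violates its conclusion. Two points in your execution need attention. First, the assertion that an arbitrary $q\in L\setminus\{s,t,p\}$ ``cannot commute with all of $\{s,t,p\}$'' is false as stated (take $L$ of type $\mathrm{A}_6$ with $s,t,p$ the second, third and fourth vertices and $q$ the last vertex); what irreducibility gives is the \emph{existence} of some $q\in L\setminus(\{s,t,p\}\cup\{s,t,p\}^\perp)$, because $\{s,t,p\}$ is a non-empty proper subset of $L$ precisely when $|L|\geq 4$, so $L\not\subseteq\{s,t,p\}\cup\{s,t,p\}^\perp$. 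Second, and this is the real gap, your whole second step aims to find a $q$ that is non-commuting with $s$ or with $p$, and that aim cannot always be met: if $t$ is the branch vertex of a diagram of type $\mathrm{D}_4$ (or of $\mathrm{D}_n$, $\mathrm{E}_n$ with $s,p$ two of its neighbours), the remaining neighbour $q$ attaches only to $t$, and neither ``relabelling'' nor ``re-choosing $q$ to be an end vertex'' produces an element of $L\setminus\{s,t,p\}$ that is diagram-adjacent to $s$ or $p$.

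The fix is the observation you brush against in your parenthetical but do not exploit: the conclusion of Lemma~\ref{lem:good pair jump 1} is about adjacency in the \emph{defining} graph, and since $L$ is spherical, \emph{every} pair of elements of $L$ is spherical, hence adjacent in the defining graph, commuting or not. So any $q\in L\setminus(\{s,t,p\}\cup\{s,t,p\}^\perp)$ is automatically adjacent to $s$ (and to $p$), which contradicts the lemma immediately; no classification of finite Coxeter diagrams and no analysis of where $q$ attaches is needed. This is exactly the paper's two-line proof, and once you drop the hypothesis ``non-commuting'' from your statement that non-commuting elements of $L$ are adjacent in the defining graph, your argument collapses to it.
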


\begin{proof}
Let $q\in L\setminus (\{s,t,p\}\cup \{s,t,p\}^\perp)$. If $\{s,t\}$ is not good with respect to $r$ and $\{t,p\}$ is not good with respect to~$r$, then by Lemma~\ref{lem:good pair jump 1} we have that $q$ is not adjacent to $s$, which is a contradiction.
\end{proof}

We have also the following variant of Lemma~\ref{lem:good pair jump 1}.

\begin{lem}
	\label{lem:good pair jump new}
Let $L\subset S$ be irreducible spherical and let $r\in S$. Let $s,t,p$ be consecutive vertices in the Coxeter--Dynkin diagram of $L$ with $\{s,t,p,r\}$ not spherical. If $\{s,t\}$ is not good with respect to $r$ and $p$ is not good with respect to $r$, then none of the elements in $S\setminus(\{s,t,p\}\cup \{s,t,p\}^{\perp})$ are adjacent to $t$ or $p$.
\end{lem}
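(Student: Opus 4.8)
\textbf{Proof strategy for Lemma~\ref{lem:good pair jump new}.} The plan is to mimic the proof of Lemma~\ref{lem:good pair jump 1}, replacing the use of the non-goodness of $\{t,p\}$ by the non-goodness of the single element $p$. First I would set up the contradiction hypothesis: assume some element of $S\setminus(\{s,t,p\}\cup \{s,t,p\}^{\perp})$ is adjacent to $t$ or $p$. As in Lemma~\ref{lem:good pair jump 1}, I first want to reduce to the case where $r$ is non-adjacent to everything relevant; here we use FC together with the non-goodness of $\{s,t\}$ to conclude that if $r$ fails to be adjacent to one of $s,t$ then it is non-adjacent to $p$ as well (and, combined with the non-goodness of $p$, one controls adjacency to $t,p$ too). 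Since $S$ is $3$-rigid, we may then pick a minimal-length path $\omega=r\cdots k$ in the defining graph, lying outside $\{s,t,p\}\cup\{s,t,p\}^{\perp}$, whose terminal vertex $k$ is adjacent to $t$ or $p$.

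Next I would exploit the two non-goodness hypotheses along $\omega$. Because $\{s,t\}$ is not good with respect to $r$ and $\omega$ starts at $r$ but must (under our contradiction hypothesis) reach a vertex adjacent to $t$ or $p$, the path $\omega$ cannot stay in the component of $r$ in $S\setminus(\{s,t\}\cup\{s,t\}^{\perp})$ all the way while being ``active'' — so some vertex $k'$ of $\omega$ lies in $\{s,t\}^{\perp}$; minimality of $\omega$ forces $k'=k$, hence $k\in\{s,t\}^{\perp}$, in particular $k$ commutes with both $s$ and $t$. Since $k$ was assumed adjacent to $t$ or $p$ and $k$ commutes with $t$, we get that $k$ is adjacent to $p$. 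Now apply the non-goodness of $p$ with respect to $r$ in the same way: $\omega$ lies outside $p^{\perp}$ is false — rather, some vertex of $\omega$ lies in $p^{\perp}$, again forced by minimality to be $k$, so $k\in p^{\perp}$, i.e.\ $k$ commutes with $p$. Combining $k\in\{s,t\}^{\perp}$ with $k\in p^{\perp}$ gives $k\in\{s,t,p\}^{\perp}$, contradicting $k\notin\{s,t,p\}^{\perp}$.

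The one subtlety — and the step I expect to be the main obstacle — is handling the single-element hypothesis ``$p$ is not good with respect to $r$'' cleanly. Goodness of $p$ (Definition~\ref{def:good}) requires $r\neq p$, $r$ not adjacent to $p$, and $L\setminus(p\cup p^{\perp})$ nonempty and in the same component of $S\setminus(p\cup p^{\perp})$ as $r$; its failure therefore splits into cases (e.g.\ $r$ adjacent to $p$, or $r$ and $L\setminus(p\cup p^\perp)$ in different components of $S\setminus(p\cup p^{\perp})$, the latter meaning any path from $r$ to such a vertex hits $p^{\perp}$). I would dispose of the adjacency case first using FC and the non-goodness of $\{s,t\}$ (as in the opening reduction of Lemma~\ref{lem:good pair jump 1}), and in the remaining case argue that the path $\omega$, suitably extended by one step to a vertex of $L\setminus(\{s,t,p\}\cup\{s,t,p\}^{\perp})$ if necessary, witnesses a path from $r$ into the $r$-component issue and so must cross $p^{\perp}$; minimality then pins the crossing at $k$. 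Care is needed that the vertex $k$ adjacent to $t$ or $p$ genuinely forces $k$ adjacent to $p$ after we learn $k\in\{s,t\}^{\perp}$: if $k$ were adjacent to $t$ only, commuting-with-$t$ contradicts adjacency, so indeed $k$ is adjacent to $p$, and the argument closes.
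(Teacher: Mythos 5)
Your overall skeleton matches the paper's: reduce via FC and the two non-goodness hypotheses to the situation where $r$ is non-adjacent to $t$ and $p$, take a minimal-length path $\omega=r\cdots k$ outside $\{s,t,p\}\cup\{s,t,p\}^{\perp}$ ending at a vertex adjacent to $t$ or $p$, and force $k\in\{s,t\}^\perp\cap p^\perp=\{s,t,p\}^\perp$, a contradiction. The genuine gap is at your first step. You assert that, because $\omega$ reaches a vertex adjacent to $t$ or $p$, some vertex of $\omega$ lies in $\{s,t\}^\perp$. After the reduction, non-goodness of $\{s,t\}$ only says that $r$ lies in a different component of $S\setminus(\{s,t\}\cup\{s,t\}^\perp)$ than $p$ (and the rest of $L\setminus(\{s,t\}\cup\{s,t\}^\perp)$); to exploit it you need your path to reach that component, i.e.\ you need $k$ adjacent to $p$. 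If $k$ is adjacent to $t$ but not (yet known to be) adjacent to $p$, the extension of $\omega$ by $t$ just ends at $t\in\{s,t\}$ and yields no conflict with the component condition, so the claimed vertex $k'\in\{s,t\}^\perp$ is unjustified at that stage; the phrase about $\omega$ not staying in the component of $r$ ``while being active'' is not an argument. Your attempted patch --- ``if $k$ were adjacent to $t$ only, commuting-with-$t$ contradicts adjacency'' --- rests on a misreading of the conventions: adjacency here is adjacency in the defining graph, where commuting elements \emph{are} adjacent ($m=2$), so $k\in t^\perp$ is fully compatible with $k$ being adjacent to $t$ and gives no information about $p$.

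The repair is exactly the paper's case-dependent ordering of the two applications. If $k$ is adjacent to $t$, first use the non-goodness of $p$: extending $\omega$ by $t$ (note $t\in L\setminus(p\cup p^\perp)$) gives a path from $r$ to $t$, so some vertex of $\omega$ lies in $p^\perp$; such a vertex is adjacent to $p$, so minimality forces it to be $k$, whence $k\in p^\perp$ and in particular $k$ is adjacent to $p$. Only then apply the non-goodness of $\{s,t\}$ to $\omega$ extended by $p$ to get a vertex of $\omega$ in $\{s,t\}^\perp$, which by minimality is again $k$. If instead $k$ is adjacent to $p$, your order is the correct one (and once $k\in\{s,t\}^\perp$ it commutes with $t$, hence is adjacent to $t$, so the second step goes through). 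With this ordering your argument coincides with the paper's proof; as written, the fixed order plus the flawed patch leaves the case ``$k$ adjacent to $t$ only'' open.
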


\begin{proof}
Again, if $r$ is not adjacent to $s$ or $t$, then since $\{s,t\}$ is not good, we have that $r$ is also not adjacent to $p$. On the other hand, if $r$ is not adjacent to $p$, then since $p$ is not good, we have that $r$ is also not adjacent to $t$. For contradiction, suppose that an element in $S\setminus(\{s,t,p\}\cup \{s,t,p\}^{\perp})$ is adjacent to $t$ or~$p$. Since $S$ is 3-rigid, we have a minimal length path $\omega=r\cdots k$ in the defining graph of $S$ outside $\{s,t,p\}\cup \{s,t,p\}^{\perp}$ with $k$ adjacent to $t$ or~$p$, say~$t$ (the other case is similar). Since $p$ is not good, a vertex $k'$ of $\omega$ lies in $p^\perp$. Then $k'=k$ by the minimality of $\omega$. Thus $k\in p^\perp$. Analogously, since $\{s,t\}$ is not good, a vertex of $\omega$ lies in $\{s,t\}^\perp$ giving $k\in\{s,t\}^\perp$. Thus $k\in\{s,t,p\}^\perp$, which is a contradiction as before.
\end{proof}

\section{Fundamental domains for good pairs}
\label{sec:independent}

Let $S,S',W,\Dr$ and $\Da$ be as in Section~\ref{subsec:bases and markings}.
In this and the following section, we fix $L\subset S$ irreducible spherical.

\begin{defn}
\label{def:Delta}
Let $\mu=((s,w),m)$ be a marking with support contained in $L$. By $\Delta^\mu$ (or $\Delta^{(s,w),m}$) we denote the geometric fundamental domain for $L$ that is contained in $\Phi^\mu_s=\Phi(\W_s,w\W_m)$. Equivalently, by Lemma~\ref{lem:spherical same side}, it is the geometric fundamental domain for $L$ that is contained in $\Phi(w^{-1}\W_s,\W_m)$.
\end{defn}

Note that $\Delta^\mu$ depends on $L$ but we suppress this in the notation.

\begin{remark}
\label{rem:moves_delta}
By Lemma~\ref{moves do not change halfspace}, for $\mu\sim\nu$ we have $\Delta^\mu=\Delta^\nu$.
\end{remark}

\textbf{In the remaining part of the section, let $\{s,t\}\subset L$ be a non-commuting pair, and let $r\in S$ with $\{s,t,r\}$ not spherical.} In particular, $((s,t),r)$ and $((t,s),r)$ are both markings. Here is the main result of the section.

\begin{prop}
\label{prop:define side}
Suppose that $\{s,t\}$ is good with respect to $r$. Suppose that both $s$ and $t$ are good with respect to $r$. Then
$\Delta^{(t,s),r}=\Delta^{(s,t),r}$.
\end{prop}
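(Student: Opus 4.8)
The plan is to produce a single reflection wall that is ``seen'' in a compatible way from both of the walls $\W_s$ and $\W_t$, and then invoke the change-of-base style lemma (Lemma~\ref{lem:change of base}) to conclude that the geometric fundamental domains $\Delta^{(t,s),r}$ and $\Delta^{(s,t),r}$ are the same one. Concretely, since $\{s,t\}$ is good with respect to $r$, the set $L\setminus(\{s,t\}\cup\{s,t\}^\perp)$ is non-empty and lies in the same component of $S\setminus(\{s,t\}\cup\{s,t\}^\perp)$ as $r$; using Remark~\ref{rem:find markings} and Proposition~\ref{prop:compactible1}/Proposition~\ref{prop:same side2}, choose a simple marking $\mu=((s,w),m)$ with support contained in $L$, core $s$, and $K^\mu_{s,t}$ in that same component, so that $\mu$ is move-equivalent to $((s,t),r)$ (after an appropriate extension) and hence $\Delta^\mu=\Delta^{(s,t),r}$ by Remark~\ref{rem:moves_delta}; similarly on the $t$ side.

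The core of the argument is then to realize $\Delta^{(s,t),r}$ as the geometric fundamental domain determined, via $\{s,t\}$, by a single auxiliary wall. Let $J=\{s,t\}$ and note $J\cup\{r\}$ (or rather $J$ together with a suitable $r_1,r_2\in S\setminus J$, here both taken to be $r$) is geometric by Corollary~\ref{cor:spherical conjugate} applied appropriately (more precisely, $J$ is geometric, and one checks $J\cup\{r\}$ is geometric because $\{s,t,r\}$ is not spherical, so $\langle s,t,r\rangle$ is infinite and the relevant halfspaces can be chosen disjoint). Take $\W_1=\W_s$ reflected suitably — actually take $\W_1$ and $\W_2$ to be the walls $t\W_s$ and $s\W_t$ respectively (these are fixed by reflections in $\langle s,t\rangle=\langle J\rangle$), and observe that goodness of $t$ with respect to $r$ together with Corollary~\ref{cor:extra} (applied with $j_1=s,j_2=t$, using that $\W_r$ intersects $\W_s$ since $r$ is adjacent to... — here one must be careful and instead use Lemma~\ref{sublem:extra} with the wall $\W$ coming from the marking) gives the hypothesis $\Phi(\W_i,\Delta_i)=\Phi(\W_i,\W_{r_i})$ that Lemma~\ref{lem:change of base} needs. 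Then Lemma~\ref{lem:change of base} yields $\Delta_1=\Delta_2$, and identifying $\Delta_1$ with $\Delta^{(t,s),r}$ and $\Delta_2$ with $\Delta^{(s,t),r}$ (again via Lemma~\ref{lem:spherical same side} and Remark~\ref{rem:moves_delta}) finishes the proof.

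The step I expect to be the main obstacle is the bookkeeping that turns ``$s$ is good'' and ``$t$ is good'' into the precise disjointness-and-same-side inputs of Lemma~\ref{lem:change of base} and Lemma~\ref{sublem:extra}. In particular, one needs a wall $\W$ (a translate $w\W_m$ coming from a marking with support in the component of $r$) that is simultaneously disjoint from $\W_s$, from $t\W_s$, from $\W_t$, and from $s\W_t$, with the halfspace data matching on both sides; producing such a $\W$ and checking the four disjointness statements is where FC, 3-rigidity, and the earlier move lemmas (Lemma~\ref{moves do not change halfspace}, Proposition~\ref{prop:compactible1}) all get used at once. Once that wall is in hand, the passage through Lemma~\ref{lem:change of base} is formal. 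A secondary subtlety is that goodness of the pair $\{s,t\}$ (as opposed to just of $s$ and of $t$ individually) is what guarantees the relevant component of $S\setminus(\{s,t\}\cup\{s,t\}^\perp)$ is non-empty, so that markings with $K^\mu_{s,t}$ in that component exist at all — without this hypothesis the objects $\Delta^{(s,t),r}$ could fail to be comparable.
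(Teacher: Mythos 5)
Your plan has two genuine gaps. First, you invoke Lemma~\ref{lem:change of base} with $r_1=r_2=r$, which requires $\{s,t,r\}$ (equivalently $J\cup\{r_1,r_2\}$) to be geometric. But $\{s,t,r\}$ is not spherical, so Corollary~\ref{cor:spherical conjugate} gives nothing here, and geometricity of non-spherical subsets is precisely the kind of statement the whole paper is built to prove (via Corollary~\ref{cor:geometric criterion}); your parenthetical justification (``$\langle s,t,r\rangle$ is infinite, so the relevant halfspaces can be chosen disjoint'') does not produce a $2$-geometric collection and essentially begs the question.

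Second, and more centrally, the same-side hypothesis that Lemma~\ref{sublem:extra} needs is not a consequence of ``$s$ is good'' and ``$t$ is good'' when $r$ is adjacent to neither $s$ nor $t$. To run your argument with $j_1=t$, $j_2=s$ and $\W=\W_r$ (or a marking wall), you need $\Phi(\W_s,\W)=\Phi(\W_s,t\W)$, i.e.\ that markings of the form $((s,\emptyset),r)$ and $((s,t),r)$ determine the same halfspace for $s$. The only tool for that is Proposition~\ref{prop:compactible1} with $I=\{s\}$, whose hypothesis asks that no irreducible spherical $I'\supsetneq\{s\}$ weakly separates $S$; $3$-rigidity still allows weakly separating pairs $\{s,u\}$, so this is unavailable, and indeed this equality is essentially the self-compatibility/consistency statement that the later complexity sections are designed to establish. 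This is why the paper takes a different route: the only safe move-equivalences are those with $I=\{s,t\}$ (goodness of the pair), yielding $((s,t),r)\sim((s,tp),r)$ and $((t,s),r)\sim((t,sp),r)$ for a third vertex $p$ of $L$, after which Lemma~\ref{sublem:extra} is applied not in $S$ but in the conjugate generating set $tpSpt$ with $j_2=tst$, $j_1=p$ (Lemma~\ref{lem:figure45}), supplemented by the $\mathrm{F}_4$ analysis and the elementary-twist trick (Corollary~\ref{cor:figure45}, Lemma~\ref{lem:twisttrick}) when the extra move $((t,sp),r)\sim((t,spt),r)$ is blocked. Your sketch covers only the easy parts: $m_{st}=3$ is trivial ($s\W_t=t\W_s$), and the case $r$ adjacent to one of $s,t$ does go through Corollary~\ref{cor:extra} plus a move M2 as you suggest; the crux case $m_{st}\neq 3$ with $r$ adjacent to neither is where the proposal breaks down. (Also, goodness of the pair $\{s,t\}$ is not merely what makes the markings exist—they exist because $\{s,t,r\}$ is not spherical—it is what licenses the $I=\{s,t\}$ applications of Proposition~\ref{prop:compactible1} above.)
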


Proposition~\ref{prop:define side} makes the following notion well-defined.

\begin{defn}
Suppose that $\{s,t\}$ is good with respect to $r$. By Corollary~\ref{lem:good single}, at least one of $s,t$, say $s$, is good with respect to $r$. Then we define $\Delta^{\{s,t\},r}$ to be $\Delta^{(s,t),r}$.
\end{defn}

Proposition~\ref{prop:define side} follows immediately from the following.

\begin{prop}
\label{prop:switch base}
Suppose that both $\{s,t\}$ and $s$ are good with respect to $r$ and that there are consecutive vertices $s,t,p$ in the Coxeter--Dynkin diagram of $L$. Then $\Delta^{(t,s),r}=\Delta^{(s,t),r}$.
\end{prop}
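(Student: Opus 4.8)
The plan is to compare the two geometric fundamental domains $\Delta^{(s,t),r}$ and $\Delta^{(t,s),r}$ by passing through an intermediate marking supported on $\{s,t,p\}$ and exploiting that $p$ gives us an extra `direction' in which both bases can be read. Since $s,t,p$ are consecutive in the Coxeter--Dynkin diagram of $L$, the triple $\{s,t,p\}$ is irreducible spherical, so Remark~\ref{rem:unique} gives the simple base $(s,\{s,t,p\})$, which we may write as $(s,tp)$ (here $\{s,t\}$ and hence $\{s,t,p\}$ is irreducible, and $p$ is the `new' vertex attached at $t$). The first step is therefore to choose a marker $m\in S\setminus L$ making $((s,tp),m)$ and $((t,sp),m)$ both markings, with $m$ good for the relevant pairs; one takes $m=r$ if $\{s,t,p,r\}$ is not spherical, and otherwise produces a suitable $m$ via the path in the defining graph provided by goodness of $\{s,t\}$ and $s$. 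The key point of this step is to land in a situation where we may invoke Proposition~\ref{prop:compactible1} to identify $\Phi$-halfspaces, since $3$-rigidity forbids irreducible spherical subsets of size $\ge 3$ from weakly separating $S$.

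The second step is to show $\Delta^{(s,t),r}=\Delta^{(s,tp),m}$. By Remark~\ref{rem:moves_delta} it suffices to connect the two markings by moves, or more robustly to use Proposition~\ref{prop:compactible1} (or Proposition~\ref{prop:same side2}) with common core $s$: the marking $((s,t),r)$ has $K_{s}$-set inside the component of $S\setminus(s\cup s^\perp)$ containing $r$, and since $s$ is good with respect to $r$, the set $L\setminus(s\cup s^\perp)$ lies in that same component; the marking $((s,tp),m)$ has its $K_s$-set also in that component by the choice of $m$. Hence the two give the same $\Phi_s^\mu$, so the same $\Delta$. The third step is the symmetric statement with the roles of $s$ and $t$ swapped: $\Delta^{(t,s),r}=\Delta^{(t,sp),m}$. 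This uses that $\{s,t\}$ is good with respect to $r$ and $t$ is good with respect to $r$ — and here is where Corollary~\ref{lem:good single} and the goodness hypotheses must be combined carefully, because the proposition only assumes $s$ (not necessarily $t$) is good; one must either derive goodness of $t$ from the configuration $s,t,p$, or reroute the argument through $p$ using Lemma~\ref{lem:good pair jump new} / Corollary~\ref{cor:good pair jump}.

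The final step is to show $\Delta^{(s,tp),m}=\Delta^{(t,sp),m}$. Both of these bases have support exactly $\{s,t,p\}$, and their halfspaces $\Phi^{\mu}$ are determined by the same wall $m\W$-data through Definition~\ref{def:Delta}; the cleanest way is to observe that $(s,tp)$ and $(t,sp)$ are the two simple bases with support $\{s,t,p\}$ and cores $s,t$, and to apply Lemma~\ref{sublem:extra} or Corollary~\ref{cor:extra} to the irreducible spherical pair inside $\{s,t,p\}$ linking the two cores, transporting the condition $\Phi(\W_s,\cdot)=\Phi(\W_s,\Delta)$ to $\Phi(\W_t,\cdot)=\Phi(\W_t,\Delta)$. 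Concatenating the three equalities yields $\Delta^{(t,s),r}=\Delta^{(s,t),r}$. The main obstacle I expect is the bookkeeping in the second and third steps when $\{s,t,p,r\}$ happens to be spherical, so that $r$ itself is not an admissible marker supported outside $\{s,t,p\}$: in that case one must first replace $r$ by a vertex $k$ along a path provided by the goodness hypotheses, check that this replacement does not change $\Delta^{(s,t),r}$ or $\Delta^{(t,s),r}$ (another application of Proposition~\ref{prop:same side2}), and only then run the argument above — keeping straight which component of which link each intermediate marker lives in is the delicate part.
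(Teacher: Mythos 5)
There is a genuine gap, and it sits exactly at the step you call the ``cleanest'': comparing the two fundamental domains coming from the bases $(s,tp)$ and $(t,sp)$ with common support $\{s,t,p\}$. In the hard case (which the standing hypotheses allow: $m_{st}=4$ or $5$, and the marker not adjacent to $s$ or $t$), neither Corollary~\ref{cor:extra} nor Lemma~\ref{sublem:extra} applies to a pair inside $\{s,t,p\}$ in the way you suggest. Corollary~\ref{cor:extra} needs the wall $tp\W_m$ (or $sp\W_m$) to intersect $\W_s$ or $\W_t$, which fails precisely in this case; Lemma~\ref{sublem:extra} with, say, $j_1=t$, $j_2=s$, $\W=tp\W_m$ needs the hypothesis $\Phi(\W_s,tp\W_m)=\Phi(\W_s,\,t\cdot tp\W_m)=\Phi(\W_s,p\W_m)$, which is not available (indeed $(s,p)$ need not even be a base, since $p$ may commute with $s$), and even granting it, the conclusion concerns $\Phi(\W_t,\,stp\W_m)$ rather than the wall $sp\W_m$ you need. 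The paper's route through this point is Lemma~\ref{lem:figure45}: one first upgrades $((t,s),r)\sim((t,sp),r)$ to $((t,s),r)\sim((t,spt),r)$ -- using the \emph{non-simple} base $(t,spt)$, which is the whole reason non-simple bases appear in the article -- and then applies Lemma~\ref{sublem:extra} not inside $\{s,t,p\}$ but in the conjugated generating set $tpSpt$, with $j_2=tst$, $j_1=p$, exploiting $m_{tp}=3$ and the identity $tst\W_p=tps\W_t$. Moreover, obtaining $((t,sp),r)\sim((t,spt),r)$ is itself conditional: if the path from $r$ to $t$ provided by goodness of $s$ passes through a vertex $u$ adjacent to all of $s,t,p$, the moves break down and one needs the $\mathrm{F}_4$/twist argument (Lemma~\ref{lem:twisttrick} and Corollary~\ref{cor:figure45}), for which your proposal has no analogue. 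Without some substitute for these ingredients, your concatenation $\Delta^{(s,t),r}=\Delta^{(s,tp),m}=\Delta^{(t,sp),m}=\Delta^{(t,s),r}$ has an unproved middle equality, which is in fact the substance of the proposition.

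Two smaller remarks. First, your worry about $\{s,t,p,r\}$ being spherical is vacuous: goodness of the pair $\{s,t\}$ (and the standing assumption of the section) already forces $\{s,t,r\}$ to be non-spherical, so $r$ is an admissible marker for any base supported in $L$; the genuinely delicate case split is the one above, not this one. Second, be careful with your steps 2--3: Proposition~\ref{prop:compactible1} applied with $I=\{s\}$ is not justified by $3$-rigidity alone (a two-element irreducible spherical set may weakly separate $S$), and Proposition~\ref{prop:same side2} cannot be used as a fallback because its condition (i) is a consistency statement about intermediate doubles that is not yet available at this stage of the paper. The correct and sufficient move here is Proposition~\ref{prop:compactible1} with $I=\{s,t\}$ (legitimate by $3$-rigidity), using goodness of the pair $\{s,t\}$ to place $p$ and $r$ in the same component, which is exactly how the paper obtains $((s,t),r)\sim((s,tp),r)$ and $((t,s),r)\sim((t,sp),r)$.
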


In preparation for the proof of Proposition~\ref{prop:switch base} we discuss several lemmas. We will denote shortly
$\Delta_s=\Delta^{(s,t),r}, \Delta_t=\Delta^{(t,s),r}$.

\begin{lem}
\label{lem:figure45}
Suppose that $s,t,p$ are consecutive vertices in the Coxeter--Dynkin diagram of $L$, and $m_{st}\neq 3$.
Suppose also $((s,t),r)\sim ((s,tp),r)$ and $((t,s),r)\sim ((t,spt),r)$. Then $\Delta_t=\Delta_s$.
\end{lem}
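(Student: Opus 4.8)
The statement to prove is Lemma~\ref{lem:figure45}: if $s,t,p$ are consecutive in the Coxeter--Dynkin diagram of $L$, $m_{st}\neq 3$, and we have the chains of moves $((s,t),r)\sim((s,tp),r)$ and $((t,s),r)\sim((t,spt),r)$, then $\Delta_t=\Delta_s$. The plan is to exploit Remark~\ref{rem:moves_delta}, which says $\mu\sim\nu$ forces $\Delta^\mu=\Delta^\nu$. So the given moves immediately yield $\Delta_s=\Delta^{(s,tp),r}$ and $\Delta_t=\Delta^{(t,spt),r}$, and it suffices to show $\Delta^{(s,tp),r}=\Delta^{(t,spt),r}$. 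The point of passing to these longer bases is that the supports now both contain $p$, so the walls $\W_p$ (and its translates) become available as a pivot. I expect the core of the argument to be a comparison inside the rank-$2$ (or small-rank) residue determined by $\{s,t,p\}$, or more precisely a reduction to the base-change compatibility lemmas already proved: Lemma~\ref{sublem:extra}/Corollary~\ref{cor:extra} and Lemma~\ref{lem:change of base}.

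**Key steps.** First I would unwind the definitions: $\Delta^{(s,tp),r}$ is the geometric fundamental domain for $L$ lying in $\Phi(\W_s, tp\,\W_r)$, equivalently (by Lemma~\ref{lem:spherical same side}) in $\Phi((tp)^{-1}\W_s,\W_r)=\Phi(pt\,\W_s,\W_r)$; similarly $\Delta^{(t,spt),r}$ lies in $\Phi(tps\,\W_t,\W_r)$. Both are fundamental domains for $L$, so by Corollary~\ref{cor:spherical conjugate} there are exactly two of them, opposite across walls of $\langle L\rangle$; to identify them it is enough to compare which side of one well-chosen wall of $\langle L\rangle$ each lies on. The natural wall to use is $\W_p$: since $p$ is a vertex of $L$ adjacent to $t$ but (because $s,t,p$ are consecutive and $m_{st}\neq 3$ controls the local geometry) positioned so that $\{s,p\}$ and the relevant translates interact predictably, I would show $pt\,\W_s$ and $tps\,\W_t$ lie on a common, computable side relative to $\W_p$, using that inside the finite group $\langle s,t,p\rangle$ the element $w_{\{s,t,p\}}$ conjugates things in a way dictated by the Coxeter--Dynkin path. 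Then apply Lemma~\ref{lem:change of base} (with $J=L$, the role of the $r_i$ played by the relevant reflections, and $\W_1,\W_2$ taken among the walls of $\langle L\rangle$ crossing appropriately) to conclude the two fundamental domains coincide. The hypothesis "both $\{s,t\}$ and $s$ good with respect to $r$" (carried in from Proposition~\ref{prop:switch base}) is what guarantees $L\cup\{r\}$-type subsets are geometric and that $r$ sits in the right component, so that the invocations of Lemma~\ref{lem:change of base} are legitimate.

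**Main obstacle.** The delicate part will be the explicit rank-$3$ computation in $\langle s,t,p\rangle$: tracking how the words $tp$ and $spt$ move the walls $\W_s,\W_t$ and verifying that the resulting halfspaces agree across a single wall of $\langle L\rangle$. This is where the hypothesis $m_{st}\neq 3$ must enter — presumably the case $m_{st}=3$ is genuinely different (handled elsewhere, e.g. in Section~\ref{sec:big} where $m_{st}=3$ gets its own treatment), because when $m_{st}=3$ the conjugation by $w_J$ on the rank-$2$ piece $\langle s,t\rangle$ behaves differently and the words $tp$, $spt$ don't align the walls the same way. I would isolate this as a short combinatorial claim about the finite Coxeter group of type determined by $(m_{st}, m_{tp})$ with $m_{st}\geq 4$, prove it by a direct root-system or gallery argument, and feed it into the compatibility lemmas. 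A secondary bookkeeping hazard is making sure that the "two fundamental domains" language is used consistently (i.e. that we are always comparing $\Delta$'s for the \emph{same} $L$), but Remark~\ref{rem:moves_delta} and the fact that $\Delta^\mu$ depends only on $L$ and the side, not on the particular base, should keep this clean.
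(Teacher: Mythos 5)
Your reduction is fine as far as it goes: by Remark~\ref{rem:moves_delta} one has $\Delta_s=\Delta^{(s,tp),r}\subset\Phi(pt\W_s,\W_r)$ and $\Delta_t=\Delta^{(t,spt),r}\subset\Phi(tps\W_t,\W_r)$, and it does suffice to decide on which side of a single wall of $\langle L\rangle$ (namely $tps\W_t=tst\W_p$) the wall $\W_r$ lies relative to $\Delta_s$. But the two steps you propose to finish cannot work. First, Lemma~\ref{lem:change of base} is inapplicable: with $J=L$ and $r_1=r_2=r$ its hypothesis requires $L\cup\{r\}$ to be geometric, and nothing available gives this -- only spherical subsets are known to be geometric (Corollary~\ref{cor:spherical conjugate}), whereas here $\{s,t,r\}$, hence $L\cup\{r\}$, is not spherical; geometricity of such sets is essentially what the whole consistency argument is building towards, and the goodness hypotheses you invoke (which belong to Proposition~\ref{prop:switch base}, not to this lemma) do not yield it. Second, locating $pt\W_s$ and $tps\W_t$ relative to $\W_p$ is not the relevant quantity: every wall of a reflection in $\langle L\rangle$ contains the nonempty fixed-point set of $\langle L\rangle$, so these walls pairwise cross and no halfspace of one is contained in a halfspace of another. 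Consequently, knowing only that $\W_r$ lies on the $\Delta_s$-side of the single wall $pt\W_s$ (together with any amount of information about the mutual position of the walls themselves) can never determine on which side of $tps\W_t$ it lies.

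What is missing is a genuine use of the hypothesis $((s,t),r)\sim((s,tp),r)$ in its halfspace form $\Phi(\W_s,t\W_r)=\Phi(\W_s,tp\W_r)$, i.e.\ $\Phi(t\W_s,\W_r)=\Phi(t\W_s,p\W_r)$: it is the conjunction of the two constraints (same side of $t\W_s$ and of $pt\W_s$) that pins down the side of $tst\W_p$. The paper achieves this by applying Lemma~\ref{sublem:extra} in the generating set conjugated by $tp$, with $j_1=tptpt=p$ and $j_2=tpspt=tst$ (using $m_{tp}=3$ and $sp=ps$, forced by the classification of finite Coxeter groups) and $\W=\W_r$; the displayed equality is exactly the hypothesis of that lemma, and its conclusion, combined with $tst\W_p=tps\W_t$, the equivalence $((t,s),r)\sim((t,spt),r)$ and Remark~\ref{rem:moves_delta}, gives $\Delta_t=\Delta_s$. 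Alternatively, one could argue in the style of Lemmas~\ref{lem:Emu} and~\ref{lem:24cell}: both $\W_r$ and $\Delta_s$ lie in $U=tH\cap ptH$ for $H=\Phi(\W_s,t\W_r)$, and a root computation in $\langle s,t,p\rangle$ shows that $tps\alpha_t$ is a positive combination of $t\alpha_s$ and $pt\alpha_s$ precisely when $m_{st}>3$, so $U$ lies on one side of $tps\W_t$. Some step of this kind is indispensable, and your outline contains no substitute for it.
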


Note that it is easy to check that for $m_{st}\neq 3$ the pair $(t,spt)$ is indeed a base. Making use of this base and its extensions is exactly the reason for which we need to discuss in this article bases that are not simple.

\begin{figure}
\begin{center}
\includegraphics[width=0.4\textwidth]{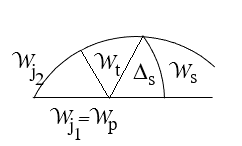}
\end{center}
\caption{}
\end{figure}

\begin{proof}
By the classification of finite Coxeter groups, we have $m_{tp}=3$.
We want to apply Lemma~\ref{sublem:extra} to the conjugate $tpSpt$, to $j_1, j_2$ the conjugates of $t,s$, so that $j_2=tpspt=tst, j_1=tptpt=p$, and to $\mathcal W=\mathcal W_r$. Since $((s,t),r)\sim ((s,tp),r)$, by Lemma~\ref{moves do not change halfspace} we have
$\Phi(\W_{j_2},\W)=\Phi(t\W_s,\W_r)=\Phi(t\W_{s},p\W_r)=\Phi(\W_{j_2},j_1\W)$, so the assumption of Lemma~\ref{sublem:extra} is satisfied. It is easy to see (Figure~1) that $\Delta_s$ lies in a geometric fundamental domain $F$ for $\{j_1,j_2\}$. By the definition of $\Delta_s$ we have $\Phi(\W_{j_2},\W)=\Phi(\W_{j_2},F)$, so by Lemma~\ref{sublem:extra} (and Lemma~\ref{lem:spherical same side}), we have $\Phi(j_2\W_{j_1},\W)=\Phi(j_2\W_{j_1},F)$. This implies $\Phi(tst\W_p,\W_r)=\Phi(tst\W_p,\Delta_s)$. Since $tst\W_p=tps\W_t$, and $((t,s),r)\sim ((t,spt),r)$,
Remark~\ref{rem:moves_delta} implies
$\Delta_t=\Delta_s$.
\end{proof}

\begin{cor}
\label{cor:figure45}
Suppose that $L=\{u,s,t,p\}\subset S$ is of type $\mathrm{F}_4$, and that $u,s,t,p$ are consecutive vertices in the Coxeter--Dynkin diagram of~$L$. Suppose also that $r$ is not adjacent to $s$ and
that both $\{s,t\}$ and $\{u,s\}$ are good with respect to $r$. Then $\Delta_t=\Delta_s$.
\end{cor}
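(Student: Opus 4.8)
The plan is to deduce this as a special case of Lemma~\ref{lem:figure45}, applied to the pair of consecutive vertices $s,t,p$ in the Coxeter--Dynkin diagram of $L$ (which here is of type $\mathrm{F}_4$, so $m_{st}=4$, $m_{tp}=3$, $m_{us}=3$, and in particular $m_{st}\neq 3$ as required). By Lemma~\ref{lem:figure45}, it suffices to verify the two move-relations $((s,t),r)\sim((s,tp),r)$ and $((t,s),r)\sim((t,spt),r)$. Both of these should follow from Proposition~\ref{prop:compactible1} together with the goodness hypotheses, using the structure of the $\mathrm{F}_4$ diagram and the assumption that $r$ is not adjacent to $s$.

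For the first relation, I would take the base $(s,t)$ with support $I=\{s,t\}$, and note that $((s,t),r)$ has $K_{s,t}=\{r\}$-side component the component of $S\setminus(\{s,t\}\cup\{s,t\}^\perp)$ containing $r$; since $\{s,t\}$ is good with respect to $r$, this component also contains $p$ (indeed $p\in L\setminus(\{s,t\}\cup\{s,t\}^\perp)$, which by goodness lies in the $r$-component). The marking $((s,tp),r)$ has support $\{s,t,p\}$, and its associated set $K=\{s,t,p\}\setminus(\{s,t\}\cup\{s,t\}^\perp)=\{p\}$, again in the $r$-component. Since $S$ is $3$-rigid, no irreducible spherical $I'\supsetneq I$ weakly separates $S$, so Proposition~\ref{prop:compactible1} applies and gives $((s,t),r)\sim((s,tp),r)$. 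For the second relation, I would instead use the base $(t,spt)$; one checks as in the remark following Lemma~\ref{lem:figure45} that $(t,spt)$ is a base (its support is $\{s,t,p\}$), and I would verify using Remark~\ref{rem:unique}--type reasoning and Condition~(ii) of Definition~\ref{domain} that $d((spt).c_0,\mathcal Y_t)=3$. Then both $((t,s),r)$ (support $\{s,t\}$, with $K_{s,t}$ the $r$-component, which contains $p$ by goodness of $\{s,t\}$, but here I actually want to route through $u$ or $p$) and $((t,spt),r)$ (support $\{s,t,p\}$) have their relevant $K$-sets in a common component of $S\setminus(\{t\}\cup t^\perp)$ or of $S\setminus(I'\cup I'^\perp)$ for the appropriate $I'$; Proposition~\ref{prop:compactible1} then yields the move-relation. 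The role of $u$ and of the hypothesis that $\{u,s\}$ is good with respect to $r$ — and that $r$ is not adjacent to $s$ — is to guarantee that the relevant sets genuinely sit in a single component so that the hypotheses of Proposition~\ref{prop:compactible1} are met; in particular, goodness of $\{u,s\}$ ensures that within $S\setminus(\{s,t,p\}\cup\{s,t,p\}^\perp)$ the vertex $r$ is connected to something adjacent to $t$, which is what we need to slide the base letter $p$ past.

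I expect the main obstacle to be the bookkeeping verifying that $(t,spt)$ is a base (i.e., Condition~(ii), the distance condition $d((spt).c_0,\mathcal Y_t)=3$) and, more delicately, confirming that the set $K$ attached to the marking $((t,spt),r)$ — namely $\{s,t,p\}\setminus(\{t\}\cup t^\perp)=\{s,p\}$, or its refinement relative to the correct weakly-separating candidate — lies in the same component as the $K$ attached to $((t,s),r)$. This is where the hypothesis that $r$ is not adjacent to $s$ is essential: it prevents $s$ from landing in $r^\perp$ and thereby keeps $\{s,p\}$ in the $r$-component of the relevant complement. Once both move-relations are in hand, Lemma~\ref{lem:figure45} closes the argument immediately, giving $\Delta_t=\Delta_s$.
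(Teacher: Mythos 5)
Your overall plan — reduce to Lemma~\ref{lem:figure45} by verifying the two move-relations — is the paper's plan, and your treatment of the first relation $((s,t),r)\sim((s,tp),r)$ is the same as the paper's: goodness of $\{s,t\}$ with respect to $r$ places $K_2=\{p,u\}$ (or $\{p\}$) and $r$ in one component of $S\setminus(\{s,t\}\cup\{s,t\}^\perp)$, and Proposition~\ref{prop:compactible1} applies because $3$-rigidity kills any weakly separating $I'\supsetneq\{s,t\}$.

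The gap is in the second relation $((t,s),r)\sim((t,spt),r)$, which you try to get from Proposition~\ref{prop:compactible1} alone. That proposition only relates markings $((t,ww_1),m_1)$ and $((t,ww_2),m_2)$ obtained from a common base $(t,w)$ with support $I$ by appending words $w_i$ in distinct elements of $J_i\setminus I$, i.e.\ by letters \emph{not already in the support}; since the core $t$ always lies in $I$, it can never append the letter $t$. Hence it cannot produce the non-simple base $(t,spt)$ from $((t,s),r)$ or $((t,sp),r)$ — this is precisely the point of the remark after Lemma~\ref{lem:figure45} that non-simple bases require separate work. The paper's proof does that work: from goodness of $\{s,t\}$ it gets $((t,s),r)\sim((t,spu),r)$; from goodness of $\{u,s\}$ it takes a minimal path $rr_1\cdots r_nt$ outside $\{u,s\}\cup\{u,s\}^\perp$, transports the marker along it by M1 moves (each $((t,spu),r_i)$ is a marking because $\mathrm{F}_4$ is maximal irreducible spherical), appends $t$ by an M2 move using that $r_n$ is adjacent to $t$, transports the marker back to $r$, rewrites $sput=sptu$ via $ut=tu$, and only then applies Proposition~\ref{prop:compactible1} once more to delete the letter $u$. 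Your appeal to ``$K$-sets in a common component of $S\setminus(\{t\}\cup t^\perp)$ or $S\setminus(I'\cup I'^\perp)$'' does not meet this obstruction, and your reading of the goodness of $\{u,s\}$ (as connecting $r$ to a neighbour of $t$ inside $S\setminus(\{s,t,p\}\cup\{s,t,p\}^\perp)$) is not what that hypothesis states — it yields a path avoiding $\{u,s\}\cup\{u,s\}^\perp$, which is exactly what the M1/M2 chain consumes. So the essential step of the corollary is missing from your proposal.
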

\begin{proof} Since $\{s,t\}$ is good, by Proposition~\ref{prop:compactible1} and the $3$-rigidity of~$S$ we have $((s,t),r)\sim ((s,tp),r)$ and $((t,s),r)\sim ((t,spu),r)$. Since $\{u,s\}$ is good, there is a minimal length path $rr_1\cdots r_nt$ in the defining graph of $S$ outside $\{u,s\}\cup \{u,s\}^\perp\supset \{u,s,p\}\cup \{u,s,t,p\}^\perp$. By the classification of finite Coxeter groups $\{u,s,t,p\}$ is maximal irreducible spherical. Thus using moves~M1 and~M2 we obtain
\begin{align*}
((t,spu),r)\sim &((t,spu),r_1)\sim \cdots \sim ((t,spu),r_n)\sim\\
&((t,sput),r_n) \sim \cdots \sim ((t,sput),r_1)\sim ((t,sput),r).
\end{align*}
By the 3-rigidity of $S$ and Proposition~\ref{prop:compactible1}
we have $((t,sput),r)=((t,sptu),r)\sim ((t,spt),r)$. Thus Lemma~\ref{lem:figure45} applies.
\end{proof}

\begin{lem}
\label{lem:twisttrick}
Suppose that $L=\{u,s,t,p\}$ and that $u,s,t,p$ are consecutive vertices in the Coxeter--Dynkin diagram of~$L$. Suppose that $\{s,t,r\}$ is not spherical.
Then $\{u,s\}$ or $\{t,p\}$ is good with respect to~$r$.
\end{lem}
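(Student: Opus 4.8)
The plan is to argue by contradiction, assuming that neither $\{u,s\}$ nor $\{t,p\}$ is good with respect to $r$, and then to extract enough adjacency constraints on $r$ and on the component of $r$ in the relevant complements to contradict the irreducibility of $L$. First I would record the structural facts about $L$: since $L=\{u,s,t,p\}$ is irreducible spherical with $u,s,t,p$ consecutive in the Coxeter--Dynkin diagram, $u$ commutes with $t$ and $p$, and $s$ commutes with $p$. In particular $\{u,s\}^\perp\cap L=\emptyset$ and $\{t,p\}^\perp\cap L=\emptyset$ (the only candidates $u$ for $\{t,p\}$ and $p$ for $\{u,s\}$ do not commute with both elements of the pair), so $L\setminus(\{u,s\}\cup\{u,s\}^\perp)\supseteq\{t,p\}$ and $L\setminus(\{t,p\}\cup\{t,p\}^\perp)\supseteq\{u,s\}$ are both non-empty; thus the obstruction to goodness must come entirely from the component condition or from $r$ being adjacent to (or spherical with) the pair.

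Next I would handle the hypothesis that $\{s,t,r\}$ is not spherical, which by type FC forces $r$ to be non-adjacent to $s$ or non-adjacent to $t$. The key step is then to run essentially the same minimal-path argument as in Lemma~\ref{lem:good pair jump 1} and Lemma~\ref{lem:good pair jump new}, now applied to the two pairs $\{u,s\}$ and $\{t,p\}$ simultaneously. Concretely: if $\{u,s\}$ is not good with respect to $r$, then (since $L\setminus(\{u,s\}\cup\{u,s\}^\perp)$ is non-empty) either $\{u,s,r\}$ is spherical or $r$ lies in a component of $S\setminus(\{u,s\}\cup\{u,s\}^\perp)$ not containing $t,p$; and symmetrically for $\{t,p\}$. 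I would show $\{u,s,r\}$ cannot be spherical — if it were, then $r$ is adjacent to $s$, hence (from the FC consequence above) $r$ is not adjacent to $t$, and then I would need to check that $\{t,p,r\}$ being spherical is also impossible or leads elsewhere; more cleanly, I would observe that $\{u,s,r\}$ spherical together with $\{s,t,r\}$ not spherical and $u\perp t$ already says $r$ is adjacent to $s$ but not $t$, and run the component argument on the $\{t,p\}$ side. So the real content is: $r$ is separated from $\{t,p\}$ in $S\setminus(\{u,s\}\cup\{u,s\}^\perp)$ and separated from $\{u,s\}$ in $S\setminus(\{t,p\}\cup\{t,p\}^\perp)$.

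With that set up, I would take a minimal-length embedded path $\omega=r\cdots k$ in the defining graph of $S$ ending at the first vertex $k$ adjacent to an element of $L$ (such a path exists because $S$ is irreducible and not spherical, using $3$-rigidity to keep $\omega$ embedded). By the $3$-rigidity of $S$ and the analogue of Lemma~\ref{lem:good pair jump 1}/\ref{lem:good pair jump new}, the failure of goodness of $\{u,s\}$ forces some vertex of $\omega$ — hence, by minimality, $k$ itself — to lie in $\{u,s\}^\perp$, i.e. $k$ commutes with both $u$ and $s$; likewise the failure of goodness of $\{t,p\}$ forces $k\in\{t,p\}^\perp$. Therefore $k$ commutes with all of $u,s,t,p$, so $k\in L^\perp$, contradicting that $k$ is adjacent to an element of $L$. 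The main obstacle I anticipate is the bookkeeping at the start of the path when $r$ itself is adjacent to some element of $L$: one must be careful to choose the path so that the "separated component" hypotheses apply to $\omega$ minus its first vertex, exactly as in the proofs of Lemmas~\ref{lem:good pair jump 1} and~\ref{lem:good pair jump new}, and to check that the FC consequence ($r$ non-adjacent to $s$ or to $t$) correctly propagates the non-adjacency to $u$ and to $p$ so that $r$ is genuinely non-adjacent to the whole of at least one of the two pairs — otherwise "not good" could be coming from sphericity rather than from separation, and that case needs the short direct argument sketched above.
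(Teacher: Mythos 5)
There is a genuine gap, and it is exactly at the point where this lemma earns its name. Your plan is to run the minimal-path argument of Lemmas~\ref{lem:good pair jump 1} and~\ref{lem:good pair jump new} directly in $S$: take a shortest path $\omega=r\cdots k$ to the first vertex $k$ adjacent to an element of $L$, argue that non-goodness of $\{u,s\}$ and of $\{t,p\}$ each force $k$ into the corresponding perp, and conclude $k\in L^\perp$, ``contradicting that $k$ is adjacent to an element of $L$.'' That final contradiction is not one: in this paper adjacency in the defining graph includes commuting, so every element of $L^\perp$ is adjacent to every element of $L$ (in Lemmas~\ref{lem:good pair jump 1} and~\ref{lem:good pair jump new} the contradiction was instead that $k$ landed in a perp the path had been \emph{chosen} to avoid). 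More seriously, the forcing step does not apply to your path. Non-goodness of $\{u,s\}$ (in the main case, where $r$ is adjacent to neither $s$ nor $t$) says only that $r$ and $t$ lie in different components of $S\setminus(\{u,s\}\cup\{u,s\}^\perp)$; to extract a vertex of $\omega$ in $\{u,s\}^\perp$ you would need $\omega$ to extend to a path from $r$ to $t$ avoiding $u,s$, i.e.\ you would need $k$ adjacent specifically to $t$. But no element of the component $B$ of $r$ in $S\setminus(\{u,s\}\cup\{u,s\}^\perp)$ is adjacent to $t$ at all (otherwise $t\in B$), so such a path must first cross $\{u,s\}\cup\{u,s\}^\perp$ --- the argument is circular. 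Symmetrically for $\{t,p\}$. Concretely, $3$-rigidity of $S$ applied to $\{s,t,p\}$ only produces a path from $r$ to $u$ avoiding $\{s,t,p\}\cup\{s,t,p\}^\perp$, and this is useless for $\{t,p\}$-goodness because $u\in\{t,p\}^\perp$; nothing in your outline converts it into the path from $r$ to $s$ avoiding $\{t,p\}\cup\{t,p\}^\perp$ that goodness of $\{t,p\}$ requires.

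The missing idea is the elementary twist, together with the standing hypothesis (which your proposal never uses) that \emph{all} Coxeter generating sets twist equivalent to $S$ are $3$-rigid, not just $S$ itself. The paper first disposes of the case where $r$ is adjacent to $s$ or $t$, assumes WLOG $m_{us}=3$, and, supposing $\{u,s\}$ is not good, twists the component $B$ of $r$ in $S\setminus(\{u,s\}\cup\{u,s\}^\perp)$ by $w_{us}$. Since $m_{us}=3$, conjugation by $w_{us}$ swaps $u$ and $s$, so in the twisted defining graph $\Gamma_\tau$ the attachment of $B$ to $u$ becomes an attachment to $s$. Applying $3$-rigidity of $S_\tau$ to $\{s,t,p\}$ gives a path in $\Gamma_\tau$ from $w_{us}rw_{us}^{-1}$ to $u$ outside $\{s,t,p\}\cup\{s,t,p\}^\perp$, which by minimality stays in the twisted copy of $B$ except possibly for one vertex of $\{u,s\}^\perp$ before $u$; conjugating back yields a path in the original graph from $r$ to $s$, which one then checks avoids $\{t,p\}\cup\{t,p\}^\perp$, so $\{t,p\}$ is good. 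Without this twist trick there is no apparent way to bridge from the connectivity statements that $3$-rigidity of $S$ provides to the specific component condition in the definition of a good pair, so the proposal as written does not prove the lemma.
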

\begin{proof}
If $r$ is adjacent to $s$ or $t$, then the lemma follows, so suppose otherwise. By the classification of finite Coxeter groups we can assume without loss of generality $m_{us}=3$. Suppose that $\{u,s\}$ is not good. Let $\Gamma_\tau$ be the defining graph of the Coxeter generating set~$S_\tau$ obtained by the elementary twist in $\langle u,s\rangle$ that conjugates by the longest word~$w_{us}$ in $\langle u,s\rangle$ all the elements of the component $B$ of $S\setminus (\{u,s\}\cup\{u,s\}^\perp)$ containing~$r$. Note that $t\notin B$ as $\{u,s\}$ is not good. Since $S_\tau$ is $3$-rigid, $\{s,t,p\}$ does not weakly separate~$S_\tau$. Consider then a minimal length path $\omega_\tau$ in~$\Gamma_\tau$ from $w_{us}rw_{us}^{-1}$ to~$u$ outside $\{s,p,t\}\cup \{s,p,t\}^\perp$. Note that by the minimality of $\omega_\tau$ all the vertices of $\omega_\tau$ are conjugates by $w_{us}$ of the elements in $B$, except for $u$ and possibly the vertex preceding $u$, which might be in $\{u,s\}^\perp$. Thus conjugating~$\omega_\tau$ back, we obtain a path $\omega$ from $r$ to $s$ in the defining graph of $S$, contained in $B\cup \{u,s\}^\perp\cup \{s\}$ and outside $\{u,p,t\}\cup \{u,s,p,t\}^\perp$. We claim that $\omega$ lies outside $\{t,p\}^\perp$ justifying that $\{t,p\}$ is good. Otherwise, let $k$ be the first vertex of $\omega$ in~$\{t,p\}^\perp$, and let $\omega_k$ be the subpath $r\cdots k$ of $\omega$. Since $t\notin B$, the path~$\omega_k$ must have a vertex in $\{u,s\}^\perp$. By the minimality of $\omega_\tau$, this must be the vertex $k$. Thus $k$ is a vertex of $\omega$ lying in $\{u,s,p,t\}^\perp$, which is a contradiction.
\end{proof}

\begin{proof}[Proof of Proposition~\ref{prop:switch base}]
Assume $m_{st}\neq 3$, since otherwise $s\mathcal W_t=t\mathcal W_s$ and so $\Delta_t=\Delta_s$ is immediate.

Suppose first that $r$ is adjacent to one of $s,t$, say $t$.
By definition we have $\Phi(\mathcal W_t,s\mathcal W_r)=
\Phi(\mathcal W_t,\Delta_t)$. Thus applying Corollary~\ref{cor:extra} with $j_1=t,j_2=s, F\supset\Delta_t$ we obtain $\Phi(\mathcal W_s,\mathcal W_r)=\Phi(\mathcal W_s,\Delta_t)$. As $((s,\emptyset),r)\sim ((s,t),r)$ (move M2), by Remark~\ref{rem:moves_delta} we have
$\Delta_t=\Delta_s$, as required.

It remains to consider the case where $r$ is adjacent neither to $s$ nor~$t$.
Since $\{s,t\}\subset L$ is good with respect to $r$, by Proposition~\ref{prop:compactible1} and the 3-rigidity of~$S$ we have $((s,t),r)\sim ((s,tp),r)$ and $((t,s),r)\sim ((t,sp),r)$. Since $s\in L$ is good with respect to $r$, there is a path in the defining graph of $S$ from $r$ to $t$ outside $s\cup s^\perp$. If for each vertex $u\neq t$ on this path the set $\{s,t,p,u\}$ is not spherical, then using moves~M1 and~M2 we have $((t,sp),r)\sim ((t,spt),r)$ and Lemma~\ref{lem:figure45} applies. Otherwise, if $u$ is a vertex of that path adjacent to all $s,t,p$, we are in the setup of Lemma~\ref{lem:twisttrick}, with $L$ replaced by $L'=\{u,s,t,p\}$.
Thus one of $\{u,s\},\{t,p\}\subset L'$ is good with respect to $r$. Note that $L'$ and $L$ both contain $s,t,p$, so we have that $\{s,t\}\subset L'$ is still good with respect to $r$. Then, possibly after interchanging $s$ with~$t$ and $u$ with $p$, Corollary~\ref{cor:figure45} applies, with $L'$ in place of $L$ (which is of type $\mathrm{F}_4$ by $m_{st}\neq 3$ and the classification of finite Coxeter groups). Thus for $\Delta'_s,\Delta'_t$ defined as $\Delta_s,\Delta_t$, with $L'$ in place of $L$, we have $\Delta'_t=\Delta'_s$. Since by definition $\Delta'_t$ and $\Delta_t$ (and analogously $\Delta'_s$ and~$\Delta_s$) are contained in the same geometric fundamental domain for $\{s,t\}$, we have $\Delta_t=\Delta_s$, as desired.
\end{proof}

\section{Independence of fundamental domains}
\label{sec:independent2}

This section is devoted to the proof of the following.

\begin{prop}
	\label{prop:consistent}
Let $L\subset S$ be irreducible spherical and $I\subset S$ be spherical. Suppose that $\{s,t\}$ and $\{p,q\}$ are non-commuting pairs in~$L$. Let $r,r'\in I$ be such that
$\{s,t\}\subset L$ is good with respect to $r$, and $\{p,q\}\subset L$ is good with respect to $r'$.
Then $\Delta^{\{s,t\},r}=\Delta^{\{p,q\},r'}$.
\end{prop}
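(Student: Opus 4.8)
The plan is to reduce the general statement to the case handled by Proposition~\ref{prop:define side} by a chain of comparisons, moving both the pair inside $L$ and the reflection inside $I$ one step at a time. First I would fix the pair $\{s,t\}$ and vary the reflection: I claim that if $\{s,t\}\subset L$ is good with respect to both $r$ and $r'$, then $\Delta^{\{s,t\},r}=\Delta^{\{s,t\},r'}$. By Corollary~\ref{lem:good single} we may assume $s$ is good with respect to $r$ (after possibly passing through an intermediate reflection, since goodness of a vertex is what we need for Proposition~\ref{prop:switch base}); then $\Delta^{\{s,t\},r}=\Delta^{(s,t),r}$ and similarly for $r'$. Since both $\{s,t\}$ and $s$ are good with respect to $r$, the set $L\setminus(\{s,t\}\cup\{s,t\}^\perp)$ and $L\setminus(s\cup s^\perp)$ both lie in the component of $r$ in the respective complements; because $r,r'\in I$ with $I$ spherical, and because $S$ is $3$-rigid, $r$ and $r'$ lie in the same component of $S\setminus(s\cup s^\perp)$ and of $S\setminus(\{s,t\}\cup\{s,t\}^\perp)$ (any path realising goodness passes through $L$, which is fixed). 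Hence $((s,t),r)\sim((s,t),r')$ by Proposition~\ref{prop:compactible1}, and Remark~\ref{rem:moves_delta} gives $\Delta^{(s,t),r}=\Delta^{(s,t),r'}$.

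Second, I would fix the reflection and vary the pair inside $L$: I claim that if $\{s,t\}$ and $\{p,q\}$ are both good with respect to the same $r$, then $\Delta^{\{s,t\},r}=\Delta^{\{p,q\},r}$. Here the idea is to walk through the Coxeter--Dynkin diagram of $L$ from the edge $st$ to the edge $pq$, one edge at a time. For two edges sharing a vertex, say $\{s,t\}$ and $\{t,p\}$ with $s,t,p$ consecutive, one checks that goodness of $\{s,t\}$ together with goodness of the single vertex $t$ (or $s$, or $p$, whichever is forced) lets us invoke Proposition~\ref{prop:switch base} (or its $m_{st}=3$ special case) to compare $\Delta^{(s,t),r}$, $\Delta^{(t,s),r}$, $\Delta^{(t,p),r}$, $\Delta^{(p,t),r}$; Corollary~\ref{cor:good pair jump} and Lemmas~\ref{lem:good pair jump 1}, \ref{lem:good pair jump new} guarantee that enough of the relevant pairs/vertices along the way are good with respect to $r$, so no link in the chain is missing. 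For edges not sharing a vertex one reduces to the shared-vertex case by composing steps, using that the diagram of an irreducible finite Coxeter group is a tree (in fact a path or a small tree), so there is a unique reduced edge-path between any two edges, and $3$-rigidity plus FC keep the relevant $\{s,t,r\}$-type sets non-spherical throughout.

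Combining the two reductions: given the data in the statement, $\{s,t\}$ is good with respect to $r\in I$ and $\{p,q\}$ is good with respect to $r'\in I$. Pick any reflection — for instance one may simply use $r$ throughout if $\{p,q\}$ happens to be good with respect to $r$; in general one needs a reflection with respect to which both pairs can be compared, and here the key point is that, since $I$ is spherical and $r,r'\in I$, every element of $I$ lies in the same component of the various complements $S\setminus(\bullet\cup\bullet^\perp)$ as both $r$ and $r'$ do, so goodness of a pair with respect to $r$ is equivalent to goodness with respect to $r'$ (this is really the content of the first reduction, stated at the level of $I$). So $\{p,q\}$ is good with respect to $r$ as well, and then the first reduction gives $\Delta^{\{p,q\},r'}=\Delta^{\{p,q\},r}$ while the second gives $\Delta^{\{s,t\},r}=\Delta^{\{p,q\},r}$, whence $\Delta^{\{s,t\},r}=\Delta^{\{p,q\},r'}$.

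The main obstacle I anticipate is the bookkeeping in the second reduction: making precise, for every pair of consecutive edges in the diagram of $L$ and every position of $r$, that the hypotheses of Proposition~\ref{prop:switch base} (both an edge and one of its endpoints good with respect to $r$, plus the existence of a consecutive third vertex) are actually met, rather than only the weaker hypotheses of its corollaries. This is exactly what Corollary~\ref{cor:good pair jump} and Lemmas~\ref{lem:good pair jump 1} and~\ref{lem:good pair jump new} are built to handle — they convert failure of goodness of the relevant pairs/vertices into a combinatorial contradiction with $q\in L$ being adjacent to one of $s,t,p$ — but assembling them into a single induction on the diagram distance, and checking the base case where $|L|\le 3$ separately (where one may not have a third consecutive vertex and must instead argue directly from $m_{st}=3$ or from $r$ being adjacent to $s$ or $t$), will require care. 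The first reduction, by contrast, is essentially a direct application of Proposition~\ref{prop:compactible1} once one observes that $I$ being spherical pins down a single component.
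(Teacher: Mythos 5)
Your overall architecture (reduce to a single reflection, then move the pair inside $L$) does not match how the proposition can actually be proved, and both reductions have real gaps. First, the bridge between $r$ and $r'$ is broken: you assert that, because $r,r'$ lie in the spherical set $I$, goodness of a pair with respect to $r$ is equivalent to goodness with respect to $r'$. Only the component condition transfers; the first bullet of Definition~\ref{def:good} also requires $\{p,q,r\}$ to be non-spherical, and nothing prevents $r$ from being adjacent to both $p$ and $q$ (so that $\{p,q,r\}$ is spherical and $\{p,q\}$ is \emph{not} good with respect to $r$) while $\{p,q\}$ is good with respect to $r'$. Since your whole assembly funnels both pairs through one common reflection, this is a load-bearing step, not a convenience. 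The paper never transfers goodness: by Remark~\ref{rem:Emu} (via Proposition~\ref{prop:switch base} and Proposition~\ref{prop:compactible1}) one has $\Delta^{\{s,t\},r}=\Delta^{(s_0,L),r}$ for a non-leaf core $s_0$, and the passage from $r$ to $r'$ happens at the level of the full-support marking: since $\langle r,r'\rangle$ is finite, $\W_r\cap\W_{r'}\neq\emptyset$, so $w\W_r$ and $w\W_{r'}$ lie on the same side of $\W_{s_0}$ and hence $\Delta^{(s_0,L),r}=\Delta^{(s_0,L),r'}$, with no goodness hypothesis involving both reflections.

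Second, your edge-walk inside $L$ needs every intermediate edge on the path to be good with respect to $r$, and Corollary~\ref{cor:good pair jump} and Lemmas~\ref{lem:good pair jump 1}, \ref{lem:good pair jump new} do not deliver that: they say that of two adjacent pairs at least one is good, or draw structural conclusions when goodness fails, but they cannot make the pair you must cross good. The genuine obstruction is $F_4$: with consecutive vertices $u,s,t,p$, the outer edges $\{u,s\}$ and $\{t,p\}$ can both be good with respect to $r$ while the middle edge $\{s,t\}$ (with $m_{st}=4$) is not, for instance when $r$ is adjacent to both $s$ and $t$; then Proposition~\ref{prop:switch base} is unavailable across the middle edge and your chain has a missing link. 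This is precisely the case the paper's Lemma~\ref{lem:Emu} handles by a different kind of argument: after establishing~(\ref{eq:walls1}) it compares the two candidate halfspaces $H_1,H_2$ of $\W_t$ and shows $H_1=H_2$ via the explicit $F_4$ root computation of Lemma~\ref{lem:24cell}, a geometric step with no combinatorial substitute in your outline. Relatedly, comparing $\Delta^{(t,s),r}$ with $\Delta^{(t,p),r}$ directly is not a legitimate use of Proposition~\ref{prop:compactible1} with support $\{t\}$, since $3$-rigidity does not exclude weakly separating \emph{pairs} containing $t$; the paper instead extends both markings to full support $L$, which again requires the goodness of the very pairs you are trying to cross.
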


The key to the proof is:

\begin{lem}
\label{lem:Emu}
Let $L\subset S$ be irreducible spherical with $|L|\geq 3$ and let $r\in S$ with $L\cup \{r\}$ not spherical. Consider $s\in L$ that is not a leaf of the Coxeter--Dynkin diagram of $L$. Let $\mu=((s,L),r)$.
Then $\Delta^\mu$ does not depend on $s$.
\end{lem}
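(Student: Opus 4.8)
The plan is to reduce to the case where $s$ and $t$ are non-leaves of the Coxeter--Dynkin diagram of $L$ that are adjacent in it, and then to run a base-switching argument in the spirit of Section~\ref{sec:independent}. By the classification of finite Coxeter groups the Coxeter--Dynkin diagram of $L$ is a tree; since $|L|\geq 3$ and $L$ is irreducible it has a vertex of degree $\geq 2$, and in any tree the vertices of degree $\geq 2$ span a connected subtree. Hence it suffices to treat $s,t$ adjacent non-leaves. Since $s$ is not a leaf, it has a Coxeter--Dynkin neighbour $u\neq t$; since $t$ is not a leaf, it has a Coxeter--Dynkin neighbour $p\neq s$. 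As the diagram is a tree, $u\neq p$, the elements $u,p$ commute, $u$ commutes with $t$, $p$ commutes with $s$, and $u,s,t,p$ are consecutive vertices of the Coxeter--Dynkin diagram of $L$. In particular $\{u,s,t,p\}$ is pairwise adjacent, hence irreducible spherical by type~$\mathrm{FC}$.

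\textbf{Passing to the component of the marker.} Since $S$ is $3$-rigid and $|L|\geq 3$, no irreducible spherical $I'\supsetneq L$ weakly separates $S$. So Proposition~\ref{prop:compactible1} applied with $I=L$ (where each $K_i$ is a singleton $\{m_i\}$) gives $((s,L),r)\sim((s,L),r')$ whenever $r,r'$ lie in a common component $A$ of $S\setminus(L\cup L^{\perp})$; by Remark~\ref{rem:moves_delta} this yields $\Delta^{((s,L),r)}=\Delta^{((s,L),r')}$, and likewise with core $t$. Thus $\Delta^{((s,L),r)}$ depends only on the component $A$ of $S\setminus(L\cup L^{\perp})$ containing $r$, and in proving $\Delta^{((s,L),r)}=\Delta^{((t,L),r)}$ we may replace $r$ by any marker in $A$.

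\textbf{From markings to a base-switch.} By Remark~\ref{rem:unique}, choose the bases $(s,L)$ and $(t,L)$ with a common tail: $w_s=t\,w'$ and $w_t=s\,w'$, where $w'$ is one ordering of $L\setminus\{s,t\}$ for which every partial set $\{s,t\}\cup\{\text{first }k\text{ letters of }w'\}$ is irreducible. By Definition~\ref{def:Delta}, $\Delta^{((s,L),r)}$ is the geometric fundamental domain for $L$ contained in $\Phi(\W_s, t w'\W_r)$ and $\Delta^{((t,L),r)}$ the one contained in $\Phi(\W_t, s w'\W_r)$; if $m_{st}=3$ then $s\W_t=t\W_s$ and these coincide, so assume $m_{st}\neq 3$. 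Writing $\W=w'\W_r$, the equality to be proved has exactly the form handled in Section~\ref{sec:independent} (compare Proposition~\ref{prop:switch base}), but with $\W_r$ replaced by the wall $\W$: the geometric fundamental domain for $L$ in $\Phi(\W_s,t\W)$ must equal the one in $\Phi(\W_t,s\W)$. I would derive this by the conjugation argument behind Lemma~\ref{lem:figure45} and Corollary~\ref{cor:figure45}: conjugate $\langle L\rangle$ by an appropriate element built from $w_s$ (respectively $w_t$) so that the images of $s$ and $t$ become consecutive vertices of a low-rank sub-configuration handled in Section~\ref{sec:independent}, and apply Lemma~\ref{sublem:extra} and Corollary~\ref{cor:extra} to the wall $\W$ (reducing the subcase where $r$ is adjacent to $s$ or $t$ to the other subcase via Corollary~\ref{cor:extra}, as at the start of the proof of Proposition~\ref{prop:switch base}). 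The goodness hypotheses needed for this — that enough of the pairs $\{u,s\},\{s,t\},\{t,p\}$ are good with respect to $r$ — come from Corollary~\ref{cor:good pair jump} together with Lemmas~\ref{lem:good pair jump 1} and~\ref{lem:good pair jump new}; this is exactly where the non-leaf hypothesis on $s$ and $t$ is used, through the auxiliary vertices $u$ and $p$.

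\textbf{Main obstacle.} The delicate point, and the reason this is not a quotation from Section~\ref{sec:independent}, is that the support of $\mu$ is all of $L$, so when $r$ is far from $L$ in the defining graph the base word cannot be shortened by moves; the conjugation argument must be carried out with the full words $w_s=t w'$ and $w_t=s w'$, which forces one to control the halfspace position of $\W=w'\W_r$ (not of $\W_r$ itself) relative to the conjugated walls of $\langle L\rangle$. One useful fact here is that $\W_r$, hence $\W$, is disjoint from the residue $C_L$ fixed by $\langle L\rangle$: if $\W_r$ met $C_L$ its trace there would be a wall of $\langle L\rangle$, forcing $r\in\langle L\rangle$ and contradicting $\W_r\cap\W_s=\emptyset$. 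Consequently $\W_r$ lies on one consistently determined side of every wall of $\langle L\rangle$. But this alone does not force the two geometric fundamental domains ``towards $r$'' across $w_s^{-1}\W_s$ and across $w_t^{-1}\W_t$ to agree — for arbitrary walls of $\langle L\rangle$ they need not — and it is only the combinatorial restrictions on where $\W_r$ can lie (via the goodness results above, which use that $s,t$ are non-leaves) that rule out the ``transverse'' positions and make the base-switch succeed. Carrying out the conjugation bookkeeping for the long words, together with this good-pair case analysis, is the bulk of the proof.
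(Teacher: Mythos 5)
Your reduction to adjacent non-leaves and the immediate dispatch of the case $m_{st}=3$ match the paper, and your observation that the remaining configuration is $u,s,t,p$ consecutive (necessarily of type $\mathrm{F}_4$ by the classification) is also how the paper proceeds. But there is a genuine gap in the remaining case: your entire base-switching plan runs through the goodness machinery (Proposition~\ref{prop:switch base}, Lemma~\ref{lem:figure45}, Corollary~\ref{cor:figure45}), and every one of those statements requires $\{s,t\}$ itself to be good with respect to the marker --- goodness of $\{s,t\}$ is what produces the move-equivalences $((s,t),r)\sim((s,tp),r)$ and $((t,s),r)\sim((t,sp\cdots),r)$ via Proposition~\ref{prop:compactible1}. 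When $\{s,t\}$ is \emph{not} good with respect to $r$ (and this cannot be repaired by replacing $r$ within its component of $S\setminus(L\cup L^\perp)$, since goodness of $\{s,t\}$ is obstructed by that component's structure), Corollary~\ref{cor:good pair jump} only gives you that $\{u,s\}$ or $\{t,p\}$ is good. That yields control of $\mathcal W_r$ relative to translates of $\mathcal W_s$ (namely $\mathcal W_r\subset u\Phi(\W_s,u\W_r)\cap ptu\Phi(\W_s,u\W_r)$), but it does not by itself determine on which side of the relevant translate of $\mathcal W_t$ the wall $\mathcal W_r$ lies, which is exactly what is needed to compare $\Delta^{(s,L),r}$ with $\Delta^{(t,L),r}$. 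Your closing sentence asserts that ``the combinatorial restrictions \ldots rule out the transverse positions,'' but no argument is given, and the combinatorial lemmas of Section~\ref{subsec:relative position} do not supply one.

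The paper closes precisely this case with an ingredient your proposal lacks: an explicit computation in the Tits representation of $\mathrm{F}_4$ (Lemma~\ref{lem:24cell}), showing $u\alpha_s+ptu\alpha_s=2\,usp\,\alpha_t$, so that the region $u H\cap ptuH$ meets only one of the two halfspaces $usp H^{\pm}_t$; combined with Corollary~\ref{cor:spherical conjugate} this forces $H_1=H_2$ and hence the equality of the two fundamental domains. Without this (or some substitute convexity/root argument), the non-good $\mathrm{F}_4$ case is unproved. Incidentally, the ``main obstacle'' you emphasize --- bookkeeping with the full base words $w_s=tw'$, $w_t=sw'$ and the wall $w'\W_r$ --- is not the real difficulty: by $3$-rigidity, Proposition~\ref{prop:compactible1} and Remark~\ref{rem:moves_delta} (as in Remark~\ref{rem:Emu}(i)), the long base is move-equivalent to a short one whenever the relevant pair is good, so the hard case is not long words but the failure of goodness for $\{s,t\}$.
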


Here $(s,L)$ denotes the unique
simple base $(s,w)$ with support $L$ and core~$s$ from Remark~\ref{rem:unique}.
Before we give the proof of Lemma~\ref{lem:Emu}, we record the following.

\begin{remark}
\label{rem:Emu}
Let $L\subset S$ be irreducible spherical and let $r\in S$. Suppose that $\{s,t\}\subset L$ is a non-commuting pair that is good with respect to $r$.
\begin{enumerate}[(i)]
\item
Let $\nu=((s,t),r)$. Since we can assume that $w$ above starts with~$t$, by the $3$-rigidity of $S$, Proposition~\ref{prop:compactible1}, and Remark~\ref{rem:moves_delta} we have $\Delta^\nu=\Delta^\mu$.
\item
Since $s\in L$ is not a leaf of the Coxeter--Dynkin diagram of $L$, by Proposition~\ref{prop:switch base} we have $\Delta^{\{s,t\},r}=\Delta^\nu$.
\end{enumerate}
\end{remark}

Note that Proposition~\ref{prop:consistent} follows immediately from Remark~\ref{rem:Emu} and Lemma~\ref{lem:Emu} since $\mathcal W_r\cap \mathcal W_{r'}\neq \emptyset$ and hence $\Delta^{(s,L),r}=\Delta^{(s,L),r'}$.

\begin{proof}[Proof of Lemma~\ref{lem:Emu}] Suppose that $t\in L$ is also not a leaf of the Coxeter--Dynkin diagram of $L$. If we have $m_{st}=3$, then $s\mathcal W_{t}=t\mathcal W_{s}$ and $\Delta^{(s,L),r}=\Delta^{(t,L),r}$ follows.
It remains to analyse the situation where $u,s,t,p$ are consecutive vertices in the Coxeter--Dynkin diagram of $L$ of type $F_4$. If $\{s,t\}$ is good, then we have $\Delta^{(s,L),r}=\Delta^{(t,L),r}$ by Proposition~\ref{prop:switch base} and Remark~\ref{rem:Emu}(i).

Suppose now that $\{s,t\}$ is not good. We claim that at least one of $\{u,s\},\{t,p\}$ is good. We first establish that $r$ is not adjacent to at least one of $u,p$. Indeed, if $\{s,t,r\}$ is not spherical, then since $\{s,t\}$ is not good we have that $r$ is neither adjacent to $u$ nor to $p$. If $\{s,t,r\}$ is spherical, then since $L\cup \{r\}$ is not spherical, $r$ is not adjacent to at least one of $u,p$, say $u$. Then by Corollary~\ref{cor:good pair jump} the pair $\{u,s\}$ is good, justifying the claim. In particular, we have
\begin{equation}
\label{eq:walls1}
((s,u),r)\sim ((s,ut),r)\sim ((s,utp),r),
\end{equation}
where $\sim$ follow from the assumptions that $\{u,s\}$ is good, that $S$ is 3-rigid and from Proposition~\ref{prop:compactible1}.

Let $\nu=((s,u),r)$.
Let $H_1=\Phi(\W_t,\Delta^\nu)$ and $H_2=\Phi(\W_t,psu\W_r)$. By Remark~\ref{rem:Emu}(i),
to prove $\Delta^{(s,L),r}=\Delta^{(t,L),r}$ it suffices to show $H_1=H_2$.

Let $H=\Phi(\W_s,u\W_r)$. By Equation~(\ref{eq:walls1}) and Lemma~\ref{moves do not change halfspace}, we have $\W_r\subset uH\cap ptuH=:U$. Thus $\W_r\subset U\cap usp H_2$. On the other hand, by Lemma~\ref{lem:spherical same side}, we have $\Delta^\nu\subset U\cap usp H_1$. Hence $H_1=H_2$ by Corollary~\ref{cor:spherical conjugate} and Lemma~\ref{lem:24cell} below.
\end{proof}

\begin{lem}
	\label{lem:24cell}
Suppose $W$ is a Coxeter group of type $F_4$ with $u,s,t,p$ consecutive vertices in its Coxeter--Dynkin diagram. Consider the Tits representation $W\acts \mathbb E^4$. Let $H^+_j$ and $H^{-}_j$ be the two open halfspaces in $\mathbb E^4$ bounded by the hyperplane fixed by a generator $j$. Let $U=uH^+_s\cap ptu H^+_s$.
Then one of $U\cap usp H^+_t$ and $U\cap usp H^-_t$ is empty.
\end{lem}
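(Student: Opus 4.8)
The plan is to work entirely inside the Tits cone of the $F_4$ Coxeter group acting on $\mathbb{E}^4$, and to reduce the claim to a concrete computation about four explicit linear functionals. First I would set up coordinates: write $e_u, e_s, e_t, e_p$ for the simple roots (with $m_{us}=m_{tp}=3$ and $m_{st}=4$), and let $f_j$ be the linear functional vanishing on the wall $H_j$ of the generator $j$, normalised so that $H^+_j=\{f_j>0\}$ is the halfspace containing the fundamental chamber. The set $U=uH^+_s\cap ptuH^+_s$ is then $\{u^{-1}\cdot x\in H^+_s\}\cap\{(ptu)^{-1}\cdot x\in H^+_s\}$, i.e.\ $U=\{g_1>0\}\cap\{g_2>0\}$ where $g_1=f_s\circ u^{-1}$ and $g_2=f_s\circ (ptu)^{-1}$ are the functionals cutting out the walls $u H_s$ and $ptu H_s$. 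Similarly $uspH^{\pm}_t=\{\pm h>0\}$ for $h=f_t\circ(usp)^{-1}$, the functional cutting out the wall $usp H_t$. So the statement ``one of $U\cap uspH^+_t$, $U\cap uspH^-_t$ is empty'' is precisely the assertion that on the open cone $\{g_1>0,\,g_2>0\}$ the functional $h$ has a constant sign — equivalently, that $h$ is a nonnegative (or nonpositive) linear combination $h=\alpha g_1+\beta g_2$ with $\alpha,\beta\ge 0$.

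The key geometric reason this should hold: in type $F_4$ the roots $\alpha_1=u^{-1}$-image of the $s$-root, $\alpha_2=(ptu)^{-1}$-image of the $s$-root, and $\beta=(usp)^{-1}$-image of the $t$-root are three roots of the (finite) $F_4$ root system, and I expect that $\beta$ lies in the cone spanned by $\alpha_1,\alpha_2$ — i.e.\ the three corresponding hyperplanes bound a common ``wedge''. So the main step is: (1) compute the three roots explicitly by applying the relevant words $u$, $ptu$, $usp$ to the simple roots $e_s$ and $e_t$ using the standard $F_4$ reflection formulas $j\cdot v = v - 2\frac{\langle v, e_j\rangle}{\langle e_j,e_j\rangle} e_j$; (2) check that the root $\beta$ is a nonnegative linear combination of $\alpha_1$ and $\alpha_2$; then (3) conclude that $h=\langle\,\cdot\,,\beta\rangle$ is a nonnegative combination of $g_1=\langle\,\cdot\,,\alpha_1\rangle$ and $g_2=\langle\,\cdot\,,\alpha_2\rangle$ up to positive scalars (using that the Tits form pairs roots the right way, and that the chosen halfspace orientations are the ones containing the fundamental chamber), which forces $h$ to be sign-definite on $U=\{g_1>0\}\cap\{g_2>0\}$. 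This is exactly the picture the authors refer to in Figure~1: the walls $uH_s$, $ptuH_s$, $uspH_t$ are three walls of a rank-two residue (a dihedral sub-configuration inside $F_4$), and $U$ is the ``outside wedge'' on which the third wall does not enter.

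Concretely, I would normalise the $F_4$ simple roots so that $u,p$ are long and $s,t$ are short (or the reverse — I'd pick whichever makes the Cartan integers cleanest), record the $4\times 4$ Cartan matrix, and then just track the three roots through the words. The words are short: $u$ acts once, $ptu$ three times, $usp$ three times, so each root is obtained by at most three reflections of a simple root. I would tabulate $u\cdot e_s$, then $t u\cdot e_s$, then $pt u\cdot e_s$; and $p\cdot e_t$, then $sp\cdot e_t$, then $usp\cdot e_t$. Having these three vectors, verifying $\beta=\alpha e_1$-type-combination $=\alpha\alpha_1+\beta\alpha_2$ with $\alpha,\beta\ge0$ is one linear-algebra check, and verifying the sign conventions for $H^\pm$ amounts to checking each functional is positive at a fixed interior point (say the sum of fundamental coweights).

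The hard part will be step (2): choosing the right point of view so that the nonnegativity of the combination is transparent rather than a brute coordinate miracle. The cleanest route is probably to recognise the relevant rank-two configuration: the reflections across $uH_s$ and $ptuH_s$ generate a finite dihedral group, $uspH_t$ is a third wall either inside or outside the two chambers cut out by them, and one shows it is the latter — i.e.\ the wall $uspH_t$ is ``beyond'' the wedge $U$. If this structural description is confirmed by the explicit vectors, the emptiness of one of $U\cap uspH^\pm_t$ is immediate. Should the direct computation be messy, a fallback is to apply a single isometry of $\mathbb{E}^4$ (an element of $W$) carrying $U$ to a standard positive wedge between two coordinate-type hyperplanes, reducing the inequality to inspection; but I expect the three-root computation to close it directly, and it is finite and mechanical once the $F_4$ Cartan matrix is fixed.
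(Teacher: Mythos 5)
Your plan is correct and is essentially the paper's own proof: the paper computes the three roots $u\alpha_s$, $ptu\alpha_s$, $usp\alpha_t$ explicitly in the Tits representation and observes $u\alpha_s+ptu\alpha_s=2\,usp\,\alpha_t$, so the third functional is a nonnegative combination of the two cutting out $U$ and hence sign-definite there, exactly your steps (1)--(3). The only thing left in your write-up is to carry out the mechanical root computation, which does close as you expect (with coefficients $\tfrac12,\tfrac12$).
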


\begin{proof}
The simple roots associated to $u,s,t,p$ are $\alpha_u=(1,-1,0,0), \alpha_s=(0,1,-1,0),\alpha_t=(0,0,1,0)$ and $\alpha_p=(-\frac{1}{2},-\frac{1}{2},-\frac{1}{2},-\frac{1}{2})$.
One computes directly $u\alpha_s=\alpha_u+\alpha_s, tu\alpha_s=\alpha_u+\alpha_s+2\alpha_t, ptu\alpha_s=\alpha_u+\alpha_s+2\alpha_t+2\alpha_p$. Moreover, $p\alpha_t=\alpha_t+\alpha_p,sp\alpha_t=\alpha_s+\alpha_t+\alpha_p$, and $usp\alpha_t=\alpha_u+\alpha_s+\alpha_t+\alpha_p$. Note that $u\alpha_s+ptu\alpha_s=2usp\alpha_t$. Thus for any vector $v\in \mathbb E^4$, if $\langle v, u\alpha_s\rangle>0$ and $\langle v, ptu\alpha_s\rangle>0$, then $\langle v, usp\alpha_t\rangle>0$, as desired.
\end{proof}

\section{Complexity}
\label{sec:complexity}

In this section, we introduce the complexity of the Coxeter generating set $S$ with respect to~$S'$. This extends the ideas of \cite[\S6]{HP}. We keep the setup from Section~\ref{sec:independent}. To start with, we need to distinguish particular spherical subsets.

\begin{defn}
\label{def:goodJ}
Let $L\subset S$ be irreducible spherical. $L$ is \emph{exposed} if
$|L|\leq 2$ or $|L|=3$ and there are at least two elements of $L$ not adjacent to any element of $S\setminus (L\cup L^\perp)$.
\end{defn}

Here are several criteria for identifying exposed $L$.

\begin{lem}
\label{lem:exposed_criteria}
Let $L\subset S$ be irreducible spherical, and let $r\in S$ with $L\cup \{r\}$ not spherical. Suppose that each non-commuting pair $\{s,t\}\subset L$ is not good with respect to $r$. Then $L$ is exposed.
\end{lem}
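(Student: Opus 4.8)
The plan is to argue by contradiction using the structure of finite irreducible Coxeter groups together with the goodness criteria already available, especially Corollary~\ref{cor:good pair jump} and Lemma~\ref{lem:twisttrick}. Suppose $L$ is not exposed. Since $L\cup\{r\}$ is not spherical, $L$ is non-empty; and since every non-commuting pair in $L$ would have to be defined, $|L|\geq 2$. If $|L|\leq 2$ then $L$ is automatically exposed, so $|L|\geq 3$. First I would dispose of the case $|L|=3$: write $L=\{s,t,p\}$ (relabelling so that $\{s,t\}$ and $\{t,p\}$ are the non-commuting pairs, i.e.\ $t$ is the central vertex of the Coxeter--Dynkin diagram of $L$; note $\{s,p\}$ might or might not commute, but if $L$ is irreducible at least two of the three pairs are non-commuting). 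Since $L$ is not exposed, at least two of $s,t,p$ are adjacent to some element of $S\setminus(L\cup L^\perp)$. I would then invoke Lemma~\ref{lem:good pair jump 1} (with $s,t,p$ consecutive): if both $\{s,t\}$ and $\{t,p\}$ are not good with respect to $r$ and $\{s,t,p,r\}$ is not spherical — which holds here because $\{s,t,p\}\cup\{r\}=L\cup\{r\}$ is not spherical — then no element of $S\setminus(\{s,t,p\}\cup\{s,t,p\}^\perp)=S\setminus(L\cup L^\perp)$ is adjacent to $s$ or $p$. That forces the only vertex of $L$ possibly adjacent to $S\setminus(L\cup L^\perp)$ to be $t$, so at most one element of $L$ is so adjacent, contradicting non-exposedness. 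This settles $|L|=3$.

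Next I would handle $|L|\geq 4$. The classification of finite irreducible Coxeter groups shows that the Coxeter--Dynkin diagram of $L$ is a path or (for $D_n$, $E_n$) a path with one extra pendant edge at a near-end vertex; in every case one can choose three consecutive vertices $s,t,p$ with $\{s,t,p,r\}$ not spherical (again since $L\cup\{r\}$ is not spherical, any such triple works, as a triple is spherical iff the whole rank-$3$ parabolic is, and we just need the rank-$4$-or-more set $L\cup\{r\}$ to be non-spherical — more carefully, I would pick the three consecutive vertices so that they, together with $r$, are not spherical, which is possible because otherwise adding $r$ to any rank-$3$ sub-path stays spherical, and by connectivity of the diagram this would propagate to give $L\cup\{r\}$ spherical). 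By hypothesis $\{s,t\}$ and $\{t,p\}$ are not good with respect to $r$, so Corollary~\ref{cor:good pair jump} is contradicted directly: it asserts that for $|L|\geq 4$ at least one of $\{s,t\},\{t,p\}$ must be good. Hence the case $|L|\geq 4$ cannot occur at all under the hypothesis that no non-commuting pair is good, which in particular makes $L$ vacuously exposed (or rather, shows the hypothesis is never met for $|L|\geq 4$).

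The step I expect to be the main obstacle is the careful bookkeeping in the $|L|=3$ case: one must correctly track which pairs in $L$ are non-commuting (the irreducibility of $L$ guarantees at least two non-commuting pairs, and one must apply Lemma~\ref{lem:good pair jump 1} to a genuinely consecutive triple $s,t,p$ in the Coxeter--Dynkin diagram), and one must be sure that $\{s,t,p,r\}$ is not spherical — which is exactly $L\cup\{r\}$ not spherical — so that the hypotheses of Lemma~\ref{lem:good pair jump 1} are met. A secondary subtlety is verifying, in the $|L|\geq 4$ step, that one can always select consecutive vertices $s,t,p$ with $\{s,t,p,r\}$ not spherical; this follows from $L\cup\{r\}$ not spherical by considering a nested chain of parabolics, but it deserves an explicit sentence. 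Once these points are in place, the conclusion is immediate from the cited lemmas, so there are no substantial computations to carry out.
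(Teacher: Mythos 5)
Your overall route coincides with the paper's: $|L|\le 2$ is exposed by definition, the case $|L|=3$ follows from Lemma~\ref{lem:good pair jump 1} applied to the whole of $L$ (where indeed $\{s,t,p,r\}=L\cup\{r\}$ is non-spherical by hypothesis, so that part of your argument is fine), and $|L|\ge 4$ is ruled out by Corollary~\ref{cor:good pair jump}. The one place where your write-up is not yet a proof is the selection, for $|L|\ge 4$, of a consecutive triple $s,t,p$ with $\{s,t,p,r\}$ not spherical. Your first assertion that ``any such triple works'' is false: a proper subset of the non-spherical set $L\cup\{r\}$ can perfectly well be spherical. Your fallback claim --- that otherwise every rank-$3$ sub-path together with $r$ is spherical and ``by connectivity of the diagram this would propagate to give $L\cup\{r\}$ spherical'' --- is not a valid propagation by connectivity alone. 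For example, if $L$ is a path $a$--$b$--$c$--$d$ of type $\mathrm{A}_4$ and $r$ is joined to $a$ and to $d$ by label-$3$ edges while commuting with $b$ and $c$, then both $\{a,b,c,r\}$ and $\{b,c,d,r\}$ are of type $\mathrm{A}_4$, yet $L\cup\{r\}$ is affine of type $\widetilde{\mathrm{A}}_4$.

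What rescues the claim is the standing hypothesis that $S$ is of type $\mathrm{FC}$, which you never invoke at this step. If every consecutive triple of $L$ together with $r$ were spherical, then $r$ would be adjacent to every element of $L$ (each element of the connected Coxeter--Dynkin diagram lies on some consecutive triple when $|L|\ge 3$), so $L\cup\{r\}$ would consist of pairwise adjacent elements and $\mathrm{FC}$ would force $L\cup\{r\}$ to be spherical, contradicting the hypothesis; note that the $\widetilde{\mathrm{A}}_4$ configuration above violates $\mathrm{FC}$, which is why it cannot occur here. This is precisely how the paper opens its proof: it first shows, using $\mathrm{FC}$, that $r$ cannot be adjacent to both members of any non-commuting pair of $L$ (adjacency would propagate along the diagram to all of $L$), after which $\{s,t,p,r\}$ is automatically non-spherical for every consecutive triple and Corollary~\ref{cor:good pair jump} and Lemma~\ref{lem:good pair jump 1} apply. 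Once you replace your ``connectivity'' sentence by this $\mathrm{FC}$ argument, your proof is complete and is essentially the paper's.
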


\begin{proof} Suppose $|L|\geq 3$. If for some non-commuting pair $\{s,t\}\subset L$ we have that $r$ is adjacent to both $s,t$, then let $p\in L\setminus \{s,t\}$ be non-commuting with one of $s,t$, say $t$. Since $r$ is adjacent to $s$ and $\{t,p\}$ is not good, we have that $r$ is adjacent to $p$. Proceeding in this way we get that $r$ is adjacent to all the elements of $L$, which by FC contradicts our hypothesis. Thus by Corollary~\ref{cor:good pair jump} we have $|L|=3$, and by Lemma~\ref{lem:good pair jump 1} there are at least two elements of $L$ not adjacent to any element of $S\setminus (L\cup L^\perp)$.
\end{proof}

Lemmas~\ref{lem:good pair jump 1} and~\ref{lem:good pair jump new} give also immediately the following.

\begin{lem}
\label{lem:exposed_criteria2}
Let $L\subset S$ be irreducible spherical, and $s,t,p$ be consecutive vertices in the Coxeter--Dynkin diagram of $L$. Let $r\in S$ be distinct from and not adjacent to $p$. Suppose that both of the following hold:
\begin{itemize}
\item
$\{s,t\}\subset L$ is not good with respect to $r$,
\item
$\{t,p\}\subset L$ is not good with respect to $r$ or $p\in L$ is not good with respect to $r$.
\end{itemize}
Then $L$ is exposed.
\end{lem}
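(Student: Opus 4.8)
The plan is to reduce this statement to Lemmas~\ref{lem:good pair jump 1} and~\ref{lem:good pair jump new}, which already extract the desired structural conclusion ("no element of $S\setminus(\{s,t,p\}\cup\{s,t,p\}^\perp)$ is adjacent to certain vertices") from the non-goodness hypotheses. The only thing that must be handled separately is the possibility that $L$ itself is larger than $\{s,t,p\}$, or that $\{s,t,p\}$ is reducible, so that Definition~\ref{def:goodJ} (which is phrased in terms of $S\setminus(L\cup L^\perp)$, not $S\setminus(\{s,t,p\}\cup\{s,t,p\}^\perp)$) actually applies.

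First I would dispose of the degenerate cases. If $\{s,t,p\}$ is not irreducible, then since $s,t,p$ are consecutive in the Coxeter--Dynkin diagram and $m_{st},m_{tp}\ge 3$, the only way this happens is if $\{s,t,p\}=\{s,p\}\cup\{t\}$ with $t$ commuting with both — impossible since $t$ is non-commuting with $s$ and $p$; so $\{s,t,p\}$ is automatically irreducible (of type $A_3$ or $B_3$). Next, suppose $|L|\le 2$ or $|L|=3$: then $L=\{s,t,p\}$, and we apply the relevant structural lemma directly. If $\{s,t\}$ and $\{t,p\}$ are both not good, Lemma~\ref{lem:good pair jump 1} gives that no element of $S\setminus(L\cup L^\perp)$ is adjacent to $s$ or $p$, so $s$ and $p$ are two elements witnessing exposedness. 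If instead $\{s,t\}$ is not good and $p$ is not good, Lemma~\ref{lem:good pair jump new} gives that no element of $S\setminus(L\cup L^\perp)$ is adjacent to $t$ or $p$, and again $t,p$ are the two witnesses. In both cases the hypothesis that $\{s,t,p,r\}$ is not spherical, needed to invoke those lemmas, follows from the assumption that $r$ is distinct from and not adjacent to $p$: indeed then $\{p,r\}\subseteq\{s,t,p,r\}$ is already non-spherical.

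The remaining case is $|L|\ge 4$. Here I would argue that one of the non-goodness hypotheses forces a contradiction, so this case does not arise. Pick $q\in L\setminus(\{s,t,p\}\cup\{s,t,p\}^\perp)$; such $q$ exists because $L$ is irreducible with $|L|\ge 4$ and $\{s,t,p\}$ is a proper irreducible subset (if every element of $L\setminus\{s,t,p\}$ were in $\{s,t,p\}^\perp$, then $L$ would be $\{s,t,p\}\cup\{s,t,p\}^\perp$-decomposable, contradicting irreducibility). Now $q$ lies in $L\setminus(\{s,t,p\}\cup\{s,t,p\}^\perp)$, hence in $S\setminus(\{s,t,p\}\cup\{s,t,p\}^\perp)$, and $q$ is adjacent to at least one of $s,t,p$ — in fact, since $\{s,t,p\}$ together with $q$ sits inside the connected diagram of $L$, and since $s,p$ are the two ends of the path $s$-$t$-$p$, one checks that $q$ must be adjacent to $s$ or $p$ (in type $F_4$, the fourth vertex attaches to an end of the $A_3$-subpath; in longer diagrams the same holds for a path of three consecutive vertices). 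Then: if $\{s,t\}$ and $\{t,p\}$ are both not good, Lemma~\ref{lem:good pair jump 1} says $q$ is adjacent to neither $s$ nor $p$, a contradiction; if $\{s,t\}$ is not good and $p$ is not good, Lemma~\ref{lem:good pair jump new} says $q$ is adjacent to neither $t$ nor $p$, so $q$ is adjacent to $s$ only — but then I would re-run the argument with the roles arranged so that Lemma~\ref{lem:good pair jump 1} or a symmetry of Lemma~\ref{lem:good pair jump new} rules this out as well.

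The step I expect to be the main obstacle is precisely this last bookkeeping in the $|L|\ge 4$ case: verifying that the auxiliary vertex $q$ is forced to be adjacent to the "wrong" end-vertex of the $s$-$t$-$p$ path, using the classification of finite Coxeter groups to pin down how a fourth vertex can attach. In fact, it may be cleaner to avoid splitting on $|L|$ altogether and instead observe that the conclusions of Lemmas~\ref{lem:good pair jump 1} and~\ref{lem:good pair jump new} are statements about $S\setminus(\{s,t,p\}\cup\{s,t,p\}^\perp)$, that $L\setminus\{s,t,p\}\subseteq\{s,t,p\}^\perp$ whenever the structural conclusion holds nontrivially, hence $L\subseteq\{s,t,p\}\cup\{s,t,p\}^\perp$, forcing $L=\{s,t,p\}$ by irreducibility of $L$; then $S\setminus(L\cup L^\perp)=S\setminus(\{s,t,p\}\cup\{s,t,p\}^\perp)$ and the two witnessing vertices from the structural lemma directly certify that $L$ is exposed in the sense of Definition~\ref{def:goodJ}. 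I would write the final proof in this streamlined form.
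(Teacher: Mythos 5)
Your overall route is the same as the paper's: the paper deduces this lemma immediately from Lemmas~\ref{lem:good pair jump 1} and~\ref{lem:good pair jump new}, and your first paragraph does exactly that, including the correct check that $r\neq p$ and $r$ not adjacent to $p$ forces $\{s,t,p,r\}$ to be non-spherical, and the correct identification of the two witnessing elements ($s,p$ in the first case, $t,p$ in the second). The gap is in how you rule out $|L|\geq 4$. You read ``adjacent'' in the conclusions of those two lemmas as adjacency in the Coxeter--Dynkin diagram and try to control how a fourth vertex $q$ of $L$ can attach to the path $s$--$t$--$p$. That branch does not close: under that reading your claim that $q$ must attach to an end of the path is false (in a diagram of type $D_n$ the extra vertex attaches to the middle vertex $t$), and in your second case you concede that $q$ attached only to $s$ is not excluded and appeal to ``re-running the argument with a symmetry of Lemma~\ref{lem:good pair jump new}'' --- but no such symmetric statement is available, since the hypotheses there give nothing about $\{t,p\}$ or $s$ failing to be good. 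Your final ``streamlined'' claim $L\setminus\{s,t,p\}\subseteq\{s,t,p\}^\perp$ is the right statement, but as written it is asserted rather than proved, and the justification you sketched earlier is the flawed one.

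The missing observation, which is what makes the paper's ``immediately'' honest, is that in this article adjacency always means adjacency in the defining graph, and $L$ is spherical, so every element of $L$ is adjacent to every other element of $L$ --- in particular to $p$. Hence any $q\in L\setminus\{s,t,p\}$ not lying in $\{s,t,p\}^\perp$ would be an element of $S\setminus(\{s,t,p\}\cup\{s,t,p\}^\perp)$ adjacent to $p$, which the conclusion of either Lemma~\ref{lem:good pair jump 1} or Lemma~\ref{lem:good pair jump new} forbids. Therefore $L\subseteq\{s,t,p\}\cup\{s,t,p\}^\perp$, irreducibility of $L$ gives $L=\{s,t,p\}$, and the two witnesses from the relevant lemma show $L$ is exposed in the sense of Definition~\ref{def:goodJ}. (In the first case you could also exclude $|L|\geq 4$ by quoting Corollary~\ref{cor:good pair jump}, but the defining-graph observation handles both cases uniformly and with no case analysis of diagram shapes.)
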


We now describe particular subsets of pairs of maximal spherical residues.

\begin{defn}
Let $L\subset S$ be a maximal spherical subset. By Corollary~\ref{cor:spherical conjugate}, $\langle L\rangle$ stabilises a unique maximal cell $\sigma_L\subset\Da$. Let $C_L$ be the collection of vertices in $\sigma_L$ and let $D_L$ be the elements of $C_L$ incident to each $\W_l$ for $l\in L$.
\end{defn}

When $L$ is irreducible, then by Corollary~\ref{cor:spherical conjugate} it is easy to see that $D_L$ consists of two antipodal vertices. In general, let $L=L_1\sqcup\cdots\sqcup L_k$ be the decomposition of $L$ into maximal irreducible subsets. Let $\sigma_L=\sigma_1\times\cdots\times\sigma_k$ be the induced product decomposition of the associated cell. Then $D_L$ is a product of pairs of antipodal vertices $\{u_i,v_i\}$ for each $\sigma_i$. Let $\pi_i\colon D_L\rightarrow \{u_i,v_i\}$ be the coordinate projections.

\begin{defn}
\label{def:E}
For each ordered pair $(L,I)$ of maximal spherical subsets of $S$, we define the following subset $\E_{L,I}\subseteq D_L$. First, for each~$i=1,\ldots, k$, consider the following $\E^i_{L,I} \subseteq D_L$. If $L_i$ is exposed or $L_i\subset I$, then we take $\E^i_{L,I}=D_L$. Otherwise, since $I$ is maximal spherical, there is $r\in I$ with $L_i\cup\{r\}$ not spherical.
Moreover, by Lemma~\ref{lem:exposed_criteria}, there is $\{s,t\}\subset L_i$ that is good with respect to $r$. Then we take $\E^i_{L,I}=C_L\cap \Delta^{\{s,t\},r}$, where in Definition~\ref{def:Delta} we substitute $L$ with $L_i$.
Note that such $\E^i_{L,I}$ is contained in $D_L$ and equal $\pi^{-1}_i(u_i)$ or $\pi_i^{-1}(v_i)$. Furthermore, $\E^i_{L,I}$ does not depend on $\{s,t\}$ and $r$ by Proposition~\ref{prop:consistent}. We define $\E_{L,I}=\E^1_{L,I}\cap\cdots\cap \E^k_{L,I}$.
\end{defn}

\begin{remark}
\label{def:alternateE}
In Definition~\ref{def:E}, in the case where $L_i$ is neither exposed nor a subset of $I$, the set $\E^i_{L,I}$ can be characterised in the following alternate way that does not involve the notion of a good pair. Namely, by Remark~\ref{rem:Emu} and Lemma~\ref{lem:Emu} we have that $\Delta^{\{s,t\},r}$ is the fundamental domain for $L_i$ that is contained in $\Phi(\mathcal W_{s'}, wC_I)$, for any $s'\in S$ that is not a leaf of the Coxeter--Dynkin diagram of $L_i$ and $(s',w)$ the unique simple base with support $L_i$ and core~$s'$.
\end{remark}

\begin{defn}
	\label{def:complexity function}
We define the \emph{complexity} of $S$, denoted $\K(S)$, to be the ordered pair of numbers
	\begin{center}
		$\big(\K_1(S),\K_2(S)\big)=\Big(\sum_{L\neq I}d(C_L,C_{I}),\sum_{L\neq I}d(\E_{L,I},\E_{I,L})\Big)$,
	\end{center}
where $L$ and $I$ range over all maximal spherical subsets of $S$.
For two Coxeter generating sets $S$ and $S_\tau$, we define $\K(S_\tau)<\K(S)$ if $\K_1(S_\tau)<\K_1(S)$, or $\K_1(S_\tau)=\K_1(S)$ and $\K_2(S_\tau)<\K_2(S)$.
\end{defn}

In the following lemma we prove that elementary twists preserve exposed $L$. This will enable us later to trace the change of $\K_2(S)$.

\begin{defn}
\label{def:correspondence_spherical}
Let $L\subset S$ be maximal spherical and let $\tau$ be an elementary twist with $S\setminus (J\cup J^\perp)=A\sqcup B$ as in the definition of an elementary twist in the Introduction. We define the following spherical subset $L_\tau\subset \tau(S)$. If $L\subseteq A\cup J\cup J^\perp$, then we set $L_\tau=L$. If $L\subseteq B\cup J\cup J^\perp$, then we set $L_\tau=w_JLw_J^{-1}$. Note that this definition is not ambiguous if $L\subseteq J\cup J^\perp$, since then by the maximality of $L$ we have $J\subseteq L$ and hence $L=w_JLw_J^{-1}$. If $L'$ is an irreducible
subset of $L$, then similarly $L'_\tau$ denotes $L'$ or $w_JL'w_J^{-1}$ depending on whether $L\subseteq A\cup J\cup J^\perp$ or $L\subseteq B\cup J\cup J^\perp$ as before. Note that $L'_\tau$ might depend on $L$, but only if $L'\subsetneq J$. In particular this cannot happen for $|L'|\geq 3$ and $|J|=2$.
\end{defn}

Note that $L_\tau\subset \tau(S)$ is still maximal spherical and the assignment $L\to L_\tau$ is a bijection between the maximal spherical subsets of $S$ and $\tau(S)$.

\begin{lem}
\label{lem:staygood}
Let $\tau$ be an elementary twist of $S$. Let $L$ be a maximal irreducible subset of a maximal spherical subset of $S$. If $|L|=3$ and $L$ is exposed in $S$, then $L_\tau$ is exposed in $\tau(S)$.
\end{lem}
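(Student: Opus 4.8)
The plan is to show that the defining property of being exposed for a $3$-element irreducible $L$ is preserved under $\tau$. Recall $L$ exposed with $|L|=3$ means there are at least two elements of $L$, say $s$ and $t$, not adjacent to any element of $S\setminus(L\cup L^\perp)$. The first reduction is to handle the easy case $L\subseteq J\cup J^\perp$: then by maximality of the spherical subset containing $L$ we have $J\subseteq L$ (as noted in Definition~\ref{def:correspondence_spherical}), so $L_\tau=w_JLw_J^{-1}$, and $\tau$ is an isomorphism of Coxeter systems on $\langle J\cup J^\perp\rangle$-type neighborhoods; in fact $\tau$ restricted to any relevant neighborhood of $L$ is by definition just conjugation by $w_J$, hence sends adjacency to adjacency and $L^\perp$ to $(L_\tau)^\perp$, so the witnesses $s,t$ map to witnesses for $L_\tau$.

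For the main case $L$ is \emph{not} contained in $J\cup J^\perp$, so $L$ meets $S\setminus(J\cup J^\perp)=A\sqcup B$. Since $L$ is irreducible and connected in the defining graph while $A$ and $B$ are unions of components of $S\setminus(J\cup J^\perp)$ (no edges between $A$ and $B$), the part of $L$ outside $J\cup J^\perp$ lies entirely in $A$ or entirely in $B$; say it lies in $A$, so $L\subseteq A\cup J\cup J^\perp$ and $L_\tau=L$. Now I must check that the two witnesses $s,t\in L$ remain non-adjacent to every element of $\tau(S)\setminus(L\cup L^\perp)$, where $\tau(S)\setminus(L\cup L^\perp)$ consists of the elements of $A\setminus L$ together with the conjugates $w_Jbw_J^{-1}$ for $b\in B$, plus possibly reshuffling of $J^\perp$ (but elements of $J^\perp$ are fixed by $\tau$ and commuting relations among them and with $J$ are unchanged). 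The key geometric fact I will exploit is that $\tau$ changes an edge $\{x,y\}$ of the defining graph only when exactly one of $x,y$ lies in $B$ and the other in $J$; edges inside $A\cup J^\perp\cup J$, inside $B$, and inside $J\cup J^\perp$ are all unaffected, and there are no $A$–$B$ edges to begin with.

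So suppose for contradiction some $s$ (one of the two witnesses, WLOG) is adjacent in $\tau(S)$ to some $c\in\tau(S)\setminus(L\cup L^\perp)$. Since $s\in L\subseteq A\cup J\cup J^\perp$, and edges touching $s$ are altered only if $s\in J$ and $c$ corresponds to an element of $B$: if $s\in A$ or $s\in J^\perp$ then $s$'s adjacencies are unchanged, so $s$ is adjacent in $S$ to the element underlying $c$, which then lies in $S\setminus(L\cup L^\perp)$ — if $c=w_Jbw_J^{-1}$ with $b\in B$ this forces an $A$–$B$ or $J^\perp$–$B$ edge, impossible (and a genuine $A\setminus L$ or shuffled-$J^\perp$ neighbor directly contradicts $s$ being a witness in $S$); the remaining subtlety is $s\in J$. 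But if $s\in J$ and $s$ is a witness, then $s$ is non-adjacent in $S$ to every element of $S\setminus(L\cup L^\perp)\supseteq B$ (note $B\cap(L\cup L^\perp)=\emptyset$ since $L\subseteq A\cup J\cup J^\perp$ and $L^\perp$ cannot meet $B$ as that would again need an $A$–$B$ or forbidden edge, using $L\not\subseteq J\cup J^\perp$ so $L$ has a vertex in $A$ witnessing connectivity constraints) — hence $s$ commutes with no $b\in B$ and $m_{sb}=\infty$, and conjugating, $s=w_Jsw_J^{-1}$ is still non-adjacent to $w_Jbw_J^{-1}$. The remaining new neighbors of $s$ in $\tau(S)$ come from $A\cup J^\perp$, where $\tau$ acts trivially, contradicting again that $s$ is a witness in $S$. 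The main obstacle, and the step deserving the most care, is precisely this bookkeeping of which edges $\tau$ creates or destroys around a vertex of $J$, together with checking $L^\perp$ does not expand under $\tau$ in a way that shrinks $\tau(S)\setminus(L\cup L^\perp)$ and trivially validates the conclusion — but that direction only helps. Assembling these cases yields that the same two elements $s,t$ (or their $w_J$-conjugates in the $B$-case) witness that $L_\tau$ is exposed in $\tau(S)$.
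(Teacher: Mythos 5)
There is a genuine gap, and it sits exactly where the content of the lemma lies. Your key step treats a witness $s\in J$ and argues: $s$ is non-adjacent in $S$ to every $b\in B$, ``and conjugating, $s=w_Jsw_J^{-1}$ is still non-adjacent to $w_Jbw_J^{-1}$.'' But $w_Jsw_J^{-1}=s$ only when conjugation by $w_J$ fixes $J$ pointwise, i.e.\ when $|J|=1$ or $J=\{s,t\}$ with $m_{st}$ even --- and these are precisely the cases in which the twist does not change the defining graph at all (recall also that the standing $3$-rigidity assumption gives $|J|\leq 2$). In the only nontrivial case, $J=\{s,t\}$ with $m_{st}$ odd, we have $w_Jsw_J^{-1}=t$, so $\langle s,\tau(b)\rangle$ is conjugate to $\langle t,b\rangle$: in $\tau(S)$ the element $s$ is adjacent to $\tau(b)$ if and only if $t$ is adjacent to $b$ in $S$. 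Hence a witness lying in $J$ can stop being a witness after the twist (with the other element of $J$ possibly becoming one instead), and your conclusion that ``the same two elements $s,t$'' remain witnesses is false in general; the exposing pair can genuinely change, which is exactly what the paper's case analysis ($|L\cap J|=0,1,2$) is designed to handle.

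This is not a cosmetic issue, because one must also exclude the ``mixed'' configuration in which $S\setminus(L\cup L^\perp)$ meets both $A$ and $B$ when $J\subset L$; otherwise the statement fails. For instance, take $S=\{s,t,u,x,y\}$ with $m_{st}=m_{tu}=m_{tx}=m_{ty}=3$, $m_{su}=2$ and all other pairs non-adjacent: $L=\{s,t,u\}$ is exposed with witnesses $s,u$, but twisting $B=\{y\}$ by $w_{st}$ makes $s$ adjacent to $\tau(y)$ while $t$ stays adjacent to $x$, leaving only the single witness $u$. Of course this $S$ is not $3$-rigid ($L$ weakly separates it), which is the point: the paper uses the witness in $J$ to force $J^\perp\subseteq L\cup L^\perp$ and then $3$-rigidity to force $S\setminus(L\cup L^\perp)$ entirely into $A$ or into $B$, after which the number of witnesses inside $J$ is preserved under the $s\leftrightarrow t$ swap; and when $J\cap L=\{t\}$, that $t$ is automatically not a witness (it is adjacent to $s\in S\setminus(L\cup L^\perp)$), so both witnesses lie in $A\cup J^\perp$ and survive. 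Your proposal never invokes $3$-rigidity and cannot be repaired without an argument of this kind. Two smaller slips: $J^\perp$--$B$ edges are not forbidden (only $A$--$B$ edges are), so that step of your case analysis must appeal to the witness property rather than to a structural impossibility; and in your ``easy case'' $L\subseteq J\cup J^\perp$, irreducibility of $L$ with $|L|=3>|J|$ forces $L\subseteq J^\perp$, not $J\subseteq L$ (the remark you cite concerns maximal spherical sets, not $L$).
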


\begin{proof}
We can assume that $\tau$ is an elementary twist with $J=\{s,t\}$ and $m_{st}$ odd, since otherwise the defining graph of $S$ is invariant under~$\tau$. Let $S\setminus (\{s,t\}\cup \{s,t\}^\perp)$ decompose into $A\sqcup B$ as in the definition of an elementary twist. Without loss of generality assume $L\subset A\cup \{s,t\}\cup \{s,t\}^\perp$. Then $L_\tau=L$.

First consider the case where $\{s,t\}$ is disjoint from $L$. Then $l\in L$ is adjacent to $r\in S$ if and only if $\tau(l)=l$ is adjacent to $\tau(r)$ in the defining graph of $\tau(S)$ and $m_{lr}=m_{\tau(l)\tau(r)}$. In particular $(L_\tau)^\perp=\tau({L}^\perp)$. Then $L_\tau$ is exposed.

Secondly, consider the case where $\{s,t\}\subset L$. Then $(L_\tau)^\perp=\tau({L}^\perp)$. Moreover, since $k=1$ or $2$ elements among $s,t$ are not adjacent to any element of $S\setminus (L\cup {L}^\perp)$, we have that $S\setminus (L\cup {L}^\perp)$ is contained entirely in $A$ or $B$. Consequently, there are $k$ elements among $\tau(s)=s,\tau(t)=t$ that are not adjacent to any elements of $\tau (S\setminus (L\cup {L}^\perp))$, and thus $L_\tau$ is exposed.

Thirdly, consider the case where $\{s,t\}\cap L=\{t\}$. Then $t$ is the only element of $L$ adjacent to some element of $S\setminus (L\cup {L}^\perp)$. Note also that since $t\in L, s\notin L$, we have ${L}^\perp\subseteq A\cup \{s,t\}^\perp$. Thus $(L_\tau)^\perp=\tau({L}^\perp)$. Moreover, none of the elements of $\tau (S\setminus (L\cup L^\perp))$ is adjacent to an element of $L_\tau\setminus \{t\}$, and so $L_\tau$ is exposed.
\end{proof}

\begin{remark}
\label{rem:L_isubsetI}
\begin{enumerate}[(i)]
\item
Suppose that $L_i$ in Definition~\ref{def:E} is not exposed and that we have $L_i\subset I$. Then we also have $(L_i)_\tau\subset I_\tau$ in $\tau(S)$. Thus by Lemma~\ref{lem:staygood} we have $\E^i_{L,I}=D_L$ if and only if $\E^i_{L_\tau,I_\tau}=D_{L_\tau}$.
\item
Consequently, by Remark~\ref{def:alternateE}, if $L\cup I\subseteq A\cup J\cup J^\perp$, then $\E_{(L_i)_\tau,I_\tau}=\E_{L_i,I}$ and so in particular we have $d(\E_{L,I},\E_{I,L})=d(\E_{L_\tau,I_\tau},\E_{I_\tau,L_\tau})$. We have the same conclusion for
$L\cup I\subseteq B\cup J\cup J^\perp$, since then $\E_{(L_i)_\tau,I_\tau}=w_J\E_{L_i,I}$.
\item
Suppose that $L\subseteq A\cup J\cup J^\perp$ and $I\subseteq B\cup J\cup J^\perp$ with $J\subseteq I$. Then $C_I=w_JC_I=C_{I_\tau}$. Consequently, by Remark~\ref{def:alternateE} we have $\E_{(L_i)_\tau,I_\tau}=\E_{L_i,I}$.
Analogously, if $L\subseteq B\cup J\cup J^\perp$ and $J\subseteq I$, then $\E_{(L_i)_\tau,I_\tau}=w_J\E_{L_i,I}$.
\end{enumerate}
\end{remark}

\section{Proof of the main theorem : exposed components}
\label{sec:proof}

We keep the setup from Section~\ref{sec:independent}.
The Main Theorem reduces to the following.

\begin{thm}
\label{thm:minimal complexity}
Let $S$ be a Coxeter generating set of type $\mathrm{FC}$ angle-compatible with a Coxeter generating set $S'$. Suppose that any Coxeter generating set twist-equivalent to $S$ is $3$-rigid. Assume moreover that $S$ has minimal complexity among all Coxeter generating sets twist-equivalent to $S$. Then $S$ is conjugate to $S'$.
\end{thm}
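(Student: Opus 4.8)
The plan is to verify the reduction of the Main Theorem to Theorem~\ref{thm:minimal complexity}, and then to explain how the bulk of the remaining sections assemble into a proof of Theorem~\ref{thm:minimal complexity} itself.

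\textbf{The reduction.} First I would observe that any Coxeter generating set twist-equivalent to $S$ has the same three standing hypotheses: it is of type $\mathrm{FC}$ (noted in the Introduction to be twist-invariant), it is angle-compatible with $S'$ (since elementary twists and conjugations preserve the set of reflections and hence angle-compatibility, as in \cite{HM}), and it is $3$-rigid by assumption. Moreover the complexity $\K$ takes values in $\Z_{\geq 0}\times\Z_{\geq 0}$ with the lexicographic order, which is well-ordered, so among all Coxeter generating sets twist-equivalent to $S$ there is one, call it $S_0$, of minimal complexity; it inherits all the hypotheses just listed. Applying Theorem~\ref{thm:minimal complexity} to $S_0$ gives that $S_0$ is conjugate to $S'$, hence twist-equivalent to $S'$, hence $S$ is twist-equivalent to $S'$. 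This is exactly the conclusion of the Main Theorem, so it suffices to prove Theorem~\ref{thm:minimal complexity}, and we may (and do) from now on replace $S$ by $S_0$ and assume minimal complexity outright.

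\textbf{Proof of Theorem~\ref{thm:minimal complexity}.} By Corollary~\ref{cor:geometric criterion} it is enough to show that for each $s\in S$, any simple marking $\mu$ with core $s$ gives rise to the same halfspace $\Phi^\mu_s$. The strategy, following \cite{HP}, is first to establish the \emph{consistency} of every irreducible spherical pair $\{s,t\}\subset S$: that all $\Phi^\mu_s$ with $j_1=t$ agree (giving the well-defined $\Phi^{t,A}_s$ of Definition~\ref{def:K} for each component $A$ of $S\setminus(\{s,t\}\cup\{s,t\}^\perp)$), that for every two components $A,B$ one has $\Phi^{t,A}_s=\Phi^{t,B}_s$ and symmetrically in $s,t$ (compatibility), and that the resulting pair $\Phi^{t,A}_s,\Phi^{s,A}_t$ is geometric (self-compatibility). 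The argument proceeds by the minimality of $\K$: if some $\{s,t\}$ failed to be consistent, one exhibits an explicit elementary twist $\tau$ (turning some $\E_{L,I}=D_L$ toward $C_I$ for exposed components, the twist $b\mapsto tstbtst$ or a ``folding'' for big components with $m_{st}=3$ or $m_{st}>3$ respectively) that strictly decreases $\K_1$, or keeps $\K_1$ fixed and decreases $\K_2$ — contradicting minimality. Here Lemma~\ref{lem:staygood} and Remark~\ref{rem:L_isubsetI} are what let us control $\K_2$ under these twists. This is carried out in Sections~\ref{sec:proof}--\ref{sec:small}. Once consistency of all doubles is in hand, Section~\ref{sec:consistent} upgrades it: using Proposition~\ref{prop:same side2} (to move the first letter $j_1$ of a marking along paths in the defining graph) together with the consistency of doubles, one shows $\Phi^\mu_s$ depends only on $s$, and Corollary~\ref{cor:geometric criterion} finishes the proof.

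\textbf{Main obstacle.} The decisive difficulty is the compatibility of \emph{big} components, treated in Section~\ref{sec:big}; everything else is bookkeeping around it. When $m_{st}=3$ one partitions $S\setminus(\{s,t\}\cup\{s,t\}^\perp)$ into $A\sqcup B$ according to which of the two possible halfspaces $\Phi^{t,A_i}_s$ each component realizes, and must check that the twist $\tau$ sending each $b\in B$ to $tstbtst$ is a genuine elementary twist (i.e.\ that $\{s,t\}$ weakly separates and the partition respects components — this uses $3$-rigidity essentially) and then that it strictly decreases $\K_1$ by recomputing gallery distances $d(C_L,C_I)$ in $\Da^{(1)}$. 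When $m_{st}>3$ the twist $tstbtst$ need not be available, and the substitute is the notion of \emph{peripherality}, isolating a ``least inconsistent'' double and performing a folding that again lowers $\K_1$; verifying that peripherality can always be arranged and that the folding is complexity-decreasing is the technical heart. The remaining cases — small components (where $\K_1$ alone is insufficient and one must track $\K_2$, which is precisely why $\E_{L,I}$ and the machinery of Sections~\ref{subsec:relative position}--\ref{sec:independent2} were set up) and exposed components (Section~\ref{sec:proof}) — are handled by the same minimality-of-complexity template but with $\K_2$ in the role of $\K_1$.
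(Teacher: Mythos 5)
Your outline reproduces the paper's global architecture, and it is legitimate to quote Proposition~\ref{prop:smallcompatible} (consistent doubles) and Corollary~\ref{cor:geometric criterion} as inputs. The genuine gap is in your final step, which is precisely the content of the paper's own proof of this theorem: you assert that, once doubles are consistent, Proposition~\ref{prop:same side2} lets one "move $j_1$ along paths" and conclude that $\Phi^\mu_s$ depends only on $s$. But Proposition~\ref{prop:same side2} needs an embedded path in the defining graph outside $s\cup s^\perp$ joining $K^{\mu_1}_s$ to $K^{\mu_2}_s$, so it only applies when the two markings have their $K$-sets in the same component of $S\setminus(s\cup s^\perp)$. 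Together with consistency of doubles (which supplies condition~(i) of that proposition) this yields a well-defined halfspace $\Phi^{A}_s$ for each component $A$, exactly as in Definition~\ref{def:K} with $\{s,t\}$ replaced by $s$ — and nothing more. The compatibility of distinct components of $S\setminus(s\cup s^\perp)$ is a separate problem, which your proposal never addresses.

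The paper closes this by one more minimal-complexity argument, not by anything you cite: if two components give different halfspaces, one applies the elementary twist in $\langle s\rangle$ conjugating one family of components by $s$, and shows it decreases $\K_1$, or preserves $\K_1$ and decreases $\K_2$. Making that work requires locating, relative to $\Phi^A_s$, the sets $C_L$ for $L\not\ni s$, and for $L\ni s$ the sets $\E_{L,I}$ — with a special argument (the second assertion of Lemma~\ref{lem:double_side}) when the irreducible piece of $L$ containing $s$ is exposed, where one only controls the distance-realizing chambers of $\E_{L,I}$ — and then computing $\E_{I_\tau,L_\tau}=s\E_{I,L}$ and $\E_{L_\tau,I_\tau}=\E_{L,I}$ via Remark~\ref{rem:L_isubsetI} and Remark~\ref{rem:self-comp} (the latter using self-compatibility of the spherical pair $\{s,t\}$ with $t\in L$). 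None of this is a routine consequence of Proposition~\ref{prop:same side2} plus consistent doubles, so as written your argument stops one essential step short of the theorem; the remainder of your text (the reduction from the Main Theorem, and the summary of Sections~\ref{sec:proof}--\ref{sec:small}) is background rather than proof and does not fill this hole.
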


The main step in the proof of Theorem~\ref{thm:minimal complexity} will be to establish the consistency of doubles.

\begin{defn}
	\label{def:consistent doubles}
Let $S$ be a Coxeter generating set and let $I\subset S$ be irreducible spherical with $|I|=2$. We say that $I$ is \emph{consistent}
if for any simple markings $\mu_1,\mu_2$ with supports containing $I$ and cores $s_1,s_2\in I$ the pair $\Phi_{s_1}^{\mu_1}, \Phi_{s_2}^{\mu_2}$ is geometric (which means $\Phi_{s_1}^{\mu_1}=\Phi_{s_2}^{\mu_2}$ for $s_1=s_2$).
Otherwise we say that $I$ is \emph{inconsistent}.
We say that $S$ has \emph{consistent doubles}, if any such $I$ is consistent.
\end{defn}

In the following we use the notation from Definition~\ref{def:K}.

\begin{defn}
\label{def:compatible}
Let $\{s,t\}\subset S$ be irreducible spherical. We say that components $A_1,A_2$ of $S\setminus(\{s,t\}\cup \{s,t\}^{\perp})$ are \emph{compatible} if $\Phi_s^{t,A_1}=\Phi_s^{t,A_2}$ and $\Phi_t^{s,A_1}=\Phi_t^{s,A_2}$. We say that a component $A$ of $S\setminus(\{s,t\}\cup \{s,t\}^{\perp})$ is \emph{self-compatible} if the pair $\Phi^{t,A}_{s},\Phi^{s,A}_{t}$ is geometric.
\end{defn}
Note that if all components of $\{s,t\}$ are compatible and self-compatible, then $\{s,t\}$ is consistent. We will prove the compatibility in different ways depending on the type of the components.

\begin{defn}
\label{defn:big}
Let $\{s,t\}\subset S$ be irreducible spherical. A component~$A$ of $S\setminus (\{s,t\}\cup \{s,t\}^\perp)$ is \emph{big} if there is $r\in A$ with $\{s,t,r\}$ not spherical. Otherwise $A$ is \emph{small}.
We say that a component $A$ of $S\setminus (\{s,t\}\cup \{s,t\}^\perp)$ is \emph{exposed} if there is $p\in A$
such that $\{s,t,p\}$ is exposed (See Definition~\ref{def:goodJ}). An exposed component might be small or big. 
\end{defn}

The goal of this section is the following.

\begin{prop}
\label{lem:exposed}
Under the assumptions of Theorem~\ref{thm:minimal complexity}, if there is an exposed component of $S\setminus(\{s,t\}\cup\{s,t\}^\perp)$, then $\{s,t\}$ is consistent.
\end{prop}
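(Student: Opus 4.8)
The goal is to prove that an exposed component $A$ of $S\setminus(\{s,t\}\cup\{s,t\}^\perp)$ forces $\{s,t\}$ to be consistent. By the remark after Definition~\ref{def:compatible}, it suffices to show that every component of $S\setminus(\{s,t\}\cup\{s,t\}^\perp)$ is self-compatible and that all components are pairwise compatible. I would split this into two halves: (a) small components are self-compatible, and (b) every exposed component is self-compatible and compatible with every other component. Granting (a) and (b), the only remaining case is compatibility between two non-exposed components or self-compatibility of a big component, but the hypothesis gives us \emph{an} exposed component $A$, so compatibility of any $B$ with $A$ (by (b)) plus self-compatibility of $A$ (by (b)) and of $B$ --- here if $B$ is small use (a), and if $B$ is big we still need its self-compatibility, which is postponed to Section~\ref{sec:small}, so in fact this proposition will invoke only parts that are available: the exposed component pins down $\Phi_s^{t,\cdot}$ and $\Phi_t^{s,\cdot}$ on \emph{all} components via (b), and self-compatibility of $A$ via (b) then propagates. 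Concretely: fix the exposed component $A$; by (b), $\Phi_s^{t,A}=\Phi_s^{t,B}$ and $\Phi_t^{s,A}=\Phi_t^{s,B}$ for all $B$, and the pair $\Phi_s^{t,A},\Phi_t^{s,A}$ is geometric; hence the pair $\Phi_s^{t,B},\Phi_t^{s,B}$ is geometric for every $B$, so $\{s,t\}$ is consistent.

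The heart is therefore (b), and within it the compatibility assertion. The strategy, as flagged in the proof outline, is to use the elementary twists that an exposed $L=\{s,t,p\}$ (with $p\in A$, $L$ exposed) makes available, and argue that if $A$ were incompatible with some $B$ — meaning $\Phi_s^{t,A}\neq\Phi_s^{t,B}$ or the corresponding inequality for $t$ — then one of these twists strictly decreases the complexity $\K$, contradicting minimality. The relevant twist turns $\E_{L,I}=D_L$ (recall $D_L$ is the \emph{whole} pair of antipodal chambers when $L$ is exposed, by Definition~\ref{def:E}) ``towards'' $C_I$ for the maximal spherical $I$ sitting over the offending component, which shrinks a $\K_2$-contribution $d(\E_{L,I},\E_{I,L})$ while leaving $\K_1$ unchanged; Lemma~\ref{lem:staygood} guarantees that after the twist $L_\tau$ is still exposed so the bookkeeping of $\K_2$ is controlled, and Remark~\ref{rem:L_isubsetI} lets us match up the $\E$-terms that are not supposed to change. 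The precise form of the twist depends on whether $|L|=2$ or $|L|=3$ and on the parity of $m_{st}$ and $m_{tp}$; for $|L|=2$ one twists in $\langle s,t\rangle$, for $|L|=3$ exposed one uses a twist in one of the rank-two parabolics inside $\langle L\rangle$ whose existence is precisely what ``exposed'' encodes (at least two of $s,t,p$ not adjacent to anything outside $L\cup L^\perp$).

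For (a), self-compatibility of a small component $A$: here every element of $A$ is adjacent to $t$ or to $s$... actually small means $\{s,t,r\}$ is spherical for all $r\in A$, so by FC every $r\in A$ is adjacent to both $s$ and $t$, and $\{s,t,r\}$ is irreducible spherical. One then compares the markings $((s,t),r)$ and $((t,s),r)$ and shows the halfspaces $\Phi_s^{t,A},\Phi_t^{s,A}$ form a geometric pair by analysing the fundamental domain $\Delta^{(s,t),r}=\Delta^{(t,s),r}$ for $L=\{s,t,r\}$ — this is essentially Proposition~\ref{prop:switch base}/\ref{prop:define side} applied with $r$ itself playing a role, together with Corollary~\ref{cor:extra} to transfer between $\W_s$ and $\W_t$ and Lemma~\ref{lem:spherical same side} to locate the relevant chambers; the geometricity of the pair then follows from Corollary~\ref{cor:spherical conjugate} (two fundamental domains for an irreducible spherical set) and Proposition~\ref{consistence}-type reasoning.

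\textbf{Main obstacle.} The delicate part is the complexity-decrease argument in (b): one must write down the correct twist $\tau$ in each combinatorial case, verify it is a genuine elementary twist (the partition of $S\setminus(J\cup J^\perp)$ into $A\sqcup B$ respecting components — this is where $3$-rigidity and the structure of components of $\{s,t\}$ versus components of the rank-two set $J$ inside $L$ enter), check $\K_1(\tau(S))=\K_1(S)$ (no maximal-residue distance changes because the twist is supported on walls of $\langle L\rangle$ which already separate the relevant $C_I$'s consistently), and then show $\K_2$ drops by exhibiting that the flipped chamber of $D_L$ moves strictly closer to $\E_{I,L}$ while every other $\E$-term is either unchanged or also non-increasing — and this last point is exactly what Remark~\ref{rem:L_isubsetI} and Lemma~\ref{lem:staygood} are built to supply, but assembling them uniformly over all pairs of maximal spherical subsets is the real work.
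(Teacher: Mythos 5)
Your top-level assembly (all components' halfspaces agree with those of the exposed component $A$, and $A$ is self-compatible, hence $\{s,t\}$ is consistent) is fine, but it is essentially a restatement of the proposition: your item (b) \emph{is} the statement to be proved, and for it you only describe a strategy. The paper's actual engine is Lemma~\ref{lem:triple_not2good}: for an exposed triple $J=\{s,t,p\}$, \emph{all} halfspaces arising from simple markings with support contained in $J$ are pairwise geometric. Its proof is the genuinely hard part (two cases, sequences of elementary twists in $\mathbb Z_2$'s and dihedral subgroups generated by pairs in $J$, Deodhar's result on conjugating reflections, and the bookkeeping showing $\K_1$ is preserved while $\K_2$ strictly drops). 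You explicitly defer exactly this step as the ``main obstacle,'' so the heart of the proof is missing. Moreover the mechanism you sketch is off: when $L$ (or its relevant irreducible factor) is exposed, $\E_{L,I}=D_L$ is the \emph{whole} antipodal pair, so there is nothing to ``turn towards $C_I$''; the paper instead twists the elements of $S\setminus(J\cup J^\perp)$ so that the sector containing their walls becomes geometric, and the $\K_2$-decrease comes from idle/non-idle pairs precisely because exposedness (via Lemma~\ref{lem:staygood}) pins $\E^1_{L_\tau,I_\tau}=D_{L_\tau}$ in place. Your case ``$|L|=2$'' does not occur here, since $p\in A$ forces $|L|=3$. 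Once Lemma~\ref{lem:triple_not2good} is available, compatibility with every other component $B$ is a short consequence you do not spell out: by $3$-rigidity $S\setminus(J\cup J^\perp)$ is a single component, exposedness makes every $r\in B$ non-adjacent to one of $s,t$ (so $B$ is automatically big and $\{s,t,r\}$ is not spherical), and comparing the markings $((s,t),r)$ and $((s,tp),r)$ with the same marker $r$ identifies $\Phi_s^{t,B}$ with $\Phi_s^{t,A}$.

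A secondary error: in your sketch of (a), for $r$ in a small component the pair $((s,t),r)$ is \emph{not} a marking, since $\{s,t,r\}$ is spherical; markers must be taken outside the component (Corollary~\ref{cor:small_self}) or one must use a longer base reaching a marker non-adjacent to the support, as in Lemma~\ref{lem:uniquesmall_self}. This matters for your assembly only in the case where the exposed component is the unique component and is small, which is exactly where the paper invokes Lemma~\ref{lem:uniquesmall_self}; as written, your argument has no valid substitute for it.
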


In the proof we will need the following terminology and lemmas.

\begin{defn}
Let $J\subset S$ be irreducible spherical. By $\mathcal W_J$ we denote the union of $\mathcal W_j$ over all reflections $j\in \langle J\rangle$. The components of $\Da\setminus \mathcal W_J$ are called \emph{sectors for $J$}. The two sectors containing the geometric fundamental domains for $J$ are called \emph{geometric}.
\end{defn}

\begin{lem}
\label{lem:triple_not2good}
Under the hypotheses of Theorem~\ref{thm:minimal complexity},
let $J\subset S$ be exposed with $|J|=3$.
Suppose that we have simple markings $\mu_1,\mu_2$ with supports contained in $J$, and cores $s_1,s_2$.
Then the pair $\Phi_{s_1}^{\mu_1}, \Phi_{s_2}^{\mu_2}$ is geometric.
\end{lem}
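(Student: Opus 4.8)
The statement of Lemma~\ref{lem:triple_not2good} concerns an exposed $J\subset S$ with $|J|=3$ and two simple markings $\mu_1,\mu_2$ with supports inside $J$ and cores $s_1,s_2\in J$. I want to reduce everything to Corollary~\ref{cor:geometric criterion}-style reasoning at the level of a single irreducible spherical triple, combined with the fact that $J$ being exposed forces at least two of its three elements to be "invisible" to the complement $S\setminus(J\cup J^\perp)$. More precisely, write $J=\{a,b,c\}$ and use Definition~\ref{def:goodJ}: after relabelling, at least two of $a,b,c$, say $a,b$, are not adjacent to any element of $S\setminus(J\cup J^\perp)$.

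First I would observe that since $J$ is exposed with $|J|=3$, the only markings with support contained in $J$ and core in $J$ are: those of the form $((s,J'),m)$ with $J'\subseteq J$ irreducible containing the core, and $m$ the marker. Because $a,b$ are not adjacent to any element outside $J\cup J^\perp$, any marker $m$ for a marking with core $a$ or $b$ whose support is $J$ or $\{a,b\}$ or similar must be "far" from $J$; and the key point is that whichever of the two halfspace-determining configurations we look at, the relevant wall $w\mathcal W_m$ sits in one definite sector for $J$. Concretely, I would show that each $\Phi_{s_i}^{\mu_i}$ equals the halfspace for $s_i$ determined by the unique geometric fundamental domain $\Delta$ for $J$ on the appropriate side: one uses Lemma~\ref{lem:spherical same side} to move $w\mathcal W_m$ to $\mathcal W_m$, and then the fact that $J\cup\{m\}$ is not spherical together with angle-compatibility (Corollary~\ref{cor:spherical conjugate}) to pin down which of the two geometric fundamental domains $\Delta^{\mu_i}$ is. The exposedness hypothesis is exactly what guarantees that $\Delta^{\mu_1}=\Delta^{\mu_2}=:\Delta$, since the two "free" elements $a,b$ impose no constraint and the third element $c$ (if it is a core or in a support) is handled by a single good-pair argument via Proposition~\ref{prop:consistent} or, if no good pair is available, by the defining-graph combinatorics of Lemma~\ref{lem:exposed_criteria}.

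Having established that all $\Delta^{\mu_i}$ coincide with a single geometric fundamental domain $\Delta$ for $J$, I would finish by invoking Lemma~\ref{lem:spherical same side} in reverse: for each $i$, $\Phi_{s_i}^{\mu_i}=\Phi(\mathcal W_{s_i}, w_i\mathcal W_{m_i})=\Phi(\mathcal W_{s_i},\Delta)$, because $\Delta^{\mu_i}=\Delta$ lies inside $\Phi_{s_i}^{\mu_i}$ by Definition~\ref{def:Delta} and is a geometric fundamental domain for $J\ni s_i$. Then the collection $\{\Phi(\mathcal W_s,\Delta)\}_{s\in J}$ is geometric by Corollary~\ref{cor:spherical conjugate} (it is the geometric collection whose intersection-of-chambers is $\Delta$), so in particular the pair $\Phi_{s_1}^{\mu_1},\Phi_{s_2}^{\mu_2}$ is geometric (and equal when $s_1=s_2$).

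**Main obstacle.** The delicate point is showing $\Delta^{\mu_1}=\Delta^{\mu_2}$ when the cores $s_1,s_2$ differ or when one support is a proper irreducible subset of $J$ and the other is all of $J$: a priori $\Delta^{\mu_i}$ is only well-defined relative to the chosen base, and I need that the two sectors selected by the two markers agree. This is where exposedness does the real work — I would split into cases according to which of $a,b,c$ is (not) adjacent to elements of $S\setminus(J\cup J^\perp)$, and in the case where $\{s_1,s_2\}$ or a relevant pair is good with respect to some $r\in S$ with $J\cup\{r\}$ not spherical, apply Proposition~\ref{prop:consistent}; in the remaining case, Lemma~\ref{lem:exposed_criteria} together with Lemma~\ref{lem:good pair jump 1} shows the complement's position is forced, so the sector is the same. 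I expect the bookkeeping of these sub-cases, rather than any single hard geometric input, to be the bulk of the argument.
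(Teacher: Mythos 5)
Your argument never invokes the minimal-complexity hypothesis, and that is precisely the gap: the lemma is stated ``under the hypotheses of Theorem~\ref{thm:minimal complexity}'' because it is false as a purely combinatorial/geometric statement about one configuration. Since $S$ is $3$-rigid and $|J|=3$, the complement $S\setminus(J\cup J^\perp)$ is a single component, so all walls $\mathcal W_r$ with $r$ in the complement (in particular the translated marker walls $w_i\mathcal W_{m_i}$, up to elements of $\langle J\rangle$) sit in a single sector $\Lambda$ for $J$, or in $\Lambda\cup t\Lambda$ when some element of the complement is adjacent to some $t\in J$. If $\Lambda$ is a geometric sector, your conclusion follows exactly as you say, via Lemma~\ref{lem:spherical same side}; that is the easy half. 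But nothing forces $\Lambda$ to be geometric: it can be $w\Lambda_0$ for a nontrivial $w\in\langle J\rangle$, and in that situation the two halfspaces $\Phi_{s_1}^{\mu_1},\Phi_{s_2}^{\mu_2}$ really do fail to form a geometric pair, so there is no common fundamental domain $\Delta^{\mu_1}=\Delta^{\mu_2}$ to be found. Your two proposed tools cannot exclude this: Lemma~\ref{lem:exposed_criteria} is a statement about the defining graph only and says nothing about where $\Lambda$ lies in $\Da$; and Proposition~\ref{prop:consistent} (and the whole $\Delta^{\{s,t\},r}$ machinery) is unavailable here because for an exposed $J$ with $|J|=3$ the pairs in $J$ are typically \emph{not} good with respect to any $r$ in the complement --- the two elements of $J$ not adjacent to anything outside $J\cup J^\perp$ can never be joined to $r$ by a path avoiding $\{s,t\}\cup\{s,t\}^\perp$. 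Indeed this is exactly why $E_{L,I}$ is defined to be all of $D_L$ for exposed $L_1$.

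The paper's proof treats the non-geometric case by contradiction with the minimality of $\mathcal K$: writing $\Lambda=w\Lambda_0$, it constructs an explicit sequence of elementary twists realizing conjugation by $w$ --- twists in $\mathbb Z_2$'s letter by letter when no element of the complement is adjacent to $J$, and, when some $r'$ is adjacent to some $t\in J$, twists in dihedral subgroups $\langle s_i,t_i\rangle$ obtained from Deodhar's decomposition of $w$ into rank-two longest words --- and shows these keep $\mathcal K_1$ fixed while strictly decreasing $\mathcal K_2$ (a gallery argument through $\Lambda$). Exposedness of $J$ is used not to pin down the sector, but to make the $\mathcal K_2$ bookkeeping work: for idle $L\supseteq J$ one needs $E^1_{L,I}=D_L$ both before and after the twist, which is supplied by Lemma~\ref{lem:staygood}. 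So the missing idea in your proposal is the entire complexity-decreasing twist argument; without it the ``forced sector'' step has no justification.
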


\begin{proof}
\textbf{Case 1. The unique component of $S\setminus (J\cup J^\perp)$ has no element adjacent to an element of~$J$.}
Since $S\setminus (J\cup J^\perp)$ is a single component, all the walls $\mathcal W_{r}$ for $r\in S\setminus (J\cup J^\perp)$ lie in $\Da$ in a single sector $\Lambda$ for $J$.
If $\Lambda$ is a geometric sector, then the pair $\Phi_{s_1}^{\mu_1}, \Phi_{s_2}^{\mu_2}$ is geometric, since by Lemma~\ref{lem:spherical same side} each $\Phi_{s_i}^{\mu_i}$ is the halfspace for~$s_i$ containing $\Lambda$.

If $\Lambda$ is not geometric, suppose that it is of form $w\Lambda_0$ for $\Lambda_0$ a geometric sector for $J$ and $w\in \langle J \rangle$. Let $w=t_0\cdots t_{n-1}$ with $t_i\in J$ and minimal $n$. Consider the following Coxeter generating sets $S_i$ with $S_0=S$ and elementary twists $\tau_i$ with $S_{i+1}=\tau_i(S_i)$. Namely, we set $J_i=\{t_i\}, A_i=J\setminus (t_i\cup t_i^\perp)$ (which is a component of $S\setminus (t_i\cup t_i^\perp)$), and $B_i=S\setminus (J_i\cup J_i^\perp\cup A_i)$. The elementary twist $\tau_i$ conjugates $B_i$ by~$t_i$ and fixes the other elements of $S_i$. Let $\tau=\tau_{n-1}\circ \cdots \circ \tau_0$, so that $S_n=\tau(S)$.

We now argue, similarly as in \cite[\S7.2]{HP}, that $\mathcal K_1(\tau(S))=\mathcal K_1(S)$ and $\mathcal K_2(\tau(S))<\mathcal K_2(S)$. A maximal spherical subset $L$ of $S$ either contains~$J$, and is then called \emph{idle} or intersects $S\setminus (J\cup J^\perp)$. Thus all $D_I$ with $I\subset S$ maximal spherical that are not idle, are contained in $\Lambda$.
For $L$ idle we have $D_{L_\tau}=D_L$. In particular,
for all maximal spherical $I\subset S$ we have $C_{I_\tau}=w^{-1}C_I$, implying $\mathcal K_1(\tau(S))=\mathcal K_1(S)$.

To compare $\mathcal K_2(\tau(S))$ and $\mathcal K_2(S)$, first note that
if both $L$ and $I$ are maximal spherical and idle (resp.\ not idle), then by Remark~\ref{rem:L_isubsetI}(ii) we have $d(\E_{L,I},\E_{I,L})=d(\E_{L_\tau,I_\tau},\E_{I_\tau,L_\tau})$.
Now suppose that $L$ is idle and $I$ is not idle.
Then by Remark~\ref{rem:L_isubsetI}(iii) we have
$\E_{I_\tau,L_\tau}=w^{-1}\E_{I,L}$. Furthermore, $J\subset L$ is maximal irreducible, and so the decomposition of $L$ into maximal irreducible subsets has the form $L=L_1\sqcup \cdots \sqcup L_k$ with $L_1=J$. Since $J$ is exposed, by Lemma~\ref{lem:staygood} we have $\E^1_{L_\tau,I_\tau}= D(L_\tau)=D(L)=\E^1_{L,I}$. For $i\neq 1$ we have $L_i\subseteq J^\perp$ and so by Remark~\ref{rem:L_isubsetI}(ii) we have $\E^i_{L_\tau,I_\tau}=w^{-1}\E^i_{L,I}$, which equals $\E^i_{L,I}$ since $w$ commutes with $L_i$. Thus $\E_{L_\tau,I_\tau}=\E_{L,I}$. Let $\beta=\beta'\beta''$ be a minimal gallery from a chamber in $E_{I,L}$ to a chamber $x\in E_{L,I}$, where $\beta'\subset \Lambda$ and $\beta''$ is contained in the $J$-residue containing $x$. (Such a gallery exists by \cite[Thm 2.9]{R}.) Then $w^{-1}\beta'$ connects a chamber in $\E_{I_\tau,L_\tau}$ to a chamber in $\E_{L_\tau,I_\tau}$, proving $\mathcal K_2(\tau(S))<\mathcal K_2(S)$.

\smallskip \noindent
\textbf{Case 2. The unique component of $S\setminus (J\cup J^\perp)$ has an element $r'$ adjacent to an element $t\in J$.}
Let $\Lambda$ be a sector for $J$ with $\Lambda\cup t\Lambda$ containing $\mathcal W_{r'}$.
If $\Lambda$ or $t\Lambda$
is a geometric sector, then the pair $\Phi_{s_1}^{\mu_1}, \Phi_{s_2}^{\mu_2}$ is geometric as in Case~1.
Suppose now that neither
$\Lambda$ nor $t\Lambda$
is geometric. Let $w\in \langle J \rangle$ be of minimal word length with $w\Lambda_0=\Lambda$ or~$t\Lambda$ and
$\Lambda_0$ a geometric sector for $J$. Say we have $w\Lambda_0=\Lambda$.

Since $\mathcal W_{r'}$ intersects $\mathcal W_t$, there is $t'\in J$ satisfying $wt'w^{-1}=t$. By \cite[Prop~5.5]{Deod} there is $n\geq 0$, elements $t_0=t, \ldots, t_n=t'\in J$ and $s_0, \ldots s_{n-1}\in J$ such that for each $i=0,\ldots, n-1$ we have $s_{i}\neq t_{i}$, and for
$$
w_i=
\begin{cases}
s_i, \text{ if } s_i \text{ and } t_{i} \text{ commute }\\
\text{the longest word in } \langle s_{i},t_{i} \rangle, \text{ otherwise}\\
\end{cases}
$$
we have
\begin{itemize}
\item
$w_it_{i+1}w_i^{-1}=t_i$, and
\item
$w=w_0\cdots w_{n-1}$ or $w=tw_0\cdots w_{n-1}$.
\end{itemize}
We focus on the case where $w=w_0\cdots w_{n-1}$. Construct the following Coxeter generating sets $S_i\supset J$ with $S_0=S$ and elementary twists~$\tau_i$ with $S_{i+1}=\tau_i(S_{i})$.
We will also get inductively that the unique component of $S_i\setminus (J\cup J^\perp)$ does not have an element adjacent to an element of $J$ distinct from $t_i$.

If $s_i$ and $t_{i}$ commute, we set $J_i=\{s_i\}, A_i=J\setminus (s_i\cup s_i^\perp), B_i=S\setminus (J_i\cup J_i^\perp\cup A_i)$. The elementary twist $\tau_i$ conjugates $B_i$ by $s_i=w_i$ and fixes the other elements of $S_i$. Note that $A_i$ is a component of $S\setminus (J_i\cup J_i^\perp)$ since $t_i\in s_i^\perp$.
If $s_i$ and $t_{i}$ do not commute, we set $J_i=\{s_i,t_i\}$ and keep the same formulas for $A_i,B_i$. Then the elementary twist $\tau_i$ conjugates~$B_i$ by $w_i$ and fixes the other elements of $S_i$.

We argue analogously as in Case~1 to obtain $\mathcal K_1(S_n)=\mathcal K_1(S_0)$. For $L$ idle and $I$ not idle we also obtain analogously $\E_{I_\tau,L_\tau}=w^{-1}\E_{I,L},\E_{L_\tau,I_\tau}=\E_{L,I}$.
Let $\beta=\beta'\beta''$ be a minimal gallery from a chamber in $E_{I,L}$ to a chamber $x\in E_{L,I}$, where $\beta'\subset \Lambda$ or $t\Lambda$ and $\beta''$ is contained in the $J$-residue containing $x$. Then $w^{-1}\beta'$ connects a chamber in $\E_{I_\tau,L_\tau}$ to $x\in\E_{L_\tau,I_\tau}$ for $\beta'\subset \Lambda$ or to a chamber adjacent to $x$ for $\beta'\subset t\Lambda$. Moreover, in the latter case $\beta''$ has length at least $2$ by the minimality assumption on $w$.
This shows $d(\E_{L_\tau,I_\tau},\E_{I_\tau,L_\tau})<d(\E_{L,I},\E_{I,L})$ and hence $\mathcal K_2(S_n)<\mathcal K_2(S_0)$.

If $w=tw_0\cdots w_{n-1}$, then we start with an additional elementary twist in $\langle t\rangle$ and we continue analogously.
\end{proof}

\begin{lem}
\label{lem:uniquesmall_self}
Under the assumptions of Theorem~\ref{thm:minimal complexity}, if $S\setminus(\{s,t\}\cup\{s,t\}^\perp)$ is a single component that is small, then it is self-compatible.
\end{lem}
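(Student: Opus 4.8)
The plan is to produce a single geometric fundamental domain for $\{s,t\}$ that realises both halfspaces. Since $A$ is small, every $k\in A$ is adjacent to both $s$ and $t$, so $\{s,t,k\}$ is irreducible spherical. Fix such a $k$ and, using Remark~\ref{rem:find markings}, pick a simple marking $\mu=((s,J),m)$ with core $s$ and support $J\supseteq\{s,t,k\}$; let $\nu=((t,J),m)$ be the simple marking with the same support and marker but core $t$ (this is a marking since $J\cup\{m\}$ is not spherical). As $A$ is the only component of $S\setminus(\{s,t\}\cup\{s,t\}^\perp)$, we have $K^\mu_{s,t}=J\setminus(\{s,t\}\cup\{s,t\}^\perp)\subseteq A$ and $k\in K^\mu_{s,t}$, so $\Phi^{t,A}_s=\Phi^\mu_s$ by Definition~\ref{def:K}, and likewise $\Phi^{s,A}_t=\Phi^\nu_t$. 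Since $J$ is spherical, Definition~\ref{def:Delta} gives $\Phi^\mu_s=\Phi(\W_s,\Delta^\mu)$ and $\Phi^\nu_t=\Phi(\W_t,\Delta^\nu)$ for geometric fundamental domains $\Delta^\mu,\Delta^\nu$ of $J$. Because $\{s,t\}\subseteq J$, a geometric fundamental domain $\Delta$ of $J$ yields a geometric pair $\Phi(\W_s,\Delta),\Phi(\W_t,\Delta)$ for $\{s,t\}$ (it lies inside one of the two geometric fundamental domains of $\{s,t\}$ from Corollary~\ref{cor:spherical conjugate}). Hence it suffices to prove $\Delta^\mu=\Delta^\nu$.

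If neither $s$ nor $t$ is a leaf of the Coxeter--Dynkin diagram of $J$, this is exactly Lemma~\ref{lem:Emu} with the marker $m$ in place of $r$ (legitimate as $J\cup\{m\}$ is not spherical). Since $s$ and $t$ do not commute they are adjacent in that diagram, which is a tree on $|J|\ge 3$ vertices, so at most one of them — say $s$ — is a leaf, and then $t$ is its unique Coxeter--Dynkin neighbour; equivalently $s$ commutes with $J\setminus\{s,t\}$. If some $k'\in A$ does not commute with $s$, then rerunning the construction with $\{s,t,k'\}$ in place of $\{s,t,k\}$ produces a support in which $s$ is adjacent to both $t$ and $k'$, hence not a leaf, and the previous case applies. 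Thus we may assume $s$ commutes with all of $A$, hence (together with $\{s,t\}^\perp\subseteq s^\perp$) with all of $S\setminus\{t\}$; note that $t$ cannot also commute with all of $A$, since then $\{s,t\}$ would commute with $S\setminus\{s,t\}$ and $S$ would be reducible, so $t$ is not a leaf of the Coxeter--Dynkin diagram of $J$.

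It remains to treat this leaf case directly. Because $s$ commutes with $J\setminus\{s,t\}$, the base $(s,J)$ equals $(s,tv)$ and the base $(t,J)$ equals $(t,sv)$ for a common $v\in\langle J\setminus\{s,t\}\rangle$, with $sv=vs$; writing $t'=v^{-1}tv$ one computes $(tv)^{-1}\W_s=\W_{t'st'}$ and $(sv)^{-1}\W_t=\W_{st's}$, so by the equivalent description in Definition~\ref{def:Delta} (and Lemma~\ref{lem:spherical same side}) the domains $\Delta^\mu$ and $\Delta^\nu$ are the geometric fundamental domains of $J$ on the $\W_m$-side of $\W_{t'st'}$ and of $\W_{st's}$ respectively, two walls of reflections in the dihedral group $\langle s,t'\rangle$. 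When $m_{st}=3$ these walls coincide and $\Delta^\mu=\Delta^\nu$ at once. When $m_{st}\in\{4,5\}$ — the only other options, since choosing a Coxeter--Dynkin neighbour $q$ of $t$ in $J$ (which exists, as $t$ is not a leaf, and commutes with $s$) forces $\{s,t,q\}$ to be of type $B_3$ or $H_3$ — one determines the correct side by combining Lemma~\ref{lem:Emu} at a non-leaf core of $J$ with an explicit Tits-representation computation in $\langle s,t,q\rangle$ analogous to Lemma~\ref{lem:24cell}; alternatively one feeds the data into Lemma~\ref{lem:change of base} once the auxiliary geometricity of $J\cup\{m\}$ is in hand.

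The crux is this leaf case: Lemma~\ref{lem:Emu} is phrased only for non-leaf cores, and the good-pair machinery (Proposition~\ref{prop:switch base}) is unavailable because here $\{s,t,r\}$ is spherical for every $r\in S$, so $\{s,t\}$ is never good. One is therefore forced to track the conjugating elements by hand and to supply a $B_3$/$H_3$ analogue of the $F_4$ computation in Lemma~\ref{lem:24cell} pinning down which of the two geometric fundamental domains of $J$ is selected by the leaf core~$s$.
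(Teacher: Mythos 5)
Your reduction to the equality $\Delta^\mu=\Delta^\nu$ is fine as far as it goes, and the non-leaf case via Lemma~\ref{lem:Emu} and the $m_{st}=3$ leaf case are correct. But the argument has a genuine gap exactly at what you yourself identify as the crux: when the core $s$ (or, after your replacement of $k$ by $k'$, possibly $t$ -- note that switching to a support containing $\{s,t,k'\}$ may simply turn $t$ into the leaf, so "the previous case applies" is not warranted) is a leaf and $m_{st}\in\{4,5\}$, you do not prove which geometric fundamental domain of $J$ the leaf core selects. The two completions you sketch do not close it: the "$B_3$/$H_3$ analogue of Lemma~\ref{lem:24cell}" is never formulated or carried out, and the alternative via Lemma~\ref{lem:change of base} is unavailable, since that lemma requires $J\cup\{r_1,r_2\}$ to be geometric, whereas here $J\cup\{m\}$ is by definition of a marking not even spherical -- geometricity of such sets is precisely what the whole paper is trying to establish, so this route is circular. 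As it stands, the statement is proved only for $m_{st}=3$ or when a support with both $s$ and $t$ non-leaves exists.

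The paper avoids this computation entirely by exploiting the hypothesis more directly: since $S\setminus(\{s,t\}\cup\{s,t\}^\perp)$ is a single small component, \emph{every} element of $S\setminus\{s,t\}$ -- in particular the marker -- is adjacent to $s$ (elements of the component by smallness, elements of $\{s,t\}^\perp$ trivially). One then arranges the support to be a path $s,t,p,\ldots,q$ in the Coxeter--Dynkin diagram with $q$ not adjacent to the marker $r$, so that $s$ commutes with everything beyond $t$; the translated marker wall $pw\mathcal W_r$ therefore meets $\mathcal W_s$, and Corollary~\ref{cor:extra} (together with Lemma~\ref{lem:spherical same side}) immediately produces a common geometric fundamental domain for $\{s,t\}$ inside both $\Phi^\mu_s$ and $\Phi^{\mu'}_t$. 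In other words, the leaf situation you are stuck on is not an obstacle but the configuration the paper deliberately normalises to, because adjacency of the marker to $s$ plus Corollary~\ref{cor:extra} replaces any case analysis on $m_{st}$ or Tits-representation computation. If you want to salvage your approach, the missing $m_{st}\in\{4,5\}$ step should be replaced by (or deduced from) this wall-intersection argument.
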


\begin{proof}
Let $A=S\setminus(\{s,t\}\cup\{s,t\}^\perp)$.
Let $\mu$ be a simple marking with support $J$ containing $s,t$ guaranteed by Remark~\ref{rem:find markings}. Let $q\in J$ be not adjacent to the marker $r$ of $\mu$ and suppose without loss of generality that $t$ lies on the minimal length path $\alpha$ from $s$ to $q$ in the Coxeter--Dynkin diagram of $J$. Then, replacing $J$ with the set of elements on~$\alpha$, we can assume that the Coxeter--Dynkin diagram of $J$ is a path starting with~$s$. Thus $\mu=((s,tpw),r))$ where $p\in A$ and $s$ commutes with $pw$. Since $A$ is the unique component of $S\setminus(\{s,t\}\cup\{s,t\}^\perp)$, we have that $r$ is adjacent to $s$. Thus $\mathcal W=pw\mathcal W_r$ intersects $\mathcal W_s$ and so by Corollary~\ref{cor:extra} there is a geometric fundamental domain $F$ for $\{s,t\}$ that is contained in both $\Phi(t\mathcal W_s,\mathcal W)$, and $\Phi(\mathcal W_t,\mathcal W)=\Phi(\mathcal W_t,s\mathcal W)$, which is $\Phi_t^{\mu'}$ for $\mu'=((t,spw),r)$. Thus by Lemma~\ref{lem:spherical same side} the pair $\Phi^\mu_s, \Phi_t^{\mu'}$ is geometric, as desired.
\end{proof}

\begin{proof}[Proof of Proposition~\ref{lem:exposed}]
Let $J=\{s,t,p\}$ be exposed and let $A$ be the component of $S\setminus (\{s,t\}\cup \{s,t\}^\perp)$ containing $p$. Consider first the case where $A=S\setminus (\{s,t\}\cup \{s,t\}^\perp)$. If $A$ is small, then it suffices to apply Lemma~\ref{lem:uniquesmall_self}. If $A$ is big, then let $r\in A$ with $\{s,t,r\}$ not spherical. By Lemma~\ref{lem:triple_not2good}, the halfspaces for $s,t$ determined by the markings $((s,t),r),((t,s),r)$ are geometric and hence $A$ is self-compatible.

It remains to consider the case where $A\subsetneq S\setminus (\{s,t\}\cup \{s,t\}^\perp)$.
Let $B$ be a component of $S\setminus (\{s,t\}\cup \{s,t\}^\perp)$ distinct from $A$ and let $r\in B$. Since the unique component of $S\setminus (J\cup J^\perp)$ has no element adjacent to one of $s,t$, we have that $\{s,t,r\}$ is not spherical. By Lemma~\ref{lem:triple_not2good}, the halfspaces for $s,t$ determined by the markings $((s,t),r), ((s,tp),r), ((t,s),r),((t,sp),r)$ are geometric and hence $B$ is compatible with $A$ and they are both self-compatible.
\end{proof}

\begin{cor}
\label{cor:small_self}
Under the assumptions of Theorem~\ref{thm:minimal complexity}, each small component of $S\setminus(\{s,t\}\cup\{s,t\}^\perp)$ is self-compatible.
\end{cor}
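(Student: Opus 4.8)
The plan is to follow the strategy of the proof of Lemma~\ref{lem:uniquesmall_self}: reduce to a manageable situation, choose simple markings for $\Phi^{t,A}_s$ and $\Phi^{s,A}_t$ that are governed by a single wall, and finish either with Corollary~\ref{cor:extra} or with Lemma~\ref{lem:Emu}.

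First I would dispose of the easy cases. If some component of $S\setminus(\{s,t\}\cup\{s,t\}^\perp)$ is exposed, then by Proposition~\ref{lem:exposed} the pair $\{s,t\}$ is consistent, so in particular $\Phi^{t,A}_s,\Phi^{s,A}_t$ is geometric; and if $A$ is the only component, we apply Lemma~\ref{lem:uniquesmall_self}. So from now on assume $A$ is small, that no component is exposed, and that there is a further component. Fix $p\in A$; since $A$ is small, $\{s,t,p\}$ is irreducible spherical, so its Coxeter--Dynkin diagram is a path on three vertices in which $\{s,t\}$ is an edge, and after interchanging $s$ and $t$ (which leaves the statement unchanged) I may assume that $p,s,t$ are consecutive, so $p$ commutes with $t$ and $s$ is the middle vertex. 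The key point is that smallness of $A$ forces the pair $\{s,p\}$ — and not $\{s,t\}$ — to be good with respect to every marker $m$: by Lemma~\ref{lem:exposed_criteria} applied to the non-exposed set $\{s,t,p\}$, one of the non-commuting pairs $\{s,t\},\{s,p\}$ is good with respect to $m$; but if $\{s,t\}$ were good with respect to $m$, then $m$ would lie in the same component of $S\setminus(\{s,t\}\cup\{s,t\}^\perp)$ as $p$, namely in $A$, whence $\{s,t,m\}$ would be spherical, contradicting goodness of $\{s,t\}$. In particular $t$ lies in the same component of $S\setminus(\{s,p\}\cup\{s,p\}^\perp)$ as every marker, so $t$ is adjacent to some element of $S\setminus(\{s,t,p\}\cup\{s,t,p\}^\perp)$ — the penultimate vertex of a minimal path in the defining graph from a marker to $t$.

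Next I would pass to a maximal spherical $J\supseteq\{s,t,p\}$, enlarged so as to absorb such neighbours of $t$ when possible; note that $s$ is never a leaf of the Coxeter--Dynkin diagram of $J$, having both $t$ and $p$ among its neighbours. If $t$ is also not a leaf of $J$, then for a marker $m$ Lemma~\ref{lem:Emu} gives $\Delta^{(s,J),m}=\Delta^{(t,J),m}$; this common domain is the geometric fundamental domain for $J$ inside $\Phi^{t,A}_s$ and also the one inside $\Phi^{s,A}_t$, and since the two geometric fundamental domains for $J$ lie on opposite sides of both $\mathcal W_s$ and $\mathcal W_t$, the equality is exactly the assertion that $\Phi^{t,A}_s,\Phi^{s,A}_t$ is geometric. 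If $t$ is a leaf of $J$, then $t$ is attached only to $s$, so every element of $J\setminus\{s,t\}$ commutes with $t$; choosing a marker $m$ adjacent to $t$ (which exists, using the neighbour produced above and, if necessary, a further enlargement of $J$), and using moves M1, M2 and Proposition~\ref{prop:compactible1} as in the proof of Lemma~\ref{lem:uniquesmall_self}, I would rewrite $\Phi^{t,A}_s=\Phi(\mathcal W_s,t\mathcal W)$ and $\Phi^{s,A}_t=\Phi(\mathcal W_t,s\mathcal W)$, where $\mathcal W=v\mathcal W_m$ and every letter of $v$ commutes with $t$. Then Corollary~\ref{cor:extra} applies with $j_1=t$, $j_2=s$ and this $\mathcal W$: disjointness of $\mathcal W$ and $t\mathcal W$ from $\mathcal W_s$ are marking conditions, and $\mathcal W\cap\mathcal W_t=v(\mathcal W_m\cap\mathcal W_t)$ is non-empty because $v$ commutes with $t$ and $m$ is adjacent to $t$; choosing the geometric fundamental domain $F$ for $\{s,t\}$ with $\Phi(\mathcal W_s,F)=\Phi^{t,A}_s$ then gives $\Phi^{s,A}_t=\Phi(\mathcal W_t,s\mathcal W)=\Phi(\mathcal W_t,F)$, so $A$ is self-compatible.

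The main obstacle is the leaf case of Paragraph~3: it rests entirely on the observation of Paragraph~2 that, for small $A$, the good pair is $\{s,p\}$ rather than $\{s,t\}$ (this is what supplies a neighbour of $t$, hence eventually a marker adjacent to $t$), and on choosing the support $J$ and the marker $m$ consistently through the enlargements. There remains one genuinely degenerate configuration — where, after all enlargements, $t$ is still a leaf of $J$ and every neighbour of $t$ already lies in $J$ — in which no marker adjacent to $t$ exists; there $\{t\}$ weakly separates $S$, and one finishes by a complexity comparison under the minimality hypothesis of Theorem~\ref{thm:minimal complexity}, producing a twist that decreases $\mathcal K$ whenever self-compatibility fails.
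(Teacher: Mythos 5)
Your reduction (exposed components via Proposition~\ref{lem:exposed}, a unique component via Lemma~\ref{lem:uniquesmall_self}) matches the paper, and your non-leaf branch via Lemma~\ref{lem:Emu} is workable modulo the fact that Lemma~\ref{lem:Emu} needs $J$ \emph{irreducible}, not merely maximal spherical. But the rest has genuine gaps. In the leaf branch you apply Corollary~\ref{cor:extra} with $j_1=t$, $j_2=s$ and claim that disjointness of $\mathcal W=v\mathcal W_m$ from $\mathcal W_s$ is ``a marking condition''; it is not. The markings with cores $s$ and $t$ only give $t\mathcal W\cap\mathcal W_s=\emptyset$ and $s\mathcal W\cap\mathcal W_t=\emptyset$, while $\mathcal W$ itself can perfectly well cross $\mathcal W_s$ (this happens whenever $\{s\}\cup\{\text{letters of }v\}\cup\{m\}$ is spherical), so the hypothesis of Corollary~\ref{cor:extra} is unverified; for $v=p$ one can rescue it by an FC argument, but nothing is said, and after your ``further enlargements'' it is unclear. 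More seriously, the ``degenerate configuration'' is precisely where your route would have to do real work, and you do not do it: the hypotheses of Theorem~\ref{thm:minimal complexity} only exclude weakly separating subsets of size at least $3$, so observing that $\{t\}$ weakly separates $S$ yields no contradiction, and the promised ``complexity comparison producing a $\mathcal K$-decreasing twist'' is neither constructed nor shown to exist. Arguments of that kind are the delicate content of Sections~\ref{sec:proof}--\ref{sec:small}; a sentence pointing at them is not a proof of this corollary.

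None of this machinery is needed, because the paper resolves the case of several components by the opposite labelling and a different lemma. Choose $p\in A$ with, without loss of generality, $m_{sp}=2$, so that $s,t,p$ are consecutive and $t$ (not $s$) is the middle vertex, and take $r$ in a component $B\neq A$, so $r$ is not adjacent to $p$. Since $A$ is not exposed and $\{s,t\}$ cannot be good with respect to $r$, Lemma~\ref{lem:exposed_criteria2} (rather than Lemma~\ref{lem:exposed_criteria}) gives that $\{t,p\}\subset\{s,t,p\}$ is good with respect to $r$. Then $3$-rigidity and Proposition~\ref{prop:compactible1}, applied to the markings $((t,p),r)$ and $((t,ps),r)$, give $\Phi(\mathcal W_t,p\mathcal W_r)=\Phi(\mathcal W_t,ps\mathcal W_r)$, which is exactly the hypothesis of Lemma~\ref{sublem:extra} for $\mathcal W=p\mathcal W_r$ and $j_1=s$, $j_2=t$; unlike Corollary~\ref{cor:extra}, that lemma requires no intersection of $\mathcal W$ with any wall, so no marker adjacent to $t$ is ever needed. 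It produces a geometric fundamental domain for $\{s,t\}$ contained in both $\Phi_t^{((t,sp),r)}=\Phi_t^{s,A}$ and (via Lemma~\ref{lem:spherical same side}) $\Phi_s^{((s,tp),r)}=\Phi_s^{t,A}$, which is the self-compatibility of $A$. Your choice of putting $s$ in the middle is what forces you to hunt for markers adjacent to $t$ and leads to the unhandled degenerate case; switching the roles and using Lemma~\ref{sublem:extra} removes the problem entirely.
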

\begin{proof} Let $A$ be a small component of $S\setminus(\{s,t\}\cup\{s,t\}^\perp)$.
If $A=S\setminus(\{s,t\}\cup\{s,t\}^\perp)$, then it suffices to apply Lemma~\ref{lem:uniquesmall_self}.
Otherwise, let $r$ be an element of a component $B$ of $S\setminus(\{s,t\}\cup\{s,t\}^\perp)$ distinct from $A$.
Let $p\in A$, suppose without loss of generality $m_{sp}=2$, and set $\mu=((s,tp),r)$.
If $A$ is exposed, then we can apply Proposition~\ref{lem:exposed}. Otherwise, by Lemma~\ref{lem:exposed_criteria2} we have that $\{t,p\}\subset \{s,t,p\}$ is good with respect to $r$. Thus by the $3$-rigidity of $S$ and Proposition~\ref{prop:compactible1}, we have $\Phi(\mathcal W_t,p\mathcal W_r)=\Phi(\mathcal W_t,ps\mathcal W_r)$. By Lemma~\ref{sublem:extra} there is a geometric fundamental domain $F$ for $\{s,t\}$ that is contained in both $\Phi(t\mathcal W_s,p\mathcal W_r)$ and $\Phi(\mathcal W_t,p\mathcal W_r)$, which is $\Phi_t^{\mu'}$ for $\mu'=((t,sp),r)$. Thus by Lemma~\ref{lem:spherical same side} the pair $\Phi^\mu_s, \Phi_t^{\mu'}$ is geometric, and so $A$ is self-compatible.
\end{proof}

\section{Big components}
\label{sec:big}

The content of this section was designed together with Pierre-Emmanuel Caprace.

\begin{lem}
\label{lem:mst=3}
Under the assumptions of Theorem~\ref{thm:minimal complexity}, if $m_{st}=3$, then $\{s,t\}$ is consistent.
\end{lem}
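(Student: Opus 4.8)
The goal is to establish that for $m_{st}=3$ the double $\{s,t\}$ is consistent, and by Proposition~\ref{lem:exposed} and Corollary~\ref{cor:small_self} it remains only to treat the case where all components of $S\setminus(\{s,t\}\cup\{s,t\}^\perp)$ are big and non-exposed, and to prove pairwise compatibility of these big components (self-compatibility of big components is deferred to Section~\ref{sec:small}, so here we may assume it, or prove just compatibility and leave self-compatibility for later). The crucial simplification when $m_{st}=3$ is that $s\mathcal W_t=t\mathcal W_s$, so $w_{\{s,t\}}=sts=tst$ and the elementary twist in $\langle s,t\rangle$ conjugates elements of a chosen union of components by $sts$. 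For each big component $A$ we have the two halfspaces $\Phi^{t,A}_s$ and $\Phi^{s,A}_t$; since $\{s,t\}$ is irreducible spherical of rank $2$, `compatibility' of $A$ and $B$ reduces to showing that the corresponding geometric fundamental domains for $\{s,t\}$ agree.

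The plan is to argue by contradiction using minimality of the complexity $\mathcal K(S)$. Partition $S\setminus(\{s,t\}\cup\{s,t\}^\perp)=A\sqcup B$ where $A$ is the union of all components $A_i$ giving one value of the halfspace $\Phi^{t,A_i}_s$ and $B$ the union of those giving the opposite value; by Definition~\ref{def:K} and Proposition~\ref{prop:compactible1} this is a well-defined partition into unions of components, and by Corollary~\ref{cor:extra}/Lemma~\ref{sublem:extra} applied with $j_1=s,j_2=t$ the value of $\Phi^{t,A_i}_s$ controls that of $\Phi^{s,A_i}_t$, so if $\{s,t\}$ is inconsistent then $B$ is non-empty. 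Since $J=\{s,t\}$ with $m_{st}=3$ is odd, $\tau$ sending each $b\in B$ to $stsbsts$ and fixing all other elements of $S$ is an elementary twist (it is legitimate because each component of $S\setminus(J\cup J^\perp)$ lies entirely in $A$ or in $B$). The key computation is to show $\mathcal K_1(\tau(S))<\mathcal K_1(S)$: one tracks the maximal spherical subsets $L$ of $S$. Those with $L\subseteq A\cup J\cup J^\perp$ are unchanged, those with $L\subseteq B\cup J\cup J^\perp$ get conjugated by $sts$ (so $C_{L_\tau}=stsC_L$), and the point is that for a pair $L\subseteq A\cup J\cup J^\perp$, $I\subseteq B\cup J\cup J^\perp$ the distance $d(C_{L_\tau},C_{I_\tau})=d(C_L, stsC_I)$ is strictly smaller than $d(C_L,C_I)$, because the wall $\mathcal W_s=\mathcal W_t$-region geometry and the definition of $A$ vs.\ $B$ via the halfspaces $\Phi^{t,A_i}_s$ force $C_I$ to lie on the far side of $\mathcal W_s$ from $C_L$, while $stsC_I$ lands on the near side. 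This uses Lemma~\ref{lem:spherical same side} (the fundamental domain for $\{s,t\}$ sees $C_L$ and $C_I$ on prescribed sides of the three walls $\mathcal W_s,\mathcal W_t,\mathcal W_{sts\cdot s}$) together with the fact that the three walls $\mathcal W_s,\mathcal W_t$ of the $A_3$-type residue $\langle s,t\rangle$ separate the two geometric fundamental domains.

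Concretely, here is the order of steps I would carry out. First, reduce to the case of all components big and non-exposed, citing Propositions~\ref{lem:exposed} and Corollary~\ref{cor:small_self}. Second, set up the partition $A\sqcup B$ as above and note, via Corollary~\ref{cor:extra}, that inconsistency of $\{s,t\}$ forces both $A$ and $B$ nonempty and forces, for $a\in A, b\in B$, that the walls $\mathcal W_{r_a}$ (for appropriate markers) and $\mathcal W_{r_b}$ lie in opposite halfspaces of $\mathcal W_s$. Third, verify that $\tau$ (conjugation of $B$ by $sts$) is a genuine elementary twist in $\langle s,t\rangle$ and that $\tau(S)$ is again $3$-rigid and of type $\mathrm{FC}$, so $\mathcal K(\tau(S))\geq\mathcal K(S)$. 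Fourth — the heart — compute $\mathcal K_1$: classify maximal spherical $L$ as lying in $A\cup J\cup J^\perp$ or $B\cup J\cup J^\perp$ (using that each lies in a single side because its non-commuting part with $J$ is connected), compute $C_{L_\tau}$ in each case, and show each cross-pair distance strictly drops while same-side pair distances are unchanged, concluding $\mathcal K_1(\tau(S))<\mathcal K_1(S)$, contradiction. Fifth, deduce compatibility of all big components, hence (with the deferred self-compatibility and Corollary~\ref{cor:small_self} and Proposition~\ref{lem:exposed}) the consistency of $\{s,t\}$.

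The main obstacle will be the fourth step: carefully showing $d(C_L, stsC_I)<d(C_L,C_I)$ for every cross-pair. This requires a precise description of how the minimal gallery from $C_L$ to $C_I$ crosses the walls $\mathcal W_s=\mathcal W_t$, $s\mathcal W_t$ of the residue $\langle s,t\rangle$ — one must argue that such a gallery crosses at least one of these walls (because $C_L$ and $C_I$ are separated by, say, $\mathcal W_s$, which follows from the definition of the $A/B$ split and Lemma~\ref{lem:spherical same side}), and that applying $sts$ to $C_I$ removes exactly one crossing without adding others, using convexity of halfspaces and the fact that $sts$ is the reflection-product realizing the `turn' inside the $A_3$ residue. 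A subtlety is that $L$ or $I$ might contain $s$ or $t$, so $C_L, C_I$ might be incident to these walls rather than strictly on one side; one handles this by noting that idle $L$ (those containing $J$) satisfy $C_{L_\tau}=C_L$ and contribute $d(C_L, stsC_I)$ with the same strict-decrease argument, the incidence only helping. I would also need the triangle-inequality bookkeeping to ensure no distance increases, which follows from the isometry $sts$ and the wall-crossing count.
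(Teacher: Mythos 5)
Your $\mathcal K_1$ argument reproduces the first half of the paper's proof (incompatible \emph{big} components force a strict drop of $\mathcal K_1$ under the elementary twist by $w_{st}=tst$), but the proposal has two genuine gaps. First, the opening reduction is incorrect: Proposition~\ref{lem:exposed} only lets you assume that no component is exposed, and Corollary~\ref{cor:small_self} gives only \emph{self}-compatibility of small components. Neither allows you to assume that all components are big, and no later section proves that small components are compatible with the others when $m_{st}=3$; on the contrary, Section~\ref{sec:small} (via Proposition~\ref{prop:bigcompatible}, whose proof explicitly invokes Lemma~\ref{lem:mst=3} to conclude $m_{st},m_{sp}>3$) depends on this lemma, so deferring self-compatibility of big components, or compatibility with small ones, to later sections is circular. (For $m_{st}=3$ self-compatibility of big components is in fact immediate: $s\mathcal W_t=t\mathcal W_s$, so the geometric fundamental domain on the side of $\mathcal W_r$ lies in $\Phi_s^{t,B}\cap\Phi_t^{s,B}$ by Lemma~\ref{lem:spherical same side}; the paper disposes of it in one line.)

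Second, and more seriously, the claimed strict decrease of $\mathcal K_1$ fails precisely in the case you excluded. Suppose all big components carry the same halfspace and the incompatibility involves a small component, so the twisted side $B$ consists of small components. Then every maximal spherical $L$ meeting $B$ contains $\{s,t\}$: if $s\notin L$, maximality and FC give $r\in L$ not adjacent to $s$; since $L$ is spherical, $r$ is adjacent to the element of $L\cap B$, hence lies in the same (small) component, contradicting that its elements are adjacent to $s$. Consequently $C_{L_\tau}=w_{st}C_L=C_L$ for every twisted $L$, so $\mathcal K_1(\tau(S))=\mathcal K_1(S)$ and your minimality argument yields no contradiction. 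This is exactly why the paper introduces the second complexity $\mathcal K_2$: the remaining case is handled by showing that $\tau$ strictly decreases distances between the sets $\E_{L,I}$, which needs the claim that $\E_{L,I}\subset\Phi_s^{t,B}$ for the relevant pairs, Remark~\ref{rem:L_isubsetI}, and (in the case $\{s,t\}\not\subseteq I$) the computation with $u\in L_1$ and the commuting reflections $sts$ and $sus$ giving $\E_{L_\tau,I_\tau}=w_{st}\E_{L,I}$. None of this appears in your plan, so as proposed the proof does not establish consistency of $\{s,t\}$.
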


\begin{proof}
By Proposition~\ref{lem:exposed} we can assume that no component of $S\setminus (\{s,t\}\cup \{s,t\}^\perp)$ is exposed.

For a big component $B$ of $S\setminus (\{s,t\}\cup \{s,t\}^\perp)$ and $r\in B$ with $\{s,t,r\}$ not spherical, let $F$ be the geometric fundamental domain for $\{s,t\}$ lying in $\Phi(s\mathcal W_t, \mathcal W_r)= \Phi(t\mathcal W_s, \mathcal W_r)$. By Lemma~\ref{lem:spherical same side} we have that $F$ lies in
$\Phi_s^{t,B}\cap\Phi_t^{s,B}$, which thus form a geometric pair. Hence $B$ is self-compatible and by Corollary~\ref{cor:small_self} it remains to prove that all $\Phi_s^{t,B}$ coincide (including small $B$).

Otherwise, let $A$ be the union of all components $A_i$ with one $\Phi_s^{t,A_i}$, which we call $\Phi_s^{t,A}$, and let $B$ be the union of components $B_i$ with the other $\Phi_s^{t,B_i}$, which we call $\Phi_s^{t,B}$. Let $\tau$ be the elementary twist that sends each element $b\in B$ to $w_{st}bw^{-1}_{st}$, where $w_{st}=tst$, and fixes the other elements of $S$. For a contradiction, we will first prove that if there are incompatible big components, then $\mathcal K_1(\tau(S))< \mathcal K_1(S)$. For maximal spherical $L\subset S$ we say that $L$ is \emph{twisted} if it contains an element of $B$.
We then have $C_{L_\tau}=w_{st}C_L$. If $I$ is maximal spherical and not twisted, then we have $C_{I_\tau}=C_I$.
Consequently $d(C_{L_\tau}, C_{I_\tau})$ might vary from $d(C_L, C_I)$ only if, say, $L$ is twisted and $I$ is not twisted, and $\{s,t\}\not\subseteq L,I$. Such $L,I$ exist exactly if there are incompatible big components. Then $C_L, C_I$ lie in the opposite halfspaces of $t\mathcal W_s=\mathcal W_{w_{st}}$, and consequently $d(C_{L_\tau}, C_{I_\tau})<d(C_L, C_I)$, as desired.

If all big components are compatible, we have $\mathcal K_1(\tau(S))= \mathcal K_1(S)$, and we need to analyse the effect of $\tau$ on $\mathcal K_2$. 

\smallskip

\textbf{Claim. } \emph{Let $L,I\subset S$ be maximal spherical subsets with $\{s,t\}\subset L$. Suppose that $I$ contains an element of $A$ and $L$ contains an element of~$B$ (or vice versa). Then we have $\E_{L,I}\subset \Phi_s^{t,B}$ (resp.\ $\E_{L,I}\subset \Phi_s^{t,A}$).}

\begin{proof}
Indeed, let $L_1\subseteq L$ be maximal irreducible containing $\{s,t\}$, and let $u\in L_1$ with $\{s,t,u\}$ irreducible, so that $u\in B$. Let $r\in I\setminus (\{s,t\}\cup \{s,t\}^\perp)$. Since the components of $S\setminus (\{s,t\}\cup \{s,t\}^\perp)$ are self-compatible, after possibly interchanging $s$ with $t$, we can assume that $u,s,t$ are consecutive in the Coxeter--Dynkin diagram of $L_1$. Then $s$ is not a leaf in the Coxeter--Dynkin diagram of $L_1$ and by Remark~\ref{def:alternateE} we have $\E_{L,I}\subset \Phi^\mu_s$ for $\mu=((s,L_1),r)$. Since $K^\mu_{s,t}\subset B$ (see Definition~\ref{def:K}), the claim follows. 
\end{proof}

Returning to the proof of the lemma, without loss of generality we can assume that all big components are contained in $A$. Consider maximal spherical subsets $L,I\subset S$.
If both $L,I$ are twisted, or both are not twisted, then by Remark~\ref{rem:L_isubsetI}(ii) we have
$d(\E_{L_\tau,I_\tau},\E_{I_\tau,L_\tau})=d(\E_{L,I},\E_{I,L})$.
Suppose now that $L$ is twisted and intersects a component $B_i\subseteq B$ and $I$ is not twisted. If $I\subseteq \{s,t\}\cup \{s,t\}^\perp$, the same equality holds, so we can assume $I\not\subseteq \{s,t\}\cup \{s,t\}^\perp$. Since $B_i$ is small, we have $\{s,t\}\subset L$. Consequently, by the claim we have $\E_{L,I}\subset \Phi_s^{t,B}$. The proof of the lemma splits now into two cases.

\textbf{Case 1. $I$ contains $\{s,t\}$.}
Interchanging the roles of $L$ and $I$, from the claim we have $\E_{I,L}\subset \Phi^{t,A}_{s}$. Consequently, $\E_{L,I}$ and $\E_{I,L}$ lie in the opposite geometric fundamental domains for $\{s,t\}$. In particular, they lie in the opposite halfspaces of $t\mathcal W_s=\mathcal W_{w_{st}}$. By Remark~\ref{rem:L_isubsetI}(iii), we have $\E_{L_\tau,I_\tau}=w_{st}\E_{L,I}$ and $\E_{I_\tau,L_\tau}=\E_{I,L}$. Thus $d(\E_{L_\tau,I_\tau}, \E_{I_\tau,L_\tau})<d(\E_{L,I}, \E_{I,L})$.

\textbf{Case 2. $I$ does not contain $\{s,t\}$.}
By FC, we have that $I$ contains an element $r$ not adjacent to $s$ or not adjacent to $t$.
By the claim and Lemma~\ref{lem:spherical same side}, we have $\E_{L,I}\subset t\Phi_s^{t,B}$.
Consider the marking $\mu=((s,t),r)$. Since $K^\mu_{s,t}\subseteq A$, we have $\mathcal W_r\subset t\Phi^{t,A}_{s}$, and so $\E_{I,L}\subset t\Phi^{t,A}_{s}$. Furthermore, we have $\E_{I_\tau,L_\tau}=\E_{I,L}$ as in Case~1. To finish as in Case~1, it remains to prove $\E_{L_\tau,I_\tau}=w_{st}\E_{L,I}$.

To this end, let $u\in L_1$ as in the proof of the claim. Note that in the Coxeter--Dynkin diagram of $(L_1)_\tau$ we have consecutive vertices $s,t$ and $\tau(u)$. We have that $(L_1)_\tau$ is not exposed by Lemma~\ref{lem:staygood}. By Lemma~\ref{lem:exposed_criteria2}, $u\in L_1, \{u,s\}\subset L_1$ are good with respect to $r$ and
$\tau(u)\in (L_1)_\tau, \{\tau(u),t\}\subset (L_1)_\tau$ are good with respect to $\tau(r)=r$. Consequently it suffices to prove $\Delta^{(\tau(u),t),r}=w_{st}\Delta^{(u,s),r}$. This follows from the fact that the reflections $sts$ and $sus$ commute, hence each of the halfspaces of $s\mathcal W_u$ is preserved by $w_{st}$, and thus $\Phi(tw_{st}\mathcal W_u,\mathcal W_r)=\Phi(w_{st}s\mathcal W_u,\mathcal W_r)=w_{st}\Phi(s\mathcal W_u,\mathcal W_r)$.
\end{proof}

For $m_{st}\neq 3$, we will need the following measure of consistency.

\begin{defn}
\label{def:peripheral}
Let $\{s,t\}\subset S$ be irreducible spherical and let $V$ be one of the two geometric fundamental domains for $\{s,t\}$. We define the \emph{consistency} $\mathcal C_V(s,t)=\mathcal C_V(t,s)$ as the number of maximal spherical $L\subset S$ with $C_L$ intersecting $sV\cup V\cup tV$. We say that inconsistent $\{s,t\}$ is \emph{peripheral} if $\mathcal C_V(s,t)$ is maximal among all inconsistent $\{s,t\}\subset S$ and both $V$.
\end{defn}

Obviously, if $S$ does not have consistent doubles, then there is peripheral $\{s,t\}$. The following remark describes the role of the union $sV\cup V\cup tV$.

\begin{remark}
\label{rem:self-comp} Let $\{s,t\}\subset S$ be irreducible spherical and let $V$ be a geometric fundamental domain for $\{s,t\}$. Suppose that we have $r\in S$ with $\mathcal W_r\subset sV\cup V\cup tV$. Then $\mu=((s,t),r),\mu'=((t,s),r)$ are markings, and we have $V\subset \Phi_s^\mu, \Phi_t^{\mu'}$. Consequently, the component of $S\setminus(\{s,t\}\cup\{s,t\}^\perp)$ containing $r$ is self-compatible. Conversely, if $\{s,t,r\}$ is not spherical, and the component $B$ of $S\setminus(\{s,t\}\cup\{s,t\}^\perp)$ containing $r$ is self-compatible, then $\mathcal W_r\subset sV\cup V\cup tV$ for a geometric fundamental domain $V$ for $\{s,t\}$. Furthermore, $V$ depends only on $B$, not on $r$.
\end{remark}

\begin{prop}
\label{prop:bigcompatible}
Under the assumptions of Theorem~\ref{thm:minimal complexity}, if $\{s,t\}$ is peripheral, then
big components of $S\setminus(\{s,t\}\cup\{s,t\}^\perp)$ are compatible. Moreover, if there is a big component that is not self-compatible, then all $\mathcal W_r$ with $\{s,t,r\}$ not spherical are contained in a single sector for~$\{ s,t\}$.
\end{prop}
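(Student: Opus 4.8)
\textbf{Proof plan for Proposition~\ref{prop:bigcompatible}.}
The plan is to run a minimal-complexity argument: assuming one of the two conclusions fails, we build an elementary twist $\tau$ of $S$ with $\mathcal K(\tau(S))<\mathcal K(S)$, contradicting minimality. First I would fix a big component $B_0$ and, using Lemma~\ref{lem:triple_not2good} together with Proposition~\ref{lem:exposed}, note that $B_0$ is self-compatible whenever it (or one of the triples $\{s,t,r\}$ it produces) is exposed; so I may assume no big component is exposed. For each big component $B_i$ pick $r_i\in B_i$ with $\{s,t,r_i\}$ not spherical; by Remark~\ref{rem:self-comp}, if $B_i$ is self-compatible then $\mathcal W_{r_i}\subset s V_i\cup V_i\cup tV_i$ for a geometric fundamental domain $V_i$ of $\{s,t\}$ depending only on $B_i$, and $V_i$ records the halfspaces $\Phi^{t,B_i}_s,\Phi^{s,B_i}_t$. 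Thus incompatibility of two self-compatible big components, or failure of a single big component to be self-compatible, forces the walls $\mathcal W_{r_i}$ to occupy at least two distinct sectors for $\{s,t\}$ (in the non-self-compatible case, $\mathcal W_{r}$ and $\mathcal W_{r'}$ for two choices $r,r'$ in the same component already straddle a wall of $\mathcal W_{\{s,t\}}$).

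The core of the argument is a \emph{folding}. Partition the big components (and indeed all of $S\setminus(\{s,t\}\cup\{s,t\}^\perp)$, assigning small components appropriately via Corollary~\ref{cor:small_self} and Remark~\ref{rem:self-comp}) into $A\sqcup B$ so that all $\mathcal W_{r_i}$ for $B_i\subseteq B$ lie on one side of a chosen wall $\mathcal W$ of $\mathcal W_{\{s,t\}}$ that separates two of the offending sectors, and those for $A_i\subseteq A$ lie on the other side. Since $m_{st}\neq 3$, I would let $\mathcal W$ be $\mathcal W_{r}$ for the reflection $r$ in $\langle s,t\rangle$ obtained by conjugating $s$ or $t$ appropriately, and let $\tau$ be the elementary twist in $\langle s,t\rangle$ conjugating every $b\in B$ by the longest element $w_{st}$ (this is an elementary twist precisely because the partition $A\sqcup B$ respects components). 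Then, exactly as in the proof of Lemma~\ref{lem:mst=3}: a maximal spherical $L$ is ``twisted'' iff it meets $B$, and $C_{L_\tau}=w_{st}C_L$, while $C_{I_\tau}=C_I$ for non-twisted $I$; so $d(C_{L_\tau},C_{I_\tau})\leq d(C_L,C_I)$ always, with strict inequality for a twisted $L$ and non-twisted $I$ that lie in opposite halfspaces of $\mathcal W$ — which is exactly what the straddling gives. Hence $\mathcal K_1(\tau(S))\leq\mathcal K_1(S)$, with strict inequality unless the big components with walls on both sides all contain $\{s,t\}$ — but those contribute via $\mathcal K_2$.

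When $\mathcal K_1$ does not drop, I would analyze $\mathcal K_2$ using the ``Claim'' technique from Lemma~\ref{lem:mst=3}: for maximal spherical $L\supset\{s,t\}$ meeting $B$ and $I$ meeting $A$, Remark~\ref{def:alternateE} gives $\E_{L,I}\subset\Phi^{\mu}_s$ for $\mu=((s,L_1),r)$ with $K^\mu_{s,t}\subset B$, hence $\E_{L,I}$ lies on the $B$-side of $\mathcal W$, and symmetrically $\E_{I,L}$ on the $A$-side; since $(L_1)_\tau$ stays non-exposed (Lemma~\ref{lem:staygood}) and $\tau$ only conjugates, Remark~\ref{rem:L_isubsetI}(ii)--(iii) tracks $\E_{L_\tau,I_\tau}=w_{st}\E_{L,I}$, $\E_{I_\tau,L_\tau}=\E_{I,L}$, so $d(\E_{L_\tau,I_\tau},\E_{I_\tau,L_\tau})<d(\E_{L,I},\E_{I,L})$, giving $\mathcal K_2(\tau(S))<\mathcal K_2(S)$. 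This contradiction proves compatibility of big components, and the same contradiction applied to a single non-self-compatible big component shows all relevant $\mathcal W_r$ lie in one sector. The main obstacle I anticipate is the bookkeeping needed to verify that the partition $A\sqcup B$ genuinely respects the components of $S\setminus(\{s,t\}\cup\{s,t\}^\perp)$ when mixing big, small, and (excluded) exposed components, and the precise use of \emph{peripherality}: it is what guarantees that after the folding no previously consistent double becomes inconsistent (the consistency measure $\mathcal C_V$ can only go up under this twist for the pair $\{s,t\}$, and peripheral means it was already maximal), so that the complexity comparison is not spoiled by newly-created inconsistencies elsewhere — handling that interaction cleanly is the delicate point.
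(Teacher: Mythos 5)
There is a genuine gap, and it is exactly at the point where $m_{st}>3$ makes the problem hard. Your plan is to choose a wall of $\mathcal W_{\{s,t\}}$ separating the offending sectors, partition the components into $A\sqcup B$ accordingly, and perform the single elementary twist conjugating $B$ by $w_{st}$, then argue as in Lemma~\ref{lem:mst=3}. But that argument is specific to $m_{st}=3$, where $w_{st}=tst$ is a reflection and there are only six sectors: for $m_{st}\geq 4$ a wall $\mathcal W_r$ belonging to a non-self-compatible component lies in a ``deep'' sector $w_B\Lambda_0$ with $w_B\notin\{s,\mathrm{Id},t,w_{st}s,w_{st},w_{st}t\}$, and conjugating by $w_{st}$ sends it to $w_{st}w_B\Lambda_0$, which in general is no closer to the geometric sectors (e.g.\ for $m_{st}=5$ and $w_B=st$ the image has length $3$); so your claimed inequality $d(C_{L_\tau},C_{I_\tau})\leq d(C_L,C_I)$ with a strict drop simply fails. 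The paper instead replaces the single $w_{st}$-twist by a \emph{composition} of elementary twists in $\langle s\rangle$, $\langle t\rangle$ (and, for self-compatible components in the far position, one twist in $\langle s,t\rangle$), whose combined effect is controlled by an explicit folding $f\colon\langle s,t\rangle\to\{s,\mathrm{Id},t\}$ via Lemma~\ref{lem:folding}; the contradiction is then obtained purely at the level of $\mathcal K_1$, not by a fallback to $\mathcal K_2$ (the $\mathcal K_2$ analysis you import from the Claim in Lemma~\ref{lem:mst=3} belongs to the later Proposition~\ref{prop:smallcompatible} and needs the extra case analysis there, e.g.\ Figure~2 for $m_{st}=4,5$).

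Two further ingredients are missing. First, before any twisting the paper must pin down the geometry of a non-self-compatible big component $B$: using Lemma~\ref{lem:mst=3}, FC, Lemma~\ref{lem:stnotgood} and Proposition~\ref{prop:same side2} one shows no element of $B$ is adjacent to $s$ or $t$, hence all $\mathcal W_r$, $r\in B$, lie in one sector $w_B\Lambda_0$ (your picture of two walls of the same component ``straddling'' a wall of $\mathcal W_{\{s,t\}}$ is not what happens, and without this step the twist $\tau_B$ cannot even be defined component-wise). Second, peripherality is not used to prevent previously consistent doubles from becoming inconsistent after the twist (minimality of complexity already handles that); it is used, only in the even $m_{st}$ case, inside the applications of Proposition~\ref{prop:same side2}: if the relevant path produced a $p$ adjacent to $s$ with $\{s,p\}$ inconsistent, one would get $\mathcal C_{V'}(s,p)>\mathcal C_V(s,t)$, contradicting maximality. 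This is what forces $C_L$ for the ``refined components'' of a far self-compatible component to sit in a single sector $w_{B'}V$, which in turn dictates the preliminary twists by $s$ or $t$ before the $w_{st}$-twist. None of this is in your outline, so as written the proposal does not yield the proposition.
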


In the proof we will need the following key notion.

\begin{defn}
\label{def:folding}
Let $\{s,t\}\subset S$ be irreducible spherical. A \emph{folding} is a map $f\colon \langle s,t\rangle\to \{s,\mathrm{Id},t\}$ such that for each $w\in \langle s,t\rangle$ we have
\begin{itemize}
\item
$f(ws)=f(w)$ or $f(ws)=f(w)s$, and
\item
$f(wt)=f(w)$ or $f(wt)=f(w)t$.
\end{itemize}
In other words, $f$ is a simplicial type-preserving map on the Cayley graph of $\langle s,t\rangle$.
\end{defn}

\begin{exa}
Let $m_{st}=3$ and $w_{st}=tst$. Let $f\colon \langle s,t\rangle\to \{s,\mathrm{Id},t\}$ be the map whose restriction to $\{s,\mathrm{Id},t\}$ is the identity map and whose restriction to $\{w_{st}s,w_{st},w_{st}t\}$ is the reflection $w_{st}$. It is easy to see that $f$ is a folding.
\end{exa}

\begin{lem}
\label{lem:folding}
Let $f\colon \langle s,t\rangle\to \{s,\mathrm{Id},t\}$ be a folding. Let $V$ be a geometric fundamental domain for $\{s,t\}$. Let $\tilde f\colon \Da^{(0)}\to sV\cup V\cup tV$ be the map sending each $wV$ to $f(w)V$ via $f(w)w^{-1}$, where $w\in \langle s, t\rangle$. Then $\tilde f$ induces a simplicial map on $\Da^{(1)}$. Moreover, for $x\in wV, y\in w'V$ we have $d(\tilde f(x),\tilde f(y))=d(x,y)$ if and only if the restriction of $f$ to the vertices of some path $\pi$ from $w$ to $w'$ in the Cayley graph of $\langle s,t\rangle$ is injective.
\end{lem}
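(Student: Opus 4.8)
The plan is to work directly from the definition of the folding $f$ and the associated map $\tilde f$, translating the combinatorics of $\langle s,t\rangle$ into properties of galleries in $\Da^{(1)}$. First I would verify that $\tilde f$ is well defined: a vertex $x$ of $\Da$ lies in some $wV$, but $\langle s,t\rangle$ acts freely on the set of translates $wV$ only up to the possibility that $x$ lies on a wall $\W_s$ or $\W_t$ or is shared by several translates; since $V$ is a geometric fundamental domain, $x$ determines the coset $w\langle s,t\rangle_x$ where $\langle s,t\rangle_x$ is a finite parabolic, and one checks using the folding property that $f(w)w^{-1}x$ is independent of the representative $w$ (the two defining bullets of a folding say precisely that $f$ is constant on the fibers of the quotient identifications that occur along $\W_s$ and $\W_t$). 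Concretely, if $wV$ and $w'V$ share a vertex $x$, then $w' = ws$ or $w' = wt$ and $x$ lies on the corresponding wall, so $f(w') \in \{f(w), f(w)s\}$ (resp. $\{f(w),f(w)t\}$) and in either case $f(w')w'^{-1}x = f(w)w^{-1}x$.

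Next I would show $\tilde f$ is simplicial on $\Da^{(1)}$. An edge of $\Da^{(1)}$ either lies inside a single translate $wV$ — where $\tilde f$ acts by the isometry $f(w)w^{-1}$, hence sends the edge to an edge — or it joins $wV$ to $w'V$ with $\{w,w'\}$ an edge of the Cayley graph of $\langle s,t\rangle$, say $w' = wt$. Then the edge is dual to $w\W_t$, its endpoints $x \in wV$, $y \in w'V$ satisfy $w^{-1}x$ incident to $\W_t$ and $w^{-1}y = t w'^{-1}y$ incident to $\W_t$ as well, and one computes $\tilde f(x) = f(w)w^{-1}x$, $\tilde f(y) = f(wt)w'^{-1}y$. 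By the folding property $f(wt) = f(w)$ or $f(w)t$; in the first case $\tilde f(x)$ and $\tilde f(y)$ lie in the same translate $f(w)V$ and are either equal or joined by an edge; in the second case they are joined by the edge dual to $f(w)\W_t$. Either way adjacency is preserved (allowing degeneration to a single vertex), so $\tilde f$ is simplicial, which in particular gives the inequality $d(\tilde f(x),\tilde f(y)) \leq d(x,y)$ for all $x,y$.

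For the equality statement I would argue via galleries. Given $x \in wV$, $y \in w'V$, take a minimal gallery $\gamma$ from $x$ to $y$ in $\Da^{(1)}$; it crosses a collection of walls, and since all walls of the form $\W_s$-type for $\langle s,t\rangle$ (i.e. the walls $v\W_s, v\W_t$ with $v \in \langle s,t\rangle$) that separate $x$ from $y$ must be crossed, $\gamma$ projects to a (not necessarily minimal) path in the Cayley graph from $w$ to $w'$. The length of $\gamma$ is at least $d(x,y)$ and $\tilde f$ applied to $\gamma$ gives a gallery from $\tilde f(x)$ to $\tilde f(y)$ of the same combinatorial length, but whose length drops precisely at each step where $f$ "folds", i.e. where consecutive Cayley-graph vertices $v,v'$ on the projected path satisfy $f(v) = f(v')$. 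Hence $d(\tilde f(x),\tilde f(y)) = d(x,y)$ holds iff there is a choice of minimal gallery $\gamma$ whose projected path $\pi$ has $f$ injective on its vertex set; and conversely, given such a path $\pi$, one lifts it to a gallery realizing $d(x,y)$ along which $\tilde f$ loses no length. I would make the bookkeeping precise by noting that a minimal gallery from $x$ to $y$ crosses each wall at most once, and the walls $v\W_s, v\W_t$ crossed correspond exactly to the edges of $\pi$; $f$ being injective on the vertices of $\pi$ means no two such edges are identified by $\tilde f$, which combined with the fact that $\tilde f$ is an isometry on each translate $wV$ (and on the wall-crossings it does not fold) yields $d(\tilde f(x),\tilde f(y)) = d(x,y)$.

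The main obstacle I expect is the careful handling of the wall-crossing bookkeeping in the last paragraph: one must be sure that a minimal gallery from $x$ to $y$ can be chosen so that its "shadow" in the Cayley graph of $\langle s,t\rangle$ is itself a minimal-with-respect-to-$f$-injectivity path when such exists, and that the portions of the gallery lying inside the translates $wV$ contribute lengths that are simply added and not affected by $\tilde f$. This is essentially the statement that the walls of $\Da$ fall into those "parallel" to the $\{s,t\}$-structure (the $v\W_s, v\W_t$) and those transverse to it, with a minimal gallery crossing each group independently; invoking the standard gallery-decomposition result (as in \cite[Thm~2.9]{R}, already used above) for the residue structure relative to $\langle s,t\rangle$ should make this rigorous.
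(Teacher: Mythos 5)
Your overall strategy (simpliciality by cases, then tracking the ``shadow'' in the Cayley graph of $\langle s,t\rangle$ of a minimal gallery) is the same as the paper's, but the pivotal step of the equality criterion is asserted rather than proved, and the justification you offer would not work. Concretely, you claim that if no folding occurs along the wall-crossings of a minimal gallery $\gamma$ (i.e.\ $f(v)\neq f(v')$ for all consecutive vertices $v,v'$ of the shadow path), then $d(\tilde f(x),\tilde f(y))=d(x,y)$, and you justify this by ``$\tilde f$ is an isometry on each translate $wV$ and does not fold the crossings''. That principle is false in general: a map which is an isometry on each of two adjacent regions and does not collapse the connecting edges can still strictly decrease distances (this is exactly what a fold of a plane along a line does), because the image of a minimal gallery need not be minimal. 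What makes the statement true here is the identity, already implicit in the simpliciality computation, that at an unfolded crossing, where $f(ws)=f(w)s$, one has $f(ws)(ws)^{-1}=f(w)w^{-1}$; hence along an unfolded shadow path the element $f(v)v^{-1}$ is constant, so $\tilde f$ agrees on the whole union of translates traversed by $\gamma$ with a single left translation, a global isometry, which gives the missing lower bound $d(\tilde f(x),\tilde f(y))\geq d(x,y)$. You never record this identity (nor any substitute, such as showing that the image gallery crosses pairwise distinct walls), and the appeal to \cite{R} does not supply it.

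Two smaller omissions sit in the same part of the argument. First, your claim that the length ``drops precisely at each folded step'' needs the fact that a folded crossing collapses the dual edge to a single chamber, whereas your simpliciality paragraph only concludes ``equal or joined by an edge''; the collapse holds because the endpoints of an edge dual to the wall between $wV$ and $wsV$ are swapped by the reflection $wsw^{-1}$, so they lie in one $\langle s,t\rangle$-orbit, which meets $f(w)V$ in a single chamber when $f(ws)=f(w)$. Second, the lemma quantifies over all paths from $w$ to $w'$, not only shadows of minimal galleries, so in the direction ``strict inequality implies no injective path'' you must also exclude the complementary arc of the Cayley $2m_{st}$-cycle; the paper does this by noting that the shadow of a minimal gallery has length at most $m_{st}$, so the other arc has length at least $m_{st}\geq 3$ and hence more than three vertices, too many for the three-element image of $f$. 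Your proposal does not address this. (Incidentally, the well-definedness worry in your first paragraph is vacuous: $\langle s,t\rangle$ acts freely on $\Da^{(0)}$ and $V$ meets each orbit exactly once, so the translates $wV$ are pairwise disjoint.)
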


\begin{proof}
To prove the first assertion, consider adjacent chambers $g,gp$ of  $\Da^{(1)}$, where $g\in W, p\in S'$. If $g,gp$ belong to the same $wV$, then $\tilde f (g)=f(w)w^{-1}g$ and $\tilde f(gp)=f(w)w^{-1}gp$ are obviously adjacent. If $g,gp$ belong to distinct translates of $V$, then
we have, say, $g\in wV, gp\in wsV$.  In that case we also have $wsw^{-1}g=gp$ and so $g$ and $gp$ are in the same orbit of the action of $\langle s, t\rangle$ on $\Da^{(0)}$. Consequently, if $f(ws)=f(w)$, then since $f(w)V$ intersects each $\langle s, t\rangle$-orbit in one chamber, we have $\tilde f(g)=\tilde f(gp)$. On the other hand, if $f(ws)=f(w)s$, then $f(ws)(ws)^{-1}=f(w)w^{-1}$, and hence $\tilde f(g)=f(w)w^{-1}g$ and $\tilde f(gp)=f(ws)(ws)^{-1}gp$ are adjacent.

For the second assertion, let $\gamma$ be a minimal gallery from $x$ to $y$, let $wV,\ldots, w'V$ be the distinct consecutive translates of $V$ traversed by $\gamma$ and let $\pi=w\cdots w'$ be the corresponding path in the Cayley graph of $\langle s,t\rangle$. If $d(\tilde f(x),\tilde f(y))=d(x,y)$, then in view of the previous paragraph the consecutive vertices of the path $f(\pi)$ are distinct, as desired. Conversely, if $d(\tilde f(x),\tilde f(y))<d(x,y)$, then a pair of consecutive vertices of $f(\pi)$ coincides. Since $\gamma$ was minimal, the length of of $\pi$ is at most $m_{st}$, and consequently the length of the second path $\pi'$ from $w$ to $w'$ in the Cayley graph of $\langle s,t\rangle$ is $\geq m_{st}>2$. Since $f$ takes only values $s,\mathrm{Id},t$, the restriction of $f$ to $\pi'$ is also not injective.
\end{proof}

\begin{proof}[Proof of Proposition~\ref{prop:bigcompatible}]
Let $\Lambda_0$ be the geometric sector containing~$V$ from Definition~\ref{def:peripheral}. First consider the case where $m_{st}$ is odd, so the longest word $w_{st}$ in $\langle s,t\rangle$ is a reflection. This case will not require the peripherality hypothesis.

We begin with focusing entirely on the case where a component $B$ of $S\setminus (\{s,t\}\cup \{s,t\}^\perp)$ is not self-compatible. Observe that if $p\in B$ is adjacent to $s$, then it is also adjacent to $t$ (and vice versa): indeed, otherwise the pair of halfspaces determined by markings $((s,t),p), ((t,s),p)$ would be geometric by Corollary~\ref{cor:extra} (and Lemma~\ref{lem:spherical same side}).

We now claim that if $r\in B$ is not adjacent to $s$, then $s\in \{s,t\}$ is not good with respect to $r$. Indeed, otherwise let $\mu_1=((s,\emptyset), r), \mu_2=((s,t),r)$. If $\Phi_s^{\mu_1}=\Phi_s^{\mu_2}$, then Lemma~\ref{sublem:extra} (and Lemma~\ref{lem:spherical same side}) contradict the assumption that $B$ is not self-compatible. Thus by Proposition~\ref{prop:same side2}, there is a vertex $p\neq t$ on a minimal length path from $r$ to~$t$ in the defining graph of $S$ outside $s\cup s^\perp$, with $p$ adjacent to $s$ and $\{s,p\}$ inconsistent. By Lemma~\ref{lem:mst=3}, we have $m_{st},m_{sp}>3,$ so from $\mathrm{FC}$ it follows that $p$ is not adjacent to $t$. This contradicts the observation above, and justifies the claim.

Analogously $t\in \{s,t\}$ is not good with respect to $r$. Consequently, by Lemma~\ref{lem:stnotgood}, the elements of $B$ are adjacent neither to $s$ nor $t$. Thus $B$ is also a component of $S\setminus (s\cup s^\perp)$ and a component of $S\setminus (t\cup t^\perp)$. Furthermore, all $\mathcal W_r$ for $r\in B$ lie in a single sector $w_B\Lambda_0$ for some $w_B\in \langle s, t\rangle$ and by Remark~\ref{rem:self-comp} we have $w_B\neq s, \mathrm{Id},t,w_{st}s, w_{st},w_{st}t$. Consequently, for $L$ maximal spherical intersecting $B$, we have $C_L\subset w_BV$.

Let $j$ be the first letter in the minimal length word representing $w_B$. We set $\tau_B$ to be the composition of elementary twists conjugating $B$ by the letter $s$ or $t$ in the order in which they appear as consecutive letters in $w_Bj$. As a result, for $L$ maximal spherical intersecting $B$, we have $C_{L_{\tau_B}}=jw^{-1}_BC_L\subset jV$. (Here by $L_{\tau_B}$ with $\tau_B$ a composition $\tau_n\circ \cdots \circ \tau_1$ of elementary twists and $\sigma=\tau_{n-1}\circ \cdots \circ\tau_1$ we mean, inductively, $(L_\sigma)_{\tau_n}$.)

Consider now a self-compatible component $B$ of $S\setminus (\{s,t\}\cup \{s,t\}^\perp)$. By Remark~\ref{rem:self-comp} either
\begin{enumerate}[(i)]
\item
for each $L$ maximal spherical intersecting $B$, we have that $C_L$ intersects $sV\cup V\cup tV$, or
\item
each such $C_L$ intersects $w_{st}sV\cup w_{st}V\cup w_{st}tV$.
\end{enumerate}
If $B$ is big, then there is $L$ for which we can replace the word `intersects' by `is contained in' in the preceding statement. In case (ii),
we perform an elementary twist $\tau_B$ with $J=\{s,t\}$, which sends each $p\in B$ to $w_{st}pw_{st}^{-1}$. As a result, for $L$ maximal spherical intersecting~$B$, we have $C_{L_{\tau_B}}=w_{st}C_L$, which intersects $sV\cup V\cup tV$. Let $\tau$ be the composition of all $\tau_B$ above.

To summarise, consider the folding $f\colon \langle s,t\rangle\to \{s,\mathrm{Id},t\}$ defined by:
\begin{itemize}
\item $f(w)=w$ for $w=s,\mathrm{Id},t$,
\item $f(w)=w_{st}w$ for $w=w_{st}s,w_{st},w_{st}t$,
\item $f(w)=j$ for other $w$, where $j$ is the first letter in the minimal length word representing $w$.
\end{itemize}
By Lemma~\ref{lem:folding}, for $x\in wV, y\in w'V$ we have $d(\tilde f(x),\tilde f(y))\leq d(x,y)$ with equality if and only if $w=w'$ or both $w,w'$ lie in $\{s,\mathrm{Id},t\}$ or they both lie in $\{w_{st}s,w_{st},w_{st}t\}$.
Furthermore, for each $L$ maximal spherical we have $C_{L_\tau}\supseteq\tilde{f}(C_L)$
(where the inclusion is strict exactly when $L\supseteq \{s,t\}$). Thus we get
$\mathcal K_1(\tau(S))\leq \mathcal K_1(S)$. Moreover, we have strict inequality as soon as there are two incompatible big components or a big component $B$ that is not self-compatible, and $\mathcal W_r\not\subset w_B\Lambda_0$ with $\{s,t,r\}$ not spherical.

Secondly, consider the case where $m_{st}$ is even. We treat components~$B$ that are not self-compatible exactly as before. Suppose now that $B$ is a self-compatible component of $S\setminus (\{s,t\}\cup \{s,t\}^\perp)$ as in case~(ii).
A \emph{refined component} of $B$ is a component of $B\setminus (s^\perp\cup t^\perp)$.

Let $L\subset S$ be maximal spherical intersecting $B$. Suppose that $C_L$ does not intersect $w_{st}V$. Then it is contained in one of $sw_{st}V, tw_{st}V$, say $sw_{st}V$. By the maximality of $L$, there is $r\in L$ that is not adjacent to $s$ and so $\mathcal W_r\subset sw_{st}\Lambda_0$. In particular $r$ is not adjacent to $t$ and so $r$ lies in a refined component $B'$ of $B$.
We claim that $s\in \{s,t\}$ is not good with respect to $r$. Indeed, otherwise as before let $\mu_1=((s,\emptyset), r), \mu_2=((s,t),r)$ so that $\Phi_s^{\mu_1}\neq \Phi_s^{\mu_2}$. Thus by Proposition~\ref{prop:same side2}, there is a vertex $p\neq t$ on a minimal length path $\omega$ from $r$ to $t$ in the defining graph of $S$ outside $s\cup s^\perp$, with $p$ adjacent to $s$ and $\{s,p\}$ inconsistent. Note that all the vertices of $\omega$ distinct from $t$ lie in $B'$ except for possibly the vertex preceding $t$ that might lie in $B\cap t^\perp$, which is excluded below.

By Lemma~\ref{lem:mst=3}, we have again $m_{st},m_{sp}>3,$ so from $\mathrm{FC}$ it follows that $p$ is not adjacent to $t$. Then $p\mathcal W_s, s\mathcal W_p$ are disjoint from $s\mathcal W_t, t\mathcal W_s$. Moreover, since $p\in B'$, we have $\mathcal W_p\subset w_{st}s\Lambda_0\cup w_{st}\Lambda_0\cup w_{st}t\Lambda_0$, and so $p\mathcal W_s, s\mathcal W_p\subset w_{st}s\Lambda_0\cup w_{st}\Lambda_0\cup w_{st}t\Lambda_0$. Consequently, there is a geometric fundamental domain $V'$ for $\{ s,p \}$ such that $sV'\cup V'\cup pV'$ contains~$sV\cup V\cup tV$ and intersects $C_L$ for some $L$ maximal spherical containing $s,p$. Since $C_L$ is disjoint from $sV\cup V\cup tV$, we have $\mathcal C_{V'}(s,p)>\mathcal C_V(s,t)$, contradicting the hypothesis that $\{s,t\}$ is peripheral. This justifies the claim.

By the claim, there is no element in $B'$ adjacent to $t$ or to $B\cap t^\perp$. Thus $B'$ is a component of $S\setminus (s\cup s^\perp)$.

Furthermore, we will prove that for each $L'\subset S$ maximal spherical intersecting $B'$ we have that $C_{L'}$ intersects $sw_{st}V$. Otherwise, for $r'\in L'$ that is not adjacent to $s$ we have $r'\in B'$ and
$\mathcal W_{r'}\subset w_{st}\Lambda_0$. Consequently, for $\mu_1=((s,\emptyset), r), \mu_2=((s,\emptyset),r')$ we have $\Phi_s^{\mu_1}\neq \Phi_s^{\mu_2}$. Then by Proposition~\ref{prop:same side2}, there is an element $p\in B'$ with $p$ adjacent to $s$ and $\{s,p\}$ inconsistent. As before, this contradicts the hypothesis that $\{s,t\}$ is peripheral.

Consequently, for each self-compatible component $B$ of $S\setminus (\{s,t\}\cup \{s,t\}^\perp)$ as in case (ii), and its refined component $B'$, there is at least one element $w_{B'}$ among $sw_{st},w_{st},tw_{st}$ with $C_L\subset w_{B'}V$ for all $L$ maximal spherical intersecting $B'$.

We perform now a sequence of elementary twists as follows. We treat the components that are not self-compatible as before. For a self-compatible component $B$ of $S\setminus (\{s,t\}\cup \{s,t\}^\perp)$ as in case (ii) we do the following. First, for each refined component $B'\subset B$ satisfying $w_{B'}=sw_{st}$ (resp.\ $w_{B'}=tw_{st}$) we apply the elementary twist with $J=\{s\}$ (resp.\ $J=\{t\}$) that conjugates all the elements of $B'$ by $s$ (resp.\ $t$) and fixes all the other elements of $S$. Afterwards, we apply the elementary twist with $J=\{s,t\}$ that conjugates the entire image of $B$ under the preceding elementary twists by $w_{st}$. Let $\tau$ be the composition of all these elementary twists. Then $C_{L_\tau}\supseteq\tilde{f}(C_L)$ with $f\colon \langle s,t\rangle\to \{s,\mathrm{Id},t\}$ the folding defined by:
\begin{itemize}
\item $f(w)=w$ for $w=s,\mathrm{Id},t$,
\item $f(w)=\mathrm{Id}$ for $w=w_{st}s,w_{st},w_{st}t$,
\item $f(w)=j$ for other $w$, where $j$ is the first letter in the minimal length word representing $w$.
\end{itemize}
We can thus apply Lemma~\ref{lem:folding} as before.
\end{proof}

\section{Small components}
\label{sec:small}

\begin{prop}
\label{prop:smallcompatible}
Under the assumptions of Theorem~\ref{thm:minimal complexity}, doubles are consistent.
\end{prop}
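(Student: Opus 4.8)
The plan is to argue by contradiction: suppose $S$ has an inconsistent double, and pass to a peripheral one $\{s,t\}$, which exists by the observation following Definition~\ref{def:peripheral}. By Lemma~\ref{lem:mst=3} we have $m_{st}\neq 3$, and by Proposition~\ref{lem:exposed} no component of $S\setminus(\{s,t\}\cup\{s,t\}^\perp)$ is exposed. By Corollary~\ref{cor:small_self} every small component is self-compatible, and by Proposition~\ref{prop:bigcompatible} the big components are pairwise compatible; moreover, if some big component is not self-compatible, then every wall $\mathcal W_r$ with $\{s,t,r\}$ not spherical lies in a single sector for $\{s,t\}$. Since a double all of whose components are compatible and self-compatible is consistent (the remark after Definition~\ref{def:compatible}), it remains to prove (I) every big component is self-compatible, and (II) any two components are compatible; either of these failing will be made to produce a complexity-lowering elementary twist, contradicting the minimality in Theorem~\ref{thm:minimal complexity}.

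For (I): assuming a big component is not self-compatible, all walls $\mathcal W_r$ with $\{s,t,r\}$ not spherical lie in a single sector $\Lambda=w\Lambda_0$ with $\Lambda_0$ geometric, and by Remark~\ref{rem:self-comp} necessarily $w\notin\{s,\mathrm{Id},t,w_{st}s,w_{st},w_{st}t\}$; moreover, exactly as in the proof of Proposition~\ref{prop:bigcompatible}, the elements of every big component are adjacent to neither $s$ nor $t$, so each big component is at once a component of $S\setminus(s\cup s^\perp)$ and of $S\setminus(t\cup t^\perp)$. I would then apply the sequence of $\mathbb{Z}_2$-twists in $\langle s\rangle$ and $\langle t\rangle$ — completed, when $m_{st}$ is even, by a dihedral twist in $\langle s,t\rangle$ after first separating refined components as in Proposition~\ref{prop:bigcompatible} — conjugating exactly the union of the big components by the letters of a reduced word for $w$, so as to carry $\Lambda$ into $\Lambda_0$. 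The analysis is then parallel to that of Lemma~\ref{lem:triple_not2good}: since $w\in\langle s,t\rangle$ stabilises the cell of every maximal spherical $L\supseteq\{s,t\}$, and the cells meeting big components translate uniformly, $\mathcal K_1$ is unchanged, while for idle $L$ and $I$ meeting a big component the pair $\E_{L,I},\E_{I,L}$ is brought to a common side (using Lemma~\ref{lem:staygood} to keep track of exposed factors), so $\mathcal K_2$ strictly drops. Hence every big component is self-compatible.

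For (II): now every component determines one of the two geometric fundamental domains $V,w_{st}V$ for $\{s,t\}$ (by Remark~\ref{rem:self-comp}), and by Proposition~\ref{prop:bigcompatible} we may label them so every big component determines $V$. If $\{s,t\}$ were inconsistent, the union $B$ of components determining $w_{st}V$ would be nonempty and would consist of small components only. Let $\tau$ be the elementary twist in $\langle s,t\rangle$ conjugating $B$ by $w_{st}$ and fixing the rest of $S$; this is a valid elementary twist since $B$ is a union of components. Following the last part of the proof of Lemma~\ref{lem:mst=3} and using Lemma~\ref{lem:folding}, I would show that $\tau$ carries the cells $C_L$ meeting $B$ from the $w_{st}V$-tripod of sectors into the $V$-tripod while leaving the cells of the $V$-components fixed, so that $\mathcal K_1$ does not increase and in fact strictly decreases (or, in the even case where $\mathcal K_1$ is preserved, $\mathcal K_2$ strictly decreases, via the bookkeeping of Remark~\ref{rem:L_isubsetI}). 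This contradiction establishes the consistency of $\{s,t\}$ and completes the proof.

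The main obstacle is the bookkeeping in step (I): organising the sequence of elementary twists that returns the far sector $\Lambda$ to the geometric one while conjugating only the big components, and verifying — as in Lemma~\ref{lem:triple_not2good} — that this leaves $\mathcal K_1$ fixed and strictly lowers $\mathcal K_2$. The case where $m_{st}$ is even is the most delicate, since $w_{st}$ is then not a reflection and the big components must first be broken into refined components before the dihedral twist can be applied.
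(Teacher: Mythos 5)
Your skeleton matches the paper's strategy (pass to a peripheral double, quote Proposition~\ref{lem:exposed}, Corollary~\ref{cor:small_self} and Proposition~\ref{prop:bigcompatible}, then kill the remaining incompatibilities by complexity-decreasing twists), but the part you defer to ``parallel bookkeeping'' is exactly where the real work lies, and your shortcuts do not survive scrutiny. In step (I) you claim the analysis is ``parallel to Lemma~\ref{lem:triple_not2good}''. It is not: that lemma works because $J$ is \emph{exposed}, so for every idle maximal spherical $L\supseteq J$ one has $\E^1_{L,I}=D_L$ and the sets $\E_{L,I}$ are insensitive to which side of $J$ they face. Here the components are assumed non-exposed, so for $L\supseteq\{s,t\}$ with $|L_1|\geq 3$ the set $\E_{L,I}$ is a genuine single side, and when $I$ (meeting a big component) is rotated, $C_I$ moves and $\E_{L,I}$ may change; Remark~\ref{rem:L_isubsetI} does not apply to such pairs because the rotated $I$ does not contain $s$ or $t$. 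Determining how $\E_{L,I}$ transforms is precisely the identity $(*)$ in the paper's proof, whose verification in the rotated case requires the $m_{st}=4,5$ computations (e.g.\ that $w_{st}$ preserves the halfspaces of $ts\mathcal W_u$ because $tstst$ and $tsust$ commute, resp.\ the $stst$-computation) together with the sector analysis of Figure~2; none of this is supplied or replaced by your argument, so the claimed strict drop of $\mathcal K_2$ is unsubstantiated. Your choice to rotate the big components ``all the way'' into $\Lambda_0$ rather than to the sector prescribed by the folding also needs justification, since the non-increase of the remaining $\mathcal K_2$-terms is obtained in the paper by matching the twist with a folding and invoking Lemma~\ref{lem:folding}.

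Step (II) contains an outright error: if the wrong-side union $B$ consists of small components, then every maximal spherical $L$ meeting $B$ contains $\{s,t\}$ (by FC), so $C_L$ is the vertex set of a cell stabilised by $\langle s,t\rangle$ and is \emph{unchanged} by the twist by $w_{st}$; such cells do not sit in a ``$w_{st}V$-tripod'', and $\mathcal K_1$ does not strictly decrease -- it is preserved in all cases, odd or even. This is exactly why $\mathcal K_2$ exists, and the burden again falls on the $\E_{L,I}$-bookkeeping: the decisive pair is a twisted $L$ against an idle $I$ with $I_1\neq\{s,t\}$ (plus the special case $I_1=\{s,t\}$, where $\E^1_{I,L}=D_I$), and establishing $(*)$ for these pairs cannot be borrowed from ``the last part of the proof of Lemma~\ref{lem:mst=3}'', which uses facts special to $m_{st}=3$ (there $w_{st}$ is a reflection and $sts$, $sus$ commute). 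Note also that the paper handles your steps (I) and (II) with a single composite twist, using that in the presence of a non-self-compatible big component all $w_{B_i}$ coincide and there are no idle subsets avoiding $\{s,t\}$, which is what guarantees $\mathcal K_1(\tau(S))=\mathcal K_1(S)$ before the $\mathcal K_2$-comparison; if you insist on separating the two steps you must re-verify this for each twist. As it stands, the proposal identifies the right moves but leaves the central estimates -- the transformation rule for the sets $\E_{L,I}$ and the resulting strict decrease of $\mathcal K_2$ -- unproved.
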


\begin{proof}
Otherwise, let $\{s,t\}\subset S$ be peripheral and let $V$ be as in Definition~\ref{def:peripheral}. By Proposition~\ref{lem:exposed}, Corollary~\ref{cor:small_self}, and Proposition~\ref{prop:bigcompatible}, we can assume that none of the components of $S\setminus(\{s,t\}\cup\{s,t\}^\perp)$ are exposed, that all small components are self-compatible, and that big components are compatible.
Thus it remains to prove that each small component is compatible with any other component and that all big components are self-compatible.
Divide the components into two families $\{A_i\}$ and~$\{B_i\}$ such that all $\Phi_s^{t,A_i}$ coincide and are distinct from all~$\Phi_s^{t,B_i}$, which also coincide. Let $A$ (resp.\ $B$) be the union of all $A_i$ (resp.\ $B_i$) and suppose that all big components are in $B$. Denote $\Phi_s^{t,A}=\Phi_s^{t,A_i}, \Phi_s^{t,B}=\Phi_s^{t,B_i}$.
If there are self-compatible big components, then this implies $V\subset\Phi_s^{t,B}$. If there are no self-compatible big components, then, after possibly switching~$V$, we can also assume $V\subset\Phi_s^{t,B}$.

Let $w_{st}$ be the longest word in $\langle s,t\rangle$ and for each $A_i$ let $\tau_{A_i}$ be the elementary twist that sends each element $a\in A_i$ to $w_{st}aw_{st}^{-1}$, and fixes the other elements of $S$. For a big component $B_i$ that is not self-compatible, we define $w_{B_i},\tau_{B_i}$ as in the fourth and fifth paragraph of the proof of Proposition~\ref{prop:bigcompatible}. Let $\tau$ be the composition of all these $\tau_{A_i}$ and $\tau_{B_i}$. Let $L\subset S$ be a maximal spherical subset. $L$ is \emph{twisted} if it contains an element of $A$. In that case $s,t\in L$. $L$ is \emph{rotated} if it contains an element of $B_i$ that is not self-compatible. If $L$ is neither twisted nor rotated, we call it \emph{idle}.

Note that if we have rotated subsets, then by Proposition~\ref{prop:bigcompatible} we have no idle subsets not containing $\{s,t\}$, and that all $w_{B_i}$ coincide. Consequently $\mathcal K_1(\tau(S))=\mathcal K_1(S)$. We will now prove $\mathcal K_2(\tau(S))< \mathcal K_2(S)$.

Consider maximal spherical subsets $L,I\subset S$.
If both $L,I$ are twisted, both are rotated, or both are idle, by Remark~\ref{rem:L_isubsetI}(ii) we have
$d(\E_{L_\tau,I_\tau},\E_{I_\tau,L_\tau})=d(\E_{L,I},\E_{I,L})$.
Suppose for a moment that $L$ is twisted and $I$ is rotated or idle. If $I\subseteq \{s,t\}\cup \{s,t\}^\perp$, the same equality holds, so we can assume $I\not\subseteq \{s,t\}\cup \{s,t\}^\perp$.
We then have $\E_{L,I}\subset \Phi^{t,A}_{s}$, word for word as in the proof of the claim in Lemma~\ref{lem:mst=3}, and so $\E_{L,I}\subset w_{st}V$. Analogously, if $L$ is idle and contains $\{s,t\}$, and $I$ is rotated or twisted, we have $\E_{L,I}\subset V$, except in the `special' case where $L\subseteq\{s,t\}\cup \{s,t\}^\perp$ and so, say, $L_1=\{s,t\}$ is exposed and $\E^1_{L,I}=D_L$. Furthermore, for $L$ idle not containing $\{s,t\}$ we have $C_L\subset sV\cup V\cup tV$, and for $L\subset B_i$ rotated we have $C_L\subset w_{B_i}V$. This accounts for all possible positions of $\E_{L,I}$. We now need to analyse the effect of $\tau$ on all $\E_{L,I}$. Let $f$ be the folding from the proof of Proposition~\ref{prop:bigcompatible}. We will prove that except in the `special' case where $L_1=\{s,t\}$, we have
\begin{equation}
\E_{L_\tau,I_\tau}=\tilde f (\E_{L,I}).\tag{$*$}
\end{equation}

\textbf{Case 1. $I$ is twisted or idle containing $\{s,t\}$.} Then ($*$) follows from Remark~\ref{rem:L_isubsetI}.(iii).

\textbf{Case 2. $I$ is rotated or idle not containing $\{s,t\}$.} In that case $L$ contains $\{s,t\}$. We choose $u\in L_1$ as in the proof of the claim in Lemma~\ref{lem:mst=3} (possibly interchanging $s$ with $t$). Suppose first that $I$ is idle not containing $\{s,t\}$, and so $L$ is twisted. Then ($*$) amounts to $\E_{L_\tau,I_\tau}=w_{st}\E_{L,I}$. Let $r\in I\setminus (\{s,t\}\cup \{s,t\}^\perp)$. We have $m_{st}=4$ or $5$. If $m_{st}=5$,  then each of the halfspaces of $ts\mathcal W_u$ is preserved by $w_{st}$, since the reflections $tstst$ and $tsust$ commute, and so $w_{st}\Phi(ts\mathcal W_u,\mathcal W_r)=\Phi(w_{st}ts\mathcal W_u,\mathcal W_r)$. This implies $\E_{L_\tau,I_\tau}=w_{st}\E_{L,I}$ as in Case~2 of the proof of Lemma~\ref{lem:mst=3}. If $m_{st}=4$, we have $(stst)ts\mathcal W_u=st\mathcal W_u=s\mathcal W_u$. Thus $w_{st}$ exchanges the halfspaces of $ts\mathcal W_u$ and $s\mathcal W_u$, and in fact acts on them as $t$ does, so in particular $\Phi(w_{st}s\mathcal W_u,\mathcal W_r)=\Phi(ts\mathcal W_u,\mathcal W_r)$. Since $\{u,s\}\subset L_1$ is good with respect to $r$, and $S$ is $3$-rigid, by Proposition~\ref{prop:compactible1} we have $\Phi(ts\mathcal W_u,\mathcal W_r)=w_{st}\Phi(s\mathcal W_u,\mathcal W_r)$, and ($*$) follows.

It remains to consider the case where $I$ is rotated. Let $r\in I$ and suppose first $m_{st}=4$. Let $K=\Phi(s\mathcal W_u,\mathcal W_r)\cap t\Phi(s\mathcal W_u,\mathcal W_r)$, which contains $\mathcal W_r$ as in the preceding paragraph. Since the pair $s\Phi(s\mathcal W_t, \mathcal W_r),t\Phi(t\mathcal W_s, \mathcal W_r)$ is not geometric, $\mathcal W_r$ may lie only in two sectors for $\{u,s,t\}$, indicated in Figure~2, left. Denoting by $\Sigma$ the union of the interiors of these two sectors, we have that $\langle s,t\rangle\Sigma$ lies entirely in $K$. This implies ($*$) for $L$ idle since $\Sigma$ and its image $\tilde f(\Sigma)$ under the folding lie in the same halfspace of $s\mathcal W_u$. It also implies ($*$) for $L$ twisted, since $\Phi(ts\mathcal W_u,\tilde f(\Sigma))=w_{st}\Phi(s\mathcal W_u,\Sigma)$. The case $m_{st}=5$ is similar: though the union $\Sigma$ of possible sectors containing $\mathcal W_r$ is larger (see Figure~2, right), its image $\tilde f(\Sigma)$ under the folding, in both possible cases for $V$, still lies entirely in one halfspace of $ts\mathcal W_u$, which is $w_{st}$ invariant.

\begin{figure}
\begin{center}
\includegraphics[width=0.9\textwidth]{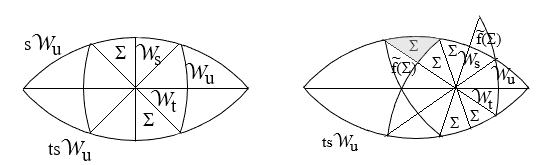}
\end{center}
\caption{On the right both possible positions of $\tilde f(\Sigma)$ for the shaded sector of $\Sigma$}
\end{figure}

This ends the proof of ($*$) as long as $L_1\neq\{s,t\}$. Then by Lemma~\ref{lem:folding}, as long as $L_1,I_1\neq\{s,t\},$ we have $d(\E_{L_\tau,I_\tau},\E_{I_\tau,L_\tau})\leq d(E_{L,I},E_{I,L})$, with strict inequality if $L,I$ are not both idle, both twisted or both rotated. It remains to consider the case where $I$ is idle with $I_1=\{s,t\}$. Recall that then $E^1_{I,L}=D_{I}=E^1_{I_\tau,L_\tau}$ and by Remark~\ref{rem:L_isubsetI}(ii) we have $E^i_{I,L}=E^i_{I_\tau,L_\tau}$ for $i>1$, and so in particular $E_{I_\tau,L_\tau}=w_{st}E_{I,L}$. Consequently, if~$L$ is twisted, by ($*$) we have $d(\E_{L_\tau,I_\tau},\E_{I_\tau,L_\tau})=d(E_{L,I},E_{I,L})$. If~$L$ is rotated, and chambers $x\in E_{L,I}, y\in E_{I,L}$ realise the distance $d(E_{L,I},E_{I,L})$, then $\tilde f (y)\in E_{I_\tau,L_\tau}$, and so by ($*$) and Lemma~\ref{lem:folding} we have $d(\E_{L_\tau,I_\tau},\E_{I_\tau,L_\tau})\leq d(\tilde f(x),\tilde f (y))<d(E_{L,I},E_{I,L})$.

To summarise, if there is a big component that is not self-compatible, then there is maximal spherical~$L$ that is rotated and maximal spherical~$I$ that contains $\{s,t\}$, hence not rotated. If all big components are self-compatible, and there is a small component incompatible with another component, then one of them is twisted and another is idle, so there is maximal spherical $L$ that is twisted  and maximal spherical $I$ that is idle with $I_1\neq \{s,t\}$. In both situations we obtain $\mathcal K_2(S)<\mathcal K_2(\tau(S))$, which is a contradiction.
\end{proof}

\section{Making use of consistent doubles}
\label{sec:consistent}

In this section we prove Theorem~\ref{thm:minimal complexity}, which as pointed out in Section~\ref{sec:proof} implies the Main Theorem.

\begin{lem}
\label{lem:double_side}
Suppose that $S$ has consistent doubles.
Let $L\subset S$ be irreducible spherical and let $r\in S$ with $L\cup \{r\}$ not spherical. Consider non-commuting $s,t\in L$ with $\{s,t,r\}$ not spherical.
Then $\Delta^{(s,t),r}$ does not depend on $s,t$, and we can denote it $\Delta^{L,r}$

Moreover, for $L$ exposed, for $C_L$ the vertex set of any cell of $\Da$ fixed by $L$, and for any chamber $x$ incident to $\mathcal W_r$, we have $d(C_L\cap \Delta^{L,r},x)<d(C_L\cap w_L\Delta^{L,r}, x)$.
\end{lem}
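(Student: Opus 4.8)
The plan is to prove the two assertions in order, drawing on the machinery developed in Sections~\ref{sec:independent}--\ref{sec:independent2}.

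\textbf{Independence of $s,t$.} First I would reduce the independence statement to Proposition~\ref{prop:consistent}. The obstacle is that Proposition~\ref{prop:consistent} is phrased in terms of \emph{good} pairs, whereas here we only assume that $S$ has consistent doubles and that $\{s,t,r\}$ is not spherical. So the first step is: given two non-commuting pairs $\{s,t\},\{s',t'\}\subset L$ with $\{s,t,r\}$ and $\{s',t',r\}$ both non-spherical, I want to show $\Delta^{(s,t),r}=\Delta^{(s',t'),r}$. If $\{s,t\}$ happens to be good with respect to $r$, then by Remark~\ref{rem:Emu}(ii) (after possibly interchanging $s$ and $t$) we have $\Delta^{(s,t),r}=\Delta^{\{s,t\},r}$, and Proposition~\ref{prop:consistent} finishes the comparison with any other good pair. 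The remaining case is that $\{s,t\}$ is \emph{not} good with respect to $r$: then by Lemma~\ref{lem:stnotgood} either $s$ or $t$, say $s$, is not good; combining with Lemma~\ref{lem:exposed_criteria} (or arguing directly) one concludes $r$ lies in a component of $S\setminus(\{s,t\}\cup\{s,t\}^\perp)$ with no element adjacent to $s$ or $t$, so in particular $\mu=((s,\emptyset),r),\ \mu'=((t,\emptyset),r)$ are markings. Then $((s,\emptyset),r)\sim((s,t),r)$ and $((t,\emptyset),r)\sim((t,s),r)$ by move~M2, and the consistency of the double $\{s,t\}$ gives $\Phi_s^{(s,t),r}$ and $\Phi_t^{(t,s),r}$ a geometric pair; by definition of $\Delta$ this forces $\Delta^{(s,t),r}=\Delta^{(t,s),r}$, and both equal the fundamental domain for $L$ on the side of $\mathcal W_r$. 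Passing through such `$\emptyset$-markings' lets one connect any two non-commuting pairs inside $L$ (using that $L$ is connected in its Coxeter--Dynkin diagram and, for any intermediate non-commuting pair, the walls $\mathcal W_r$ stay on one side by Lemma~\ref{lem:spherical same side}), yielding the well-defined $\Delta^{L,r}$.

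\textbf{The distance inequality for exposed $L$.} Now assume $L$ is exposed, so $|L|\le 3$. I would first dispose of $|L|=2$, say $L=\{s,t\}$. Pick a chamber $x$ incident to $\mathcal W_r$. Since $\{s,t\}$ is consistent, the pair $\Phi_s^{(s,t),r},\Phi_t^{(t,s),r}$ is geometric; write $V=\Delta^{L,r}$ and $V'=w_LV$ for the two geometric fundamental domains. The point is that $\mathcal W_r$ is disjoint from $\mathcal W_s$ and $\mathcal W_t$ and lies strictly on the $V$-side of the sector decomposition for $\{s,t\}$ — more precisely, by Remark~\ref{rem:self-comp} applied to the self-compatible component containing $r$ (exposedness of $L=\{s,t\}$ is what makes $\{s,t,r\}$ non-spherical together with the hypothesis, giving a big self-compatible component there) we have $\mathcal W_r\subset sV\cup V\cup tV$. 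Any gallery from a chamber of $C_L\cap V'$ to $x$ must therefore cross at least one of $\mathcal W_s,\mathcal W_t$ (to leave the sector containing $V'$) and then reach the sector containing $\mathcal W_r$, whereas a gallery from $C_L\cap V$ to $x$ can stay inside $sV\cup V\cup tV$; quantifying this crossing count gives the strict inequality. For $|L|=3$ the argument is the same in spirit but I must be careful: exposedness now says at least two elements of $L$, say $s,t$, are not adjacent to any element of $S\setminus(L\cup L^\perp)$, hence $r\in S\setminus(L\cup L^\perp)$ is not adjacent to $s$ or $t$; I would then work with the projection to the `non-leaf' coordinate and the halfspace $\Phi(\mathcal W_{s'},w\mathcal W_r)$ from Remark~\ref{def:alternateE}, reducing the distance comparison to the rank-two picture inside the relevant residue, where $\mathcal W_r$ lies strictly on the $\Delta^{L,r}$-side, and conclude by the same wall-counting argument (using that $C_L$ and $w_LC_L$ are separated by a wall fixed by an element of $\langle L\rangle$ that $\mathcal W_r$ does not cross, so every gallery from the wrong side to $x$ is one step longer).

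\textbf{Main obstacle.} I expect the main difficulty to be the second assertion in the $|L|=3$ case: making the phrase `$\mathcal W_r$ lies strictly on the $\Delta^{L,r}$-side' precise enough to extract a genuine $\ge 1$ gap in gallery distance, rather than just $\le$. The clean way is to exhibit a single wall $\mathcal W$ fixed by a reflection in $\langle L\rangle$ with $C_L\cap\Delta^{L,r}$ and $x$ both on one side of $\mathcal W$ while $C_L\cap w_L\Delta^{L,r}$ is on the other side and not incident to $\mathcal W$; such a $\mathcal W$ exists because $C_L$ is the vertex set of a cell fixed by $L$ and $w_L$ acts on it by the longest element, so $\Delta^{L,r}$ and $w_L\Delta^{L,r}$ are separated by (the restriction to $C_L$ of) any wall of $\langle L\rangle$, and we can choose the one whose associated halfspace contains $w\mathcal W_r$, using Definition~\ref{def:Delta} and the disjointness $w\mathcal W_m\cap\mathcal W_s=\emptyset$. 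Once that wall is pinned down, the inequality $d(C_L\cap\Delta^{L,r},x)+1\le d(C_L\cap w_L\Delta^{L,r},x)$ is immediate from the fact that a geodesic gallery crosses each wall at most once.
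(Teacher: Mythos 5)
Your treatment of the first assertion has a genuine gap exactly where the content of the lemma lies. The good-pair case via Proposition~\ref{prop:consistent} is fine, but the non-good case is mishandled: Lemma~\ref{lem:stnotgood} does not say that a non-good pair contains a non-good element (its hypothesis is that \emph{both} singletons are not good), and the conclusion you extract from it -- that $r$ lies in a component with no element adjacent to $s$ or $t$ -- is false in general, since $\{s,t,r\}$ non-spherical allows $r$ to be adjacent to one of $s,t$ (and if $r$ is adjacent to both, the pair is automatically not good by FC while the markings $((s,\emptyset),r)$, $((t,\emptyset),r)$ do not even exist). Moreover, even when $r$ is adjacent to neither, $((s,\emptyset),r)\sim((s,t),r)$ is not a move M2: M2 deletes the letter $t$ only when the marker $r$ is adjacent to $t$, which is precisely what you excluded. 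Most importantly, the real difficulty is bridging $\Delta^{(s,t),r}$ across \emph{different} non-commuting pairs in $L$, in particular across an edge $\{x,y\}$ of the Coxeter--Dynkin diagram with $r$ adjacent to both $x$ and $y$, where neither the pair marking $((x,y),r)$ nor any $\emptyset$-marking is available; since the diagram is a tree, you cannot route around such an edge. The paper's proof reduces this to Lemma~\ref{lem:Emu} (consistency of doubles gives $\Delta^{(s,t),r}=\Delta^{(s,L),r}$, since both markings have core $s$ and support containing $\{s,t\}$, and Lemma~\ref{lem:Emu} -- whose hard $F_4$ case is the root computation of Lemma~\ref{lem:24cell} -- makes $\Delta^{(s,L),r}$ independent of the non-leaf core). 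Your sketch never invokes Lemma~\ref{lem:Emu} nor supplies a substitute, so ``passing through $\emptyset$-markings lets one connect any two non-commuting pairs'' is an assertion, not an argument. (Incidentally, consistent doubles directly make $\Phi_s^{(s,t),r},\Phi_t^{(t,s),r}$ a geometric pair for every admissible pair, so $\Delta^{(s,t),r}=\Delta^{(t,s),r}$ requires no goodness discussion at all; your case split is aimed at the easy half of the statement.)

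For the second assertion, the $|L|=2$ case matches the paper (Remark~\ref{rem:self-comp} gives $\mathcal W_r\subset sV\cup V\cup tV$, and the quantification you wave at is exactly what Lemma~\ref{lem:folding} supplies). But your ``clean way'' for $|L|=3$ does not work as stated: exhibiting a single wall $\mathcal W$ fixed by a reflection of $\langle L\rangle$ that separates $x$ from $C_L\cap w_L\Delta^{L,r}$ but not from $C_L\cap \Delta^{L,r}$ does not imply $d(C_L\cap\Delta^{L,r},x)+1\le d(C_L\cap w_L\Delta^{L,r},x)$; these are distances from two different sets of chambers, and one extra separating wall yields no such comparison (note also $w_LC_L=C_L$, and chambers of $C_L\cap\Delta^{L,r}$ and $C_L\cap w_L\Delta^{L,r}$ are in general separated by many walls \emph{not} in $\mathcal W_L$, so ``a geodesic crosses each wall at most once'' does not localize the count to $\mathcal W_L$ without, say, a gate/projection argument for the residue). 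The paper's argument is genuinely quantitative: it shows $\mathcal W_r$ lies on the $\Delta^{L,r}$-side of the five walls $t\mathcal W_s, t\mathcal W_p, s\mathcal W_t, p\mathcal W_t, ps\mathcal W_t$, hence in a sector separated from $V$ by at most two walls of $\mathcal W_L$, while at least six walls of $\mathcal W_L$ separate $\Delta^{L,r}$ from $w_L\Delta^{L,r}$; it is this surplus of at least four walls, not a single wall, that forces the strict inequality. You would need to reproduce such a count (or the folding argument) to close this case.
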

\begin{proof} To start we focus on the first assertion. Since doubles are consistent, by Remark~\ref{rem:moves_delta} we have $\Delta^{(s,t),r}=\Delta^{(s,L),r}$. Consider first the case where $|L|\geq 3$. Then by Lemma~\ref{lem:Emu} we have that $\Delta^{(s,L),r}$ does not depend on $s$ as long as $s$ is not a leaf of the Coxeter--Dynkin diagram of $L$. However, if $s$ is a leaf and $t$ is not a leaf, since the doubles are consistent observe that the pair $\Phi_s^{(s,t),r}, \Phi_t^{(t,s),r}$ is geometric. This implies $\Delta^{(s,t),r}=\Delta^{(t,s),r}$ and the assertion follows. In the case where $|L|=2$ it is enough to invoke that last observation.

For the second assertion, assume first that we have $|L|=2$ and that $V$ is the sector for $L$ containing $\Delta:=\Delta^{L,r}$. Then by Remark~\ref{rem:self-comp} we have $\mathcal W_r\subset sV\cup V\cup tV$. The required inequality follows then from e.g.\ Lemma~\ref{lem:folding} applied to one of the two foldings from the proof of Proposition~\ref{prop:bigcompatible}. If $|L|=3$, suppose that $s,t,p$ are consecutive vertices in the Coxeter--Dynkin diagram of $L$. Since $$\mathcal W_r\subset \Phi(t\mathcal W_s,\Delta)\cap \Phi(t\mathcal W_p,\Delta)\cap \Phi(s\mathcal W_t,\Delta)\cap \Phi(p\mathcal W_t,\Delta)\cap\Phi(ps\mathcal W_t,\Delta),$$ we have that $\mathcal W_r$ is contained in a sector for $L$ separated by at most two walls in $\mathcal W_L$ from $V$. Since $\mathcal W_L$ consists of at least $6$ walls separating $\Delta$ from $w_L\Delta$, the inequality follows.
\end{proof}

\begin{proof}[Proof of Theorem~\ref{thm:minimal complexity}]
By Proposition~\ref{prop:smallcompatible}, $S$ has consistent doubles.
By Corollary~\ref{cor:geometric criterion}, to prove Theorem~\ref{thm:minimal complexity} it suffices to show that for any simple markings $\mu$ and $\mu'$ with common core $s\in S$, we have $\Phi^{\mu}_s=\Phi^{\mu'}_s$. For each component $A$ of $S\setminus(s\cup s^{\perp})$, by Remark~\ref{rem:find markings} there exists a simple marking $\mu$ with core $s$ such that $K^\mu_s\subseteq A$ (where $K^\mu_s$ is as in Definition~\ref{def:K}). We now repeat the construction of halfspaces associated to components from
Definition~\ref{def:K}, with $\{s,t\}$ replaced by~$s$. Namely, since $S$ has consistent doubles, by Proposition~\ref{prop:same side2}, if $K_s^{\mu'}\subseteq A$, then $\Phi^{\mu}_s=\Phi^{\mu'}_s$. Thus each component $A$ of $S\setminus(s\cup s^{\perp})$ determines a halfspace $\Phi_s^{A}:=\Phi^{\mu}_s$ for $s$. Two components $A_1,A_2$ are \emph{compatible} if $\Phi_s^{A_1}=\Phi_s^{A_2}$. We will show that all components of $S\setminus(s\cup s^{\perp})$ are compatible.

Let $A$ be a component of $S\setminus(s\cup s^{\perp})$ and let $L\subset S$ be maximal spherical intersecting $A$. Note that if $s\notin L$, then there is $m\in L$ not adjacent to $s$ and hence using the marking $((s,\emptyset),m)$ we observe that
$C_{L}\subset \Phi_s^A$. Suppose now that $L$ contains $s$, and let $L_1\subset L$ be maximal irreducible containing $s$ and hence also containing some $t\in A$. Let $I\subset S$ be maximal spherical with some $r\in I\cap B$ for another component $B$ of $S\setminus(s\cup s^{\perp})$.
If $L_1$ is not exposed, then using the marking $\mu=((s,t),r)$, by the first assertion in Lemma~\ref{lem:double_side}, we have $\E_{L,I}\subseteq \Phi_s^A$. If $L_1$ is exposed, then by the second assertion in Lemma~\ref{lem:double_side}, for each
chamber~$y$ in $\E_{L,I}$ realising the distance to any fixed chamber of $\E_{I,L}$, we have $y\in \Phi_s^A$ as well.

If some components of $S\setminus(s\cup s^{\perp})$ are not compatible, let $A$ be the union of all components $A_i$ with one $\Phi_s^{A_i}$, and $B$ the union of components $B_i$ with the other $\Phi_s^{B_i}$. Let $\tau$ be the elementary twist that sends each element $b\in B$ to $sbs$, and fixes the other elements of~$S$.
Let $L,I\subset S$ be maximal spherical.
By the observation above on~$C_{L}$, we have $d(C_{L_\tau}, C_{I_\tau})\leq d(C_L, C_I)$ with strict inequality if and only if $s\notin L,I$ and $L\cap A_i,I\cap B_j\neq \emptyset$ or vice versa. Thus we can assume that such $L,I$ do not exist, and hence $\mathcal K_1(\tau(S))=\mathcal K_1(S)$ so that we can focus on~$\mathcal K_2$. Assume without loss of generality that all big components are among the $B_j$. Again from the above paragraph if $L\cap A_i,I\cap B_j\neq \emptyset$, then the chambers realising the distance between $\E_{L,I}$ and $\E_{I,L}$ lie in the opposite halfspaces for $s$. Futhermore, $\E_{I_\tau,L_\tau}=s\E_{I,L}$ by Remark~\ref{rem:L_isubsetI}(iii). 
Let $t\in L,r\in I$ be non-commuting with~$s$. Since $A_i$ is small, we have that $\{s,t\}$ is spherical. Since $\{s,t\}$ is self-compatible, by Remark~\ref{rem:self-comp} we have $\Phi(t\mathcal W_s,\mathcal W_r)=\Phi(t\mathcal W_s,s\mathcal W_r)$, and so $\E_{L_\tau,I_\tau}=\E_{L,I}$. Thus $d(\E_{L_\tau,I_\tau}, \E_{I_\tau,L_\tau})< d(\E_{L,I},\E_{I,L})$ and consequently, $\mathcal K_2(\tau(S))< \mathcal K_2(S)$, which is a contradiction.
\end{proof}

\begin{bibdiv}
\begin{biblist}

\bib{BMMN}{article}{
   author={Brady, Noel},
   author={McCammond, Jonathan P.},
   author={M\"{u}hlherr, Bernhard},
   author={Neumann, Walter D.},
   title={Rigidity of Coxeter groups and Artin groups},
   booktitle={Proceedings of the Conference on Geometric and Combinatorial
   Group Theory, Part I (Haifa, 2000)},
   journal={Geom. Dedicata},
   volume={94},
   date={2002},
   pages={91--109}}

\bib{CM}{article}{
   author={Caprace, Pierre-Emmanuel},
   author={M\"{u}hlherr, Bernhard},
   title={Reflection rigidity of 2-spherical Coxeter groups},
   journal={Proc. Lond. Math. Soc. (3)},
   volume={94},
   date={2007},
   number={2},
   pages={520--542}}

\bib{CP}{article}{
   author={Caprace, Pierre-Emmanuel},
   author={Przytycki, Piotr},
   title={Twist-rigid Coxeter groups},
   journal={Geom. Topol.},
   volume={14},
   date={2010},
   number={4},
   pages={2243--2275}}

\bib{Davis}{book}{
   author={Davis, Michael W.},
   title={The geometry and topology of Coxeter groups},
   series={London Mathematical Society Monographs Series},
   volume={32},
   publisher={Princeton University Press, Princeton, NJ},
   date={2008},
   pages={xvi+584}}

\bib{Deod}{article}{
   author={Deodhar, Vinay V.},
   title={On the root system of a Coxeter group},
   journal={Comm. Algebra},
   volume={10},
   date={1982},
   number={6},
   pages={611--630}}

\bib{Hee}{article}{
   author={H\'ee, Jean-Yves},
   title={Le c\^one imaginaire d'une base de racine sur $\R$},
   journal={Th\`ese d'\'etat, Universit\'e d'Orsay},
   date={1990}}

\bib{HM}{article}{
	title={Isomorphisms of Coxeter groups which do not preserve reflections},
	author={Howlett, R. B.},
    author={M{\"u}hlherr, Bernhard},
	journal={preprint},
	year={2004}}

\bib{HRT}{article}{
   author={Howlett, R. B.},
   author={Rowley, P. J.},
   author={Taylor, D. E.},
   title={On outer automorphism groups of Coxeter groups},
   journal={Manuscripta Math.},
   volume={93},
   date={1997},
   number={4},
   pages={499--513}}

\bib{HP}{article}{
   author={Huang, Jingyin},
   author={Przytycki, Piotr},
   title={A step towards Twist Conjecture},
   journal={Doc. Math.},
   volume={23},
   date={2018},
   pages={2081--2100}}

\bib{MM}{article}{
   author={Marquis, Timoth\'{e}e},
   author={M\"{u}hlherr, Bernhard},
   title={Angle-deformations in Coxeter groups},
   journal={Algebr. Geom. Topol.},
   volume={8},
   date={2008},
   number={4},
   pages={2175--2208}}

\bib{R}{book}{
   author={Ronan, Mark},
   title={Lectures on buildings},
   note={Updated and revised},
   publisher={University of Chicago Press, Chicago, IL},
   date={2009},
   pages={xiv+228}}

\bib{W}{article}{
   author={Weigel, Christian J.},
   title={The twist conjecture for Coxeter groups without small triangle
   subgroups},
   journal={Innov. Incidence Geom.},
   volume={12},
   date={2011},
   pages={30}}

\end{biblist}
\end{bibdiv}

\end{document}